\documentclass[12pt]{elsarticle}
\usepackage{amsmath}
\usepackage{amssymb}
\usepackage{amsthm}
\usepackage{verbatim}
\usepackage{subfigure}
\usepackage{latexsym}
\usepackage{lmodern}
\usepackage{array}
\usepackage{graphicx}
\usepackage{float}
\usepackage{multirow}
\usepackage{listings}
\usepackage{color}
\usepackage{bm}
\usepackage{url}
\usepackage[english]{babel}
\usepackage{hyperref}
\usepackage{algorithm}
\usepackage{algorithmic}
\usepackage{multicol,balance}
\usepackage{times}
\usepackage{cases}
\usepackage{titlesec}
\usepackage[title]{appendix}
\usepackage{mdwlist}
\usepackage{makecell}
\usepackage{booktabs}
\usepackage{mathabx}
\usepackage{epstopdf}
\usepackage{framed}
\usepackage{tikz}
\usepackage{dsfont}
\allowdisplaybreaks[4]

\newcommand{\R}{\mathbb{R}}

\newcommand*\xbar[1]{%
	\hbox{%
		\vbox{%
			\hrule height 0.5pt 
			\kern0.5ex
			\hbox{%
				\kern-0.1em
				\ensuremath{#1}%
				\kern-0.1em
			}%
		}%
	}%
}

\newtheorem{lemma}{Lemma}[section]

\newtheorem{prop}{Proposition}[section]
\newtheorem{cor}{Corollary}[section]
\newtheorem{theorem}{Theorem}[section]
\newtheorem{remark}{Remark}[section]

\DeclareMathOperator{\supp}{supp}

\numberwithin{equation}{section}

\normalsize \makeatletter

\newcommand*\bbar[1]{%
	\hbox{%
		\vbox{%
			\hrule height 0.5pt 
			\kern0.365ex
			\hbox{%
				\kern-0.1em
				\ensuremath{#1}%
				\kern-0.1em
			}%
		}%
	}%
} 

\begin{document}

\begin{frontmatter}
\title
{
	Global well-posedness and decay rates for the three dimensional incompressible active liquid crystals
}


\author[rvt1]{Fan Yang\corref{corresponding}}
\ead{fanyang-m@ntu.edu.cn}

\author[rvt2]{Xiongfeng Yang}
\ead{xf-yang@sjtu.edu.cn}

\cortext[corresponding]{Corresponding author.}

\address[rvt1]{School of Mathematics and Statistics, Nantong University, Nantong, 226019, China}

\address[rvt2]{School of Mathematical Sciences, MOE-LSC and CMA-Shanghai,\\ Shanghai Jiao Tong University, Shanghai, 200240, China}

\begin{abstract}

This paper investigates the global well-posedness and large-time behavior of 3D incompressible active liquid crystals under constant activity, modeled by a coupled system of forced incompressible Navier-Stokes equations for the velocity and a parabolic system for the $Q$-tensor order parameter.
By employing refined commutator estimates, the existence and uniqueness of global strong solutions are proved for small initial data $(Q_0,u_0)\in H^{s+1}\times H^s$ $(s\geq 2)$ with  activity $c>c_\star$, which improves a previous result in \cite{active-limit}.
In addition, if the initial data further belong to $L^1$ and $s\geq 4$, we
obtain a mixing decay estimate on $\|\partial^kQ(t)\|_{L^2}$ that combines both
an extra exponential decay factor at a rate proportional to $(c-c_\star)\Gamma$ and the optimal algebraic decay rate that coincides with that of the heat kernel, where $k\leq s-1$. This result reveals that, in the high activity regime, active nematics become isotropic with an activity-dependent exponential convergence rate, and the estimate is stable in the infinite rotational viscosity limit, as $\Gamma\rightarrow 0$. Meanwhile, the sharp decay estimate on $\|\partial^ku(t)\|_{L^2}$ is also derived for $k\leq s-2$ with an additional initial assumption.
The proof is established via a combination of the Green's function method and the time-weighted energy method. To the best of our knowledge, these results are the first reported for active/passive nematic liquid crystals within the Beris-Edwards framework, and the enhanced decay effect of the orientational field is essentially derived from the free energy. Furthermore, in the passive setting, our result implies the phase transition of thermotropic liquid crystals at high temperatures.



\end{abstract}
\begin{keyword}
	active nematic liquid crystals\sep incompressible flows\sep global well-posedness\sep Green's function\sep decay estimates\sep exponential decay
	\MSC[2020] 35A01\sep 35K55\sep 35B40\sep 35Q35\sep 76Z05\sep 82D03
\end{keyword}

\end{frontmatter}
%

%

\section{Introduction}\label{sec.1}
Active matter describes ubiquitous nonequilibrium condensed systems whose constituent elements are capable of converting energy into motion,
and these individual components are commonly known as active or self-propelled particles.
Examples include microtubule bundles \cite{exp-microtubules}, bacteria \cite{exp-bacteral}, actin filaments \cite{exp-actin}, cytoskeletal filaments \cite{exp-cyto} and schools of fish and flocks of birds \cite{exp-fish}, which exist both at the microscopic and macroscopic scale.
The unifying characteristic of such systems is that active particles interacting with each other and their surrounding medium generate collective behaviors, such as nonequilibrium order-disorder transitions and active turbulence.
In addition, many cutting-edge technologies arise along with the rapid development of active systems in the theoretical physics community; for example, active liquid crystals have succeeded in designing autonomous materials systems that can perform logic operations \cite{defect-active}.
For more information, we refer interested readers to \cite{blow2014,active-review,Giomi2011,Giomi2012,Giomi2013,Giomi2014,activematter,Ravnik2013} and the references therein.

\subsection{The PDE system}
Active liquid crystals represent a particular class of active systems, in which active particles typically possess elongated shapes, such as cytoskeletal filaments and a microtubule-kinesin film.
To date, there exist several theoretical and experimental model systems that proposed for studying the collective behaviour of active liquid crystals.
Note that such systems can be classified as polar or nematic according to the head-tail symmetry of their constituent elements (Besides, another criterion is momentum conservation).
Loosely speaking, the polar particles (e.g., bacteria, fish, birds) have distinct heads and tails, are generally self-propelled along their long axis.
They can order in a polar state or a nematic state.
A well-known system for active polar fluids is the Toner-Tu model \cite{TT-model}, where the polar order is described by a vector field $\bm{p}$; see \cite{TT-math1,TT-math2} for mathematical results. On the other hand, the head-tail symmetric particles (e.g., melanocytes) are usually referred to as "apolar", and can order in a nematic state. In addition, self-propelled rod-like entities, which are head-tail symmetric but behave like polar particles, can still form a nematic order and display exclusively apolar interactions.
In what follows, we will refer to the active liquid crystal that exhibits nematic orders as an {\em active nematic system}.
Though numerous theoretical and experimental studies have been developed for active nematics, a comprehensive understanding of their effectiveness and mechanisms is still lacking, especially from the viewpoint of mathematical analysis.

In this study, we are interested in the hydrodynamic equations for 3D incompressible flow of active nematic liquid crystals proposed in \cite{Giomi2012}, where the system is typically described by three fields. That is
\begin{equation}\label{eqiacl}
	\begin{cases}
		\partial_t c + u\cdot\nabla c = \nabla\cdot \left[ \left( D_0 \mathbb{I}_3+ D_1Q \right)\nabla c + \alpha_1 c^2\nabla\cdot Q  \right],\\
		\partial_t Q + u\cdot\nabla Q  = S(\nabla u,Q) +\lambda |Q|D+\Gamma H[Q,c],\\
		\partial_t u + u\cdot\nabla u = \mu\Delta u-\nabla p + \nabla\cdot \left( \tau+\sigma \right),\\
		\nabla\cdot u = 0, \hspace{4.4cm} x\in \R^3,\quad t>0,
	\end{cases}
\end{equation}
where $c >0$ is a scalar function represents the concentration of active particles, $u \in \R^3$ is the flow velocity, and $p$ is the scalar pressure. In addition, $Q\in \mathbb{M}^{3\times 3}$ denotes the nematic tensor order parameter, which describes the primary and secondary directions of nematic alignment along with variations in the degree of nematic order \cite{Q-tensor-book}. As usual, we shall call it "$Q$-tensor". Meanwhile, since $Q$ is a traceless symmetric tensor field, we are able to classify it according to its eigenvalues. Indeed, $Q=0$ corresponds to the isotropic phase, which has three equal zero eigenvalues.
Otherwise, the system is classified as uniaxial for two equal non-zero eigenvalues or biaxial for three distinct eigenvalues.
We also note that, for both uniaxial and biaxial cases, $Q$ can be uniquely represented by the director field $\bm{n}$ and the scalar order parameter $s$ which measures the degree of alignment.
Moreover, $D_0, D_1>0$ are the diffusion coefficients, $\mu>0$ represents the viscosity coefficient, $\Gamma^{-1}>0$ is the rotational viscosity, $\lambda\in\R$ denotes the nematic alignment parameter, $\alpha_1$ is a constant with dimensions of inverse time, and $\mathbb{I}_3$ denotes the $3\times 3$ identity matrix. 
The tensor $H[Q,c]$ represents the molecular interactions that stem from a free energy functional,
which can be written as
\[
\mathcal{F}[Q,c]=\int \left( \dfrac{K}{2}|\nabla Q|^2  + \dfrac{K}{4}(c-c_\star) {\rm Tr}(Q^2) - \dfrac{b}{3} {\rm Tr}(Q^3) + \dfrac{K}{4}c| {\rm Tr}(Q^2) |^2 \right) dA,
\]
where $K>0$ is the elastic constant, $c_\star$ is the critical concentration for the IN (isotropic-nematic) transition, and $b\in\R$ is a material-dependent constant. Note that $\mathcal{F}[Q,c]$ comprises the so-called Landau-de Gennes free energy functional for bulk nematics, augmented with appropriate couplings to $c$ (see also \cite{JFM18,Q-tensor-book}).
In three dimension, $H$ then reads
\[
H[Q,c]:= K\Delta Q -\dfrac{K}{2}(c-c_\star) Q  + b\left[ Q^2-\dfrac{{\rm Tr}(Q^2)}{3}\mathbb{I}_3\right]- Kc Q{\rm Tr}(Q^2),
\]
which obeys that
\[
H=-\frac{\delta \mathcal{F}}{\delta Q}+ \frac{\mathbb{I}_3}{3}{\rm Tr}\left(\frac{\delta \mathcal{F}}{\delta Q}\right).
\]

The term $S(\nabla u, Q)$
is defined by
\begin{equation}
	S(\nabla u, Q)=\xi D\left( Q+\dfrac{1}{3}\mathbb{I}_3 \right) + \xi \left( Q+\dfrac{1}{3}\mathbb{I}_3 \right)D - 2\xi \left( Q+\dfrac{1}{3}\mathbb{I}_3 \right){\rm Tr}(Q\nabla u) + \Omega Q- Q\Omega,\nonumber
\end{equation}
where $\xi\in\R$ is a constant that represents the ratio of tumbling and aligning effects, and $\Omega=\frac{1}{2}\left( \nabla u - \nabla u^{T} \right)$ and $D=\frac{1}{2}\left( \nabla u + \nabla u^{T} \right)$ are the vorticity and the strain-rate tensors, respectively. Note that the relative impact of strain-rate and vorticity on particle alignment with the flow is characterised by $\xi$.
In addition, $\xi=0$ is accounted for the corotational case where the molecules only tumble in a shear flow without aligning.
On the other hand, the symmetric additional stress tensor $\tau$
takes a form similar to that used in passive systems,
\begin{equation}
	\tau=-\xi \left( Q+\dfrac{1}{3}\mathbb{I}_3 \right)H - \xi H\left( Q+\dfrac{1}{3}\mathbb{I}_3 \right) + 2\xi \left( Q+\dfrac{1}{3}\mathbb{I}_3 \right){\rm Tr}(QH) - \nabla Q \odot\nabla Q,\nonumber
\end{equation}
where we denote by $\nabla Q \odot\nabla Q$ the $3\times 3$ matrix whose $(\alpha,\beta)$-entry is $\partial_\beta Q_{\gamma\delta}\partial_\alpha Q_{\gamma\delta}$, $1\leq\alpha,\beta\leq 3$.
In what follows, we will use a partial Einstein summation convention, i.e., the
repeated indices are summed over.

The stress tensor $\sigma$ can be written as a sum of two contributions: $\sigma=\sigma^r + \sigma^a$. The first is the contribution from nematic elasticity, similar to that in passive system,
\[
\sigma^r = -\lambda |Q|H+ QH-HQ.
\]
The second contribution stems from the interaction of active particles, which is typically defined by
\[
\sigma^a = \alpha_2 c^2Q.
\]
Observed that the $c^2$ dependence again arises because of the propulsion, similar to that in the first equation of \eqref{eqiacl}.
Moreover, this active term represents the contractile ($\alpha_2>0$) or extensile ($\alpha_2<0$) stress exerted by the active particles, see \cite{activeterm2,activeterm1}.

From an analytical perspective, the full system \eqref{eqiacl} presents considerable challenges in studying global weak solutions due to the difficulty in performing a priori estimate.
Hence, in the case of non-constant activity, the work in \cite{weak-ALC-comp} first studied the corresponding simplified model of compressible flow under additional parameter restrictions, where the estimate of $\|c\|_{L^\infty}$ is obtained by the maximum principle in order to reach the desired a priori estimate. Alternatively, as shown in \cite{active-limit}, one can construct strong solutions to system \eqref{eqiacl} with sufficiently regular data, where we still have the bound of $\|c\|_{L^\infty}$ by the standard embedding theorems.
However, these studies are far from what we expected, especially for weak solutions or lower regularity solutions which may serve important roles for understanding the complex behavior (e.g., collective motion, pattern formation and active turbulence) of active liquid crystals. Nevertheless, these problems will be further explored in our further studies.

In the rest of this paper, we still focus on the case of constant activity, i.e., $c>0$.
In this setting,
the well-studied simplified corotational model of active system \eqref{eqiacl} in dimension three reads (see \cite{weak-ALC,Giomi2011,Giomi2013})
\begin{equation}\label{eq1.1}
	\begin{cases}
		\partial_tQ+(u\cdot\nabla)Q + Q\Omega-\Omega Q-\lambda |Q|D=\Gamma H[Q];\\
		\partial_t u + (u\cdot\nabla)u +\nabla p-\mu \Delta u =-\nabla\cdot (\nabla Q \odot\nabla Q)+\nabla\cdot (Q\Delta Q-\Delta QQ)\\
		\hspace{5.3cm}-\lambda\nabla\cdot(|Q|H[Q])+\kappa\nabla\cdot Q;\\
		\nabla\cdot u = 0,
	\end{cases}
\end{equation}
where $(t,x)\in\R_+\times \R^3$, $\mu$, $\Gamma>0$, $\lambda$, $b\in \R$, and $a$, $\kappa$, $H[Q]$ obey that
\begin{equation}\label{eq1.2}
	a=\dfrac{1}{2}(c-c_\star),\quad \kappa=\alpha_2c^2
\quad {\rm and} \quad
H[Q]= \Delta Q-aQ+b\left[Q^2-\dfrac{{\rm Tr}(Q^2)}{3}\mathbb{I}_3\right]-cQ{\rm Tr}(Q^2).
\end{equation}
Here we have taken $K=1$ in the free energy without loss of generality. It is obvious that
the velocity field $u$ is modeled by the Navier-Stokes equation with an additional active force, and the evolution of $Q$-tensor is described by the Beris-Edwards framework.
Hence, loosely speaking, the system \eqref{eq1.1} can be viewed as the Beris-Edwards system by incorporating additional hydrodynamic stresses from activity.
Moreover, the above system is supplemented with the initial data
\begin{equation}
	(Q,u)(t,x)\mid_{t=0}=(Q_0,u_0)(x) \quad \text{\rm for}\quad x\in\R^3.
\end{equation}

It is important to emphasize that, even for the above simplified system, 
there still exists the contribution induced by the active stress $\kappa Q$.
In addition, the effect of this active stress on defect dynamics has been observed in \cite{Giomi2013}.
Based on this feature, the dynamics of topological defects under active stresses can be compared to the transport of electrons in the presence of a voltage gradient. Therefore, the system \eqref{eq1.1} has significant application value for performing logic operations and transmitting information within active materials \cite{defect-active}.

\subsection{A brief overview of related literature}

The Beris-Edwards model, which originated in the study of liquid crystals, is now successfully applied in modeling active nematics.
For the simplified system \eqref{eq1.1}, one can also see that it indeed has a similar structure as the well-studied Beris-Edwards system when the activity strength parameter $\kappa$ vanishes.
Therefore, we begin with a short review of some related results for the Beris-Edwards model in order to facilitate the comparison between active nematics and their passive counterparts.

First progress on well-posedness is due to the work of Paicu and Zarnescu \cite{Qtensor12}, in which they proved the existence of global weak solutions in $\R^d$ $(d=2,3)$ for the case $\xi=0$. Meanwhile, they also studied the existence of regular solutions as well as weak-strong uniqueness for dimension two. In \cite{Qtensor11}, they further generalized the mentioned results to the case where $\xi$ is sufficiently small. However, in two dimensional periodic domain, it was shown that such smallness assumption on $\xi$ is unnecessary for the existence of regular solutions \cite{2dwithoutxi}. Additionally, the weak-strong uniqueness result for the corotational and non-corotational settings was investigated in \cite{weak-Q-tensor} and \cite{weak-strong-LCD-xi}, respectively.
For a bounded domain $\mathcal{O}\subset\R^3$ with smooth boundary, Xiao \cite{Q-unique-3d} studied the existence of global strong solutions to the corotational system for small initial data, as well as weak-strong uniqueness.
His approach is based on the maximal regularities of Stokes and parabolic operators.
Later on, Hieber, Hussein and Wrona \cite{xi-all} addressed the existence of global strong solutions for all $\xi\in\R$, where their method is built on the associated quasi-linear system and its maximal regularity property.
Moreover, Wilkinson \cite{Q-tensor-Torus} considered the system with a singular potential introduced by Ball and Majumdar \cite{potential} under the periodic boundary conditions, and proved the existence of strictly physical global weak solutions for $\xi=0$. A nonisothermal variant of the Beris-Edwards model with the Ball-Majumdar potential was also discussed in \cite{nonisothermal1,nonisothermal2}, where the existence of global weak solutions was obtained for periodic boundary conditions.
Furthermore, Wang, Xu and Yu \cite{Q-tensor-comp} established the existence and long-time dynamics of globally defined weak solutions for the coupled compressible Navier-Stokes equations and $Q$-tensor system. For other results, especially concerning modified Beris-Edwards models with general free energy or stress tensor, we refer interested readers to \cite{ables2014,ables2016,2dlowSobolev,A-Z2016-2d,Du-wang-20,Hang-Ding-15,LC-decay-19,LC-decay-22} and references therein.
 
Concerning the active liquid crystals, a few studies exist that focus on the mathematical analysis of the related systems.
As was shown in \cite{weak-ALC}, Chen, Majumdar, Wang and Zhang first established the existence of global weak solutions for the simplified active system \eqref{eq1.1} in dimension two and three.
However, due to technical reasons, they were only able to obtain the existence of regular solutions as well as weak-strong uniqueness for dimension two.
Later on, the first author and Li \cite{weak-strong-3d} studied the weak-strong uniqueness for the Leray-Hopf type weak solutions of system \eqref{eq1.1} in $\R^3$, and a uniqueness criterion was found. Recently, a Koch-Tataru theorem for \eqref{eq1.1} was also presented in \cite{ALC-BMO} by the first author,
where the local well-posedness at critical regularity was established for small initial data $(Q_0,u_0)\in L^\infty\times {\rm BMO}^{-1}$.
Moreover, Lian and Zhang \cite{Lian-zhang20} derived global weak solutions for the inhomogeneous version of system \eqref{eq1.1} in smooth bounded domains.
For the compressible fluid model of active nematics, the existence of global weak solutions of initial-boundary value problems was studied in \cite{weak-ALC-comp}.
As for the full active nematic system \eqref{eqiacl}, Jiang, Tang and Wang \cite{active-limit} first investigated the strong well-posedness in $\mathbb{T}^d$ with $d=2,3$, along with the interesting zero activity limit problem.
For stochastic analysis and other related results, we refer the reader to \cite{active-servy,Jiang-com-s,stochastic-alc} and references therein.

\subsection{Motivation and main results}
As we have mentioned above, 
the 2D simplified system \eqref{eq1.1} admits higher regular solutions that satisfy
\[
	Q\in L^\infty\left(0,T;H^{s+1} \right)\cap L^2\left(0,T;H^{s+2} \right)\,\,\text{and}\,\,u\in L^\infty\left(0,T;H^s \right)\cap L^2\left(0,T;H^{s+1} \right),
\]
where the initial data $(Q_0,u_0)\in H^{s+1}\times H^s$ with $s>0$. 
The existence of such regular solutions in dimension three has been confirmed in a recent work \cite{active-limit}.
Although their existence results are established for the full active system \eqref{eqiacl} in $\mathbb{T}^d$ ($d=2,3$),
it is obvious that the same is true for system \eqref{eq1.1} which is a special case of \eqref{eqiacl}.
However, compared to that known for dimension two, we need more regular initial data in the 3D case to overcome the difficulties caused by the inability to use the logarithmic embedding inequality; see also \cite{Q-unique-3d}. Loosely speaking, similar results may hold in $\R^3$ for the initial data $(Q_0,u_0)\in H^{s+1}\times H^s$ with the integer $s> \frac{d}{2}+1$, but there exists a gap between the Sobolev index $s$. Recall that the essence of these results lies in obtaining the estimates on $\|Q(t,\cdot)\|_{W^{1,\infty}}$ and $\|u(t,\cdot)\|_{L^\infty}$, which allow us to perform the higher regularity estimates of the underlying approximation systems. In view of the standard Sobolev embeddings, we are aware that the integer $s\geq 2$ is enough to get the desired control in $L^\infty$ norm. Therefore, this motivates us to revise the global well-posedness of the 3D simplified system \eqref{eq1.1} in the $H^s$-framework in order to relax the initial requirements, which serves as the first contribution of this paper.

Partially inspired by \cite{active-limit}, we will utilize an approximation system of \eqref{eq1.1} and prove the local well-posedness by the energy method.
Technically, to achieve the key a priori estimate under a slightly lower regularity than that of \cite{active-limit}, the refined commutator estimates have been employed to deal with certain nonlinear terms. Consequently, our first main result regarding the existence and uniqueness of local-in-time solutions to system \eqref{eq1.1} is stated as follows:
\begin{theorem}\label{thm1.1}
	Let $s\geq 2$ be an integer, $\widetilde{C}$ is a positive constant defined as in \eqref{eq2.11} and $E_0:= \|Q_0\|_{H^{s+1}}^2 + \|u_0\|_{H^s}^2$ be the initial energy.
	Then there exists a time $T$ with
	\begin{equation}
		0<T\leq T^\ast:= \dfrac{\min\{E_0, \ln 2\}}{\sum_{l=0}^3 4^{l}\widetilde{C}E_0^{l}}
	\end{equation}
	such that the Cauchy problem of system \eqref{eq1.1}-\eqref{eq1.2} with initial data $(Q_0,u_0)\in H^{s+1}\times H^s$ admits a unique local solution
	\begin{equation}
		Q\in L^\infty\left( 0,T;H^{s+1} \right)\cap L^2\left( 0,T;H^{s+2} \right)\quad\text{and}\quad u\in L^\infty\left( 0,T;H^s \right)\cap L^2\left( 0,T;H^{s+1} \right).
	\end{equation}
	Moreover, $(Q,u)$ satisfies the following energy inequality
	\begin{equation}
		\|u(t)\|_{H^s}^2 + \|Q(t)\|_{H^{s+1}}^2 
		+\int_0^t \mu\|\nabla u(\tau)\|_{H^{s}}^2 + (|a|+1)\Gamma\|Q(\tau)\|_{H^{s+1}}^2 + \Gamma\|\Delta Q(\tau)\|_{H^{s}}^2 d\tau \leq C_0,
	\end{equation}
	for all $t\in [0,T]$, and $C_0$ is a positive constant depending only on $E_0$, $s$, $|a|$, $b$, $c$, $\mu$, $\kappa$, $\lambda$ and $\Gamma$.
\end{theorem}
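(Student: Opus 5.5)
We plan to prove Theorem~\ref{thm1.1} by the standard three-step scheme: an approximation, uniform a priori estimates in $H^{s+1}\times H^s$, and compactness plus uniqueness. For the approximation we use a Friedrichs mollifier $\mathcal{J}_\varepsilon$ (frequency cut-off) in the spirit of \cite{active-limit}. We solve
\[
\partial_t Q^\varepsilon=\mathcal{J}_\varepsilon\big[-\mathcal{J}_\varepsilon u^\varepsilon\cdot\nabla \mathcal{J}_\varepsilon Q^\varepsilon+\cdots+\Gamma H[\mathcal{J}_\varepsilon Q^\varepsilon]\big],\qquad
\partial_t u^\varepsilon=\mathbb{P}\mathcal{J}_\varepsilon[\cdots],
\]
with $\mathbb{P}$ the Leray projector. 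This is an ODE on $H^{s+1}\times H^s_\sigma$ with locally Lipschitz right-hand side, so the Picard theorem gives a local solution. The solution can be continued as long as the energy stays bounded, and $\mathcal{J}_\varepsilon^2=\mathcal{J}_\varepsilon$ lets the energy estimates below pass to the approximation unchanged.

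The heart of the proof is the a priori estimate. We apply $\partial^k$, $0\le k\le s$, to the $u$-equation and test with $\partial^k u$. We apply $\partial^k$, $0\le k\le s+1$, to the $Q$-equation and test with $\partial^k Q$, and at top order with $-\partial^k\Delta Q$. The reason for testing with $-\Delta Q$ is the standard Beris--Edwards cancellation. The terms
\[
\langle \partial^k(Q\Omega-\Omega Q),\partial^k\Delta Q\rangle \quad\text{and}\quad \langle \partial^k\nabla\cdot(Q\Delta Q-\Delta QQ),\partial^k u\rangle
\]
cancel at highest order, and so do
\[
\langle \partial^k\nabla\cdot(\nabla Q\odot\nabla Q),\partial^k u\rangle \quad\text{and}\quad \langle \partial^k(u\cdot\nabla Q),\partial^k\Delta Q\rangle .
\]
The same holds for the pair $\lambda|Q|D$ versus $\lambda\nabla\cdot(|Q|\Delta Q)$. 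What remains are commutators $[\partial^k,Q]\nabla u$, $[\partial^k,u\cdot\nabla]Q$ and $[\partial^k,|Q|]\Delta Q$. The active term $\kappa\nabla\cdot Q$ is linear and is absorbed by Young's inequality into $\mu\|\nabla u\|_{H^s}^2$ and $\|Q\|_{H^{s+1}}^2$. The $-aQ$ term produces the $(|a|+1)\Gamma\|Q\|_{H^{s+1}}^2$ contribution, moved to the right-hand side when $a<0$. The polynomial terms $b Q^2$ and $cQ\mathrm{Tr}(Q^2)$ are bounded via algebra properties of $H^{s}$ ($s\ge2$).

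We expect the main obstacle to be the commutators at the borderline regularity $s=2$, where $\nabla u\notin L^\infty$ and $\nabla^2 Q\notin L^\infty$. We would use the Kato--Ponce estimate
\[
\|[\partial^k,f]g\|_{L^2}\lesssim \|\nabla f\|_{L^\infty}\|g\|_{H^{k-1}}+\|f\|_{H^k}\|g\|_{L^\infty}.
\]
Where $L^\infty$ of the wrong factor is unavailable, we would use the refined variant with $L^3$--$L^6$ or $L^4$--$L^4$ pairings combined with Gagliardo--Nirenberg, putting one factor in $H^{s+1}$ for $u$ or $H^{s+2}$ for $Q$ and absorbing it into the dissipation. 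The term $|Q|$ is only Lipschitz in $Q$; for it we use $\|\nabla|Q|\|\le\|\nabla Q\|$ and $|Q|\in W^{1,\infty}$, and estimate $\partial^k|Q|$ only up to order $s+1$ via the chain rule away from $Q=0$. If needed we would regularize $|Q|$ by $\sqrt{|Q|^2+\delta^2}$ inside the approximation.

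Collecting these bounds should yield, for $E(t)=\|Q\|_{H^{s+1}}^2+\|u\|_{H^s}^2$,
\[
\frac{d}{dt}E+\mu\|\nabla u\|_{H^s}^2+\Gamma\|\Delta Q\|_{H^s}^2+(|a|+1)\Gamma\|Q\|_{H^{s+1}}^2\le \widetilde{C}\sum_{l=0}^3E^{l+1},
\]
with cubic growth coming from $cQ\mathrm{Tr}(Q^2)$ paired with $\Delta Q$. This is \eqref{eq2.11}. A comparison ODE argument shows $E(t)\le 2E_0$ up to $T^\ast$: while $E\le 4E_0$, the right-hand side is at most $\widetilde{C}\sum_l 4^lE_0^l\,E$, and Gronwall gives $E\le E_0e^{T^\ast\widetilde C\sum 4^lE_0^l}\le 2E_0$ by the $\ln 2$ constraint. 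Integrating in time then yields the bound $C_0$ uniformly in $\varepsilon$. Finally, Aubin--Lions compactness passes to the limit. Uniqueness follows from an $L^2\times H^1$ estimate for the difference $(\delta Q,\delta u)$ of two solutions, using the same cancellation structure and Gronwall's inequality.
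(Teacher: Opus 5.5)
The genuine gap is your treatment of $|Q|$. After the top-order cancellation between $\lambda|Q|D$ and $\lambda\nabla\cdot(|Q|\Delta Q)$, several remainders survive: $[\partial^k,|Q|]D$ tested against $\partial^k\Delta Q$, $[\partial^k,|Q|]\Delta Q$ tested against $\partial^k\nabla u$, and $\partial^k(|Q|D)$, $\partial^k(|Q|Q)$. All of these contain $\partial^l|Q|$ with $2\le l\le s+1$, which you need in $L^2$ or $L^6$.

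The chain rule away from $\{Q=0\}$ produces factors $|Q|^{-1}$ and $|Q|^{-3}$ that no norm of $Q$ controls. Regularizing by $\sqrt{|Q|^2+\delta^2}$ does not help either: its second derivatives equal $(\partial_iQ:\partial_jQ)/\delta$ on $\{Q=0\}$, so no bound is uniform as $\delta\to0$.

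In fact no such bound exists. Take a fixed symmetric traceless $A$ with $|A|=1$ and $\theta\in C_c^\infty(\R^3)$ equal to $1$ near $0$. Then $Q=x_1\theta(x)A$ lies in every $H^m$, but $|Q|=|x_1|\theta$. Its $\partial_1^2$ contains a multiple of the surface measure on $\{x_1=0\}$, and $|x_1|\theta\notin H^\sigma$ for every $\sigma\ge 3/2$.

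You should know that the paper does not close this step either. It invokes Lemma~\ref{lem-A6}, $\||Q|\|_{H^s}\le C\|Q\|_{H^s}$, at levels $s$ and $s+1$ in the bounds for $K_9$--$K_{12}$ and $K_{14}$. The example above contradicts that lemma for every $s\ge 2$. Its proof breaks where the factor $\partial\mathcal{Q}/\partial Q_{\alpha\beta}$, bounded by $1$, is pulled out of the convolution with the sign-changing kernel $h_j$. So the $H^{s+1}\times H^s$ energy argument, yours or the paper's, does not control these terms. It does close if $\lambda=0$ or if $|Q|$ is replaced by a smooth function of $Q$.

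Apart from this, your plan matches the paper's up to inessential choices:
\begin{itemize}
\item The paper uses the linear iteration \eqref{eq-ap}, where the top-order couplings are carried by the new iterate so that the cancellations of Lemma~\ref{lem.cancel} survive.
\item It closes by the induction $\sup_t\mathbb{E}_n\le 4E_0$. The inhomogeneous terms $\widetilde C\sum_{l=2}^3\mathbb{E}_n^l$ in \eqref{eq2.11} are what force $\min\{E_0,\ln 2\}$ in $T^\ast$; your closed inequality only uses the $\ln 2$ constraint.
\item Lemma~\ref{new-com} bounds the commutators $[\partial^k,Q]\nabla u$, $[\partial^k,u\cdot\nabla]Q$ and $[\partial^k,Q]\Delta Q$ by energy norms alone, via $L^6$--$L^3$ and $L^\infty$--$L^2$ pairings. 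So nothing needs to go into the dissipation.
\end{itemize}
Your uniqueness estimate at the $L^2\times H^1$ level is sound. It is preferable to the paper's $H^{s+1}\times H^s$ difference estimate, since it needs only $\bigl||Q_1|-|Q_2|\bigr|\le|Q_1-Q_2|$ and $|\nabla|Q||\le|\nabla Q|$.
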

\begin{remark}
	Note that our result in Theorem~\ref{thm1.1} holds for any large initial data $(Q_0,u_0)$ in $H^{s+1}\times H^s$ with the integer $s\geq 2$ and $a\in\R$, which indeed relaxes the initial requirements of \cite{active-limit}. Moreover, it also differs from the known result regarding the existence of local strong solutions for its passive counterpart in \cite{Q-unique-3d}, where the smallness of $\|Q_0\|_{L^\infty}$ is essential for conducting the a priori estimate for the proposed iteration scheme by utilizing the maximal regularities of Stokes and parabolic operators in Besov spaces.
	%
\end{remark}
\begin{remark}
	We emphasize that $|Q|$ appearing in system \eqref{eq1.1} also induces strong nonlinearity.
	Thus we need more careful treatments on the estimates of the related coupling terms. As shown in \cite{active-limit}, one can see that the $H^s$ norm of $|Q|$ can be controlled by $\|Q\|_{H^s}$ in $\mathbb{T}^d$, which enables us to achieve the desired estimates. In our case, we can still apply this feature because, based on the fact that norms $\|\cdot\|_{\dot{H}^s}$ and $\|\cdot\|_{\dot{B}_{2,2}^s}$ are equivalent, one can generalize it to the whole space case (see Lemma~\ref{lem-A6} for the details).
\end{remark}

Based on the above result, we further show that this local solution is indeed global when the initial data is sufficiently small. However, we need to impose the condition $a>0$ such that the linear term $-aQ$ from the free energy exhibits damping effects. This parameter arises in the definition of dissipation energy functional that used in performing an a priori estimate for system \eqref{eq1.1} via the energy method. Otherwise, we cannot use the dissipation effect of $Q$ to overcome
the difficulty caused by the linear active force $\kappa\nabla\cdot Q$; see \cite{active-limit} and Section~\ref{sec-3} below for the details. Then, together with the standard continuation argument, we can conclude the global existence and uniqueness of solutions to \eqref{eq1.1}-\eqref{eq1.2} in the $H^s$-framework. More precisely, our second main result is stated as follows.
\begin{theorem}\label{thm1.2}
	Let $s\geq 2$ be an integer and $a>0$. If there exists a small real number $\delta_0$ such that $E_0\leq \delta_0 $, then the Cauchy problem of system \eqref{eq1.1}-\eqref{eq1.2} with initial data $(Q_0,u_0)\in H^{s+1}\times H^s$ admits a unique global solution $(Q,u)$ satisfying
	\begin{equation}
		Q\in L^\infty\left(  [0,+\infty);H^{s+1} \right)\cap L^2\left(  [0,+\infty);H^{s+2} \right)~\text{and}~u\in L^\infty\left( [0,+\infty);H^s \right)\cap L^2\left(  [0,+\infty);H^{s+1} \right).
	\end{equation}
	Moreover, $(Q,u)$ satisfies the following energy inequality
	\begin{equation}
		\|u(t)\|_{H^s}^2 + \|Q(t)\|_{H^{s+1}}^2 
		+\int_0^t \|\nabla u(\tau)\|_{H^{s}}^2 + \|  Q(\tau)\|_{H^{s+1}}^2 +\|\Delta
		Q(\tau)\|_{H^{s}}^2 d\tau\leq C,
	\end{equation}
	for all $t\geq 0$,
	and $C$ is a positive constant depending on $\delta_0$, $s$, $a$, $b$, $c$, $\mu$, $\kappa$, $\lambda$ and $\Gamma$.
\end{theorem}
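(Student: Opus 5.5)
The plan is to combine the local existence of Theorem~\ref{thm1.1} with a uniform-in-time a priori estimate, valid under smallness, through a standard continuation (bootstrap) argument. Set
\[
\mathcal{E}(t):=\|u(t)\|_{H^s}^2+\|Q(t)\|_{H^{s+1}}^2,\qquad
\mathcal{D}(t):=\mu\|\nabla u\|_{H^s}^2+\Gamma\big(a\|Q\|_{H^{s+1}}^2+\|\nabla Q\|_{H^{s+1}}^2\big).
\]
The goal is to show that if $\sup_{[0,T]}\mathcal{E}\le \delta$ with $\delta$ small, then
\[
\frac{d}{dt}\widetilde{\mathcal{E}}+c_1\mathcal{D}\le C\big(\mathcal{E}^{1/2}+\mathcal{E}+\mathcal{E}^{3/2}\big)\mathcal{D},
\]
where $\widetilde{\mathcal{E}}\simeq\mathcal{E}$ is a suitably modified energy. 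For $\delta$ small the right-hand side is absorbed, which gives $\mathcal{E}(t)+c\int_0^t\mathcal{D}\le C E_0$. Choosing $\delta_0$ so that $CE_0\le\delta/2$ closes the bootstrap. The bounds are uniform in time, so Theorem~\ref{thm1.1} can be reapplied repeatedly with a time step $T^\ast$ depending only on the bounded energy, and the solution extends to $[0,\infty)$. Uniqueness follows from the local uniqueness.

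The estimate comes from applying $\partial^k$, $0\le k\le s$, to the $u$-equation and testing with $\partial^k u$, and applying $\partial^k$, $0\le k\le s+1$, to the $Q$-equation and testing with $\partial^k(-\Delta Q+aQ)$. Equivalently, test with $-\partial^k H[Q]$ at the linear level; this picks up the dissipation $\Gamma\|\partial^k(\Delta Q-aQ)\|_{L^2}^2$, which requires $a>0$. The key cancellation is the standard one for the Beris–Edwards structure: the terms $Q\Omega-\Omega Q$ and $-\lambda|Q|D$ in the $Q$-equation cancel, at top order, against the stresses $\nabla\cdot(Q\Delta Q-\Delta Q Q)$ and $-\lambda\nabla\cdot(|Q|H)$. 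Likewise $(u\cdot\nabla)Q$ tested against $\Delta Q$ cancels with $-\nabla\cdot(\nabla Q\odot\nabla Q)$ at leading order. The remaining terms are commutators, which we bound by the refined commutator estimates used for Theorem~\ref{thm1.1}, together with Lemma~\ref{lem-A6} for $\||Q|\|_{\dot H^k}$ and the $L^\infty$ bounds available for $s\ge2$. Each such term is at least cubic and controlled by $(\mathcal{E}^{1/2}+\mathcal{E}+\mathcal{E}^{3/2})\mathcal{D}$. The polynomial terms $bQ^2$ and $cQ\operatorname{Tr}(Q^2)$ are handled similarly, using $\|Q\|_{L^\infty}\lesssim\mathcal{E}^{1/2}$.

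The main obstacle is the linear active force $\kappa\nabla\cdot Q$, which is not small. In the $\partial^k u$ estimate it produces $\kappa\int\partial^k\nabla\cdot Q\cdot\partial^k u$, which can neither be absorbed by smallness nor cancelled. I would integrate by parts to get
\[
|\kappa|\,\|\partial^k Q\|_{L^2}\|\partial^k\nabla u\|_{L^2}\le \tfrac{\mu}{2}\|\nabla u\|_{H^s}^2+\tfrac{\kappa^2}{2\mu}\|Q\|_{H^s}^2 .
\]
The second term must be dominated by the $Q$-dissipation $\Gamma a\|Q\|_{H^{s+1}}^2$. To arrange this, I would weight the $Q$-energy by a large constant $M$, using $\widetilde{\mathcal{E}}=\|u\|_{H^s}^2+M\big(\|\nabla Q\|_{H^{s}}^2+a\|Q\|_{H^{s}}^2\big)$ plus the top-order piece, with $M\Gamma a\gg\kappa^2/\mu$. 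The difficulty is that scaling the $Q$-energy by $M$ destroys the exact cancellation of the coupling terms $Q\Omega-\Omega Q$ and $\lambda|Q|D$ with the corresponding stresses. The fix is that these couplings are quadratic, so the mismatch $(M-1)\times$(cubic term) is still bounded by $CM\mathcal{E}^{1/2}\mathcal{D}$ and absorbed for $\delta$ small depending on $M$. The higher-order $\kappa$ terms can alternatively be placed on $Q$ with one extra derivative against $\nabla u$, using $\|\nabla Q\|_{H^s}$ in the dissipation. With $M$ fixed first and then $\delta_0$, the estimate closes with constants depending on $a,b,c,\mu,\kappa,\lambda,\Gamma,s$, as stated in Theorem~\ref{thm1.2}.
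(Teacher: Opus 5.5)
Your proposal is correct and follows the paper's strategy. A large weight $M$, chosen using $a>0$, lets the $Q$-dissipation absorb the Cauchy--Schwarz bound $\frac{\mu}{2}\|\nabla u\|_{H^s}^2+C\kappa^2\mu^{-1}\|Q\|_{H^s}^2$ of the linear active term; every other term is bounded by $\sum_{l=1}^3\mathcal{E}^{l/2}\mathcal{D}$; and the standard continuation argument globalizes the local solution of Theorem~\ref{thm1.1}. The differences are minor: the paper tests the $Q$-equation with $M\partial^kQ-\partial^k\Delta Q$, putting $M$ only on the $L^2$-level part so the Beris--Edwards cancellations stay exact, whereas your weighting of the whole $Q$-energy needs the smallness absorption you describe (valid, since each mismatched top-order term is $\|(Q,u)\|_{L^\infty}$ times two dissipative factors such as $\|\nabla u\|_{H^s}\|\Delta Q\|_{H^s}$); also, your range $0\le k\le s+1$ for the $Q$-equation must be $0\le k\le s$, since otherwise you would need $\partial^{s+2}u$ and an $H^{s+2}$ energy for $Q$.
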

\begin{remark}
		Observed that our global existence result again holds for lower regular initial data than that of \cite{active-limit}.
		Unfortunately,
		we are not sure whether the local solutions established in Theorem~\ref{thm1.1} can globally exist for the case of $a\leq 0$, even if the initial data is sufficiently small. Moreover, compared with the result known for the 2D case, there still is a certain gap. In other words, the well-posedness for initial data $(Q_0,u_0)\in H^2\times H^1$ is unclear. Our future work will explore these directions.
\end{remark}

Now, we turn to the discussion of the large-time behavior of global solutions to the simplified active nematic system \eqref{eq1.1} close to the trivial equilibrium state.
In particular, we will restrict ourselves to the case $a>0$ and the integer $s\geq 4$.
In this setting, we will present the optimal time-decay estimates for the global classical solutions of \eqref{eq1.1}
and their spatial derivatives up to some order.

Indeed, one can find that there still exist several analytic works on the decay rates for the passive version of system \eqref{eq1.1}.
In \cite{Dai2016-decay}, the authors first deduced the following decay properties of weak solutions in the case of $\xi=0$:
\begin{equation}\label{eq1.10}
\|Q(t,\cdot)\|_{H^1} +\|u(t,\cdot)\|_{L^2} \lesssim (1+t)^{-\frac{3}{4}}.
\end{equation}
Later, Schonbek and Shibata \cite{LC-decay-19} obtained optimal decay rates for the $Q$-tensor in the $L^p$ norm $(1\leq p\leq \infty)$. Their result especially showed that the $Q$-tensor tends to zero in the $L^\infty$ norm with a rate $t^{-\frac{3}{2}}$, which was not previously observed in \cite{Dai2016-decay}.
In light of these studies, we hence expect that a similar result holds for the active case.

To achieve this goal, we will employ the Green's function method which is different from that used in \cite{Dai2016-decay,LC-decay-19}.
In particular, 
we investigate the precise behavior of the Green function of the following linearized system of \eqref{eq1.1}:
\begin{equation}\label{eq2.1}
	\begin{cases}
		Q_t -\Gamma \Delta Q+a\Gamma Q =0,\\
		u_t-\mu \Delta u -\kappa \mathbb{P}{\rm div} Q=0,\\
		(Q,u)(0,x)=(Q_0,u_0)(x).
	\end{cases}
\end{equation}
Let $\mathbb{G}_{Q,u}(t,x)$ be the Green function associated with \eqref{eq2.1}. Then by Duhamel's principle, one can conduct the decay estimates via the integral representation of the solution through $\mathbb{G}_{Q,u}$.
However, we do not intend to analyze the system \eqref{eq2.1} directly, because we observe that the structure of this linearized system is similar to that of the incompressible Oldroyd-B model (see \cite{Huang-Old-B2022,Huang-Old-B2024,Zi2014}). 
On the basis of these works, we apply $\mathbb{P}{\rm div}$ to the first equation of \eqref{eq2.1} and study the following auxiliary system instead:
\begin{equation}\label{eq2.2}
	\begin{cases}
		\sigma_t -\Gamma \Delta \sigma +a\Gamma \sigma =0,\\
		u_t-\mu \Delta u -\kappa \sigma=0,\\
		(\sigma,u)(0,x)=(\sigma_0,u_0):= (\mathbb{P} {\rm div}Q_0,u_0)(x),
	\end{cases}
\end{equation}
where $\sigma :=\mathbb{P} {\rm div}Q$. Again, $\mathbb{G}_{\sigma,u}(t,x)$ refers to the Green function of the above system.
In addition, one can see that \eqref{eq2.2} can be solved explicitly using the Fourier transform, rather than solving it directly.
Precisely, one has
\begin{equation}
	\widehat{\mathbb{G}_{\sigma,u}}(t,\xi)=
	\begin{bmatrix}
		\mathcal{A}(t,\xi)\mathbb{I}_3 & 0 \\
		\mathcal{B}(t,\xi)\mathbb{I}_3 &  \mathcal{C}(t,\xi)\mathbb{I}_3
	\end{bmatrix}.
\end{equation}
For the details please refer to
Section~\ref{Sec.4}.
Then, to understand the decay properties of the linear system \eqref{eq2.2},
we establish a rigorous analysis of $\widehat{\mathbb{G}_{\sigma,u}}(t,\xi)$ by utilizing the Fourier splitting method.
Together with the relation between $\sigma$ and $Q$, we eventually find
\begin{align}
	\left\| \partial_x^k Q_L(t)\right\|_{L^2} &\lesssim  e^{-a\Gamma t}(1+t)^{-\frac{3}{4}-\frac{k}{2}},\quad 0\leq k\leq s+1,\label{eq1.13}\\
	\left\| \partial_x^k u_L(t)\right\|_{L^2} &\lesssim (1+t)^{-\frac{3}{4}-\frac{k}{2}},\hspace{1.32cm} 0\leq k\leq s,\label{eq1.14}
\end{align}
where $(Q_L,u_L)$ denotes the corresponding solution of linear system \eqref{eq2.1}. The intriguing fact about the above decay rates is that a partial contribution from the free energy produces an extra exponential decay of $Q$-tensor field, unlike in \eqref{eq1.10}.
Note that \eqref{eq1.13} can also be referred to as a {\em stable mixing estimate} in the infinite rotational viscosity limit with a fixed constant activity $c>c_\star$, because there is no harm in setting $\Gamma=0$; this still indicates the algebraic time-decay rates for the $Q$-tensor. See also \cite{mixing-decay} for this concept. Needless to say, 
we then aim to extend such decay estimates to the full nonlinear system \eqref{eq1.1}.
%
%

However, when performing the optimal time-decay bounds of the solution to \eqref{eq1.1} by the Green's function method, the major difficulty lies in certain nonlinear terms, for example, $u\cdot\nabla Q$ and $Q\Delta Q$.
Basically, having the explicit solution by Duhamel's principle, such nonlinear terms involving the highest derivatives will induce a problem of loss of derivatives in nonlinear decay estimates
(see \eqref{eq5.1}-\eqref{eq-5.2}).
Hence, we cannot expect the decay rates of the highest derivatives of the solution to be optimal. To overcome this obstacle, we present the time-weighted estimates to exhibit some decay properties of the highest derivatives. Although their rates are not optimal, they are enough for us to address the expected time-decay estimates of solutions to \eqref{eq1.1}. In addition to this, we also found that deriving a stable mixing estimate akin to \eqref{eq1.13} involves a more complicated analysis, which again caused by some higher order nonlinear terms.
Nevertheless, 
we will discuss these specific difficulties after stating our main results.
%
More precisely, we have the following upper time-decay estimates of the solutions.
\begin{theorem}\label{Thm1.3}
	Let $s\geq 4$ and $(Q,u)$ be the global solution to the Cauchy problem of system \eqref{eq1.1}-\eqref{eq1.2} constructed in Theorem~\ref{thm1.2}.
	Suppose that $(Q_0,u_0)\in L^1$ and
	\[
	\widebar{E}_0:=\|Q_0\|_{H^{s+1}} + \|u_0\|_{H^s} + \|(Q_0,u_0)\|_{L^1}.
	\]
	Then we have the following time-decay estimates of the solution:
	\begin{equation}\label{eq:1.11}
			\begin{split}
				\|\partial^kQ(t) \|_{L^2}  \leq& C\widebar{E}_0(1+t)^{-\frac{3}{4}-\frac{k}{2}}e^{-\frac{(c-c_\star)\Gamma t}{4}}, \quad \text{\rm for all $k\leq s-1$},\\
				\|\partial^ku(t) \|_{L^2}  \leq& C\widebar{E}_0(1+t)^{-\frac{3}{4}-\frac{k}{2}}, \hspace{1.99cm} \text{\rm for all $k\leq s-2$},
			\end{split}
	\end{equation}
	and
	\begin{equation}\label{eq:1.12}
	\begin{split}
		\|\partial^k u(t)\|_{L^2} \leq& C\widebar{E}_0(1+t)^{-\frac{k}{2}},\quad \text{\rm when $k=s-1$},\\
		\|\partial^{k} Q(t)\|_{H^1}+ \|\partial^k u(t)\|_{L^2} \leq& C\widebar{E}_0(1+t)^{-\frac{k}{2}},\quad \text{\rm when $k= s$},
	\end{split}
	\end{equation}
	for any $t\geq 0$ and generic positive constant $C$ which depends only on $s$, $b$, $c$, $c_\star$, $\mu$, $\kappa$, $\lambda$ and $\Gamma$.
	Meanwhile, it holds that
	\begin{equation}\label{eq:1.13}
		\begin{split}
			\|\partial^kQ(t) \|_{L^\infty}  \leq& C\widebar{E}_0(1+t)^{-\frac{3}{2}-\frac{k}{2}}e^{-\frac{(c-c_\star)\Gamma t}{4}}, \quad \text{\rm for all $k\leq s-3$},\\
			\|\partial^ku(t) \|_{L^\infty}  \leq& C\widebar{E}_0(1+t)^{-\frac{3}{2}-\frac{k}{2}}, \hspace{1.99cm} \text{\rm for all $k\leq s-4$}.
		\end{split}
	\end{equation}
	Moreover, for all $p\in [2,\infty]$, one has
	\begin{equation}\label{eq:1.14}
		\begin{split}
			\|\partial^\beta Q(t) \|_{L^p}  \leq& C\widebar{E}_0(1+t)^{-\frac{3}{2}+\frac{3}{2p}-\frac{\beta}{2}}e^{-\frac{(c-c_\star)\Gamma t}{4}},\quad \text{\rm for $0\leq \beta \leq s+ \tfrac{3}{p}-\tfrac{5}{2}$},\\
			\|\partial^\beta u(t) \|_{L^p}  \leq& C\widebar{E}_0(1+t)^{-\frac{3}{2}+\frac{3}{2p}-\frac{\beta}{2}}, \hspace{1.99cm}  \text{\rm for $0\leq \beta \leq s+ \tfrac{3}{p}-\tfrac{7}{2}$}.
		\end{split}
	\end{equation}
\end{theorem}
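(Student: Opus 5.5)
We prove Theorem~\ref{Thm1.3} by combining the linear Green's function bounds \eqref{eq1.13}--\eqref{eq1.14} with Duhamel's principle and a time-weighted energy estimate for the highest derivatives, closed by a continuity argument. Write \eqref{eq1.1} as the linear system \eqref{eq2.1} plus nonlinear sources,
\[
Q_t-\Gamma\Delta Q+a\Gamma Q=F_1,\qquad u_t-\mu\Delta u-\kappa\mathbb{P}{\rm div}\,Q=\mathbb{P}F_2 .
\]
Here $F_1=-u\cdot\nabla Q-Q\Omega+\Omega Q+\lambda|Q|D+\Gamma(b[Q^2-\tfrac13{\rm Tr}(Q^2)\mathbb{I}_3]-cQ{\rm Tr}(Q^2))$ and $F_2=-u\cdot\nabla u-\nabla\cdot(\nabla Q\odot\nabla Q)+\nabla\cdot(Q\Delta Q-\Delta QQ)-\lambda\nabla\cdot(|Q|H[Q])$. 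The $Q$-equation decouples, and its semigroup is exactly $e^{-a\Gamma t}e^{\Gamma t\Delta}$. Hence
\[
Q(t)=e^{-a\Gamma t}e^{\Gamma t\Delta}Q_0+\int_0^t e^{-a\Gamma(t-\tau)}e^{\Gamma(t-\tau)\Delta}F_1(\tau)\,d\tau .
\]
For $u$ we use $\mathbb{G}_{\sigma,u}$ from Section~\ref{Sec.4}: $u$ equals the heat evolution of $u_0$ plus $\kappa\int_0^t e^{\mu(t-\tau)\Delta}\mathbb{P}{\rm div}\,Q(\tau)d\tau$ plus the Duhamel term in $\mathbb{P}F_2$.

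We introduce the weighted quantity
\[
M(t)=\sup_{0\le\tau\le t}\Big\{\sum_{k\le s-1}(1+\tau)^{\frac34+\frac k2}e^{\frac{a\Gamma\tau}{2}}\|\partial^kQ\|_{L^2}+\sum_{k\le s-2}(1+\tau)^{\frac34+\frac k2}\|\partial^ku\|_{L^2}+\mathcal{H}(\tau)\Big\}.
\]
Here $\mathcal{H}(\tau)$ collects $(1+\tau)^{\frac{s-1}{2}}\|\partial^{s-1}u\|_{L^2}$ and $(1+\tau)^{\frac s2}(\|\partial^sQ\|_{H^1}+\|\partial^su\|_{L^2})$. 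Note that $a\Gamma/2=(c-c_\star)\Gamma/4$, which is the exponential rate in the statement. The goal is $M(t)\le C\widebar E_0+CM(t)^2$. Since Theorem~\ref{thm1.2} makes $E_0$ small, this closes with $M\le C\widebar E_0$.

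\textbf{Low-order bounds.} For $\partial^kQ$ we split the Duhamel integral at $t/2$. On $[0,t/2]$ we use $L^1\to L^2$ heat bounds with $\|F_1\|_{L^1}$ quadratic. On $[t/2,t]$ we use $L^2$ bounds and put derivatives on $F_1$, which is allowed since $k\le s-1$ loses one derivative and the $Q$-part is still controlled by $M$. Every term of $F_1$ contains a factor of $Q$. Thus $e^{-a\Gamma(t-\tau)}$ times the factor $e^{-a\Gamma\tau/2}$ coming from $Q(\tau)$ yields $e^{-a\Gamma t/2}$. The $u$ factors decay only algebraically, but their product with the algebraic rates stays integrable, so the full weight is recovered.

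For $u$ we use \eqref{eq1.14}. The forcing $\kappa\mathbb{P}{\rm div}\,Q$ is exponentially decaying, so it contributes an optimal algebraic rate. The $Q$-terms in $F_2$ are exponentially small. The convection term $u\cdot\nabla u=\nabla\cdot(u\otimes u)$ is treated in the Navier--Stokes way: put one derivative on the kernel over $[0,t/2]$, giving $(1+t)^{-\frac54-\frac k2}\int\|u\|_{L^2}^2$, which is bounded because $\|u\|_{L^2}^2\lesssim(1+\tau)^{-3/2}$ is integrable. On $[t/2,t]$ we use $(t-\tau)^{-1/2}$ kernel smoothing. The restriction $k\le s-2$ leaves room for $\partial^k\nabla\cdot(Q\Delta Q)$, which requires $\partial^{k+3}Q$, within $H^{s+1}$.

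\textbf{Highest derivatives (main obstacle).} For $k=s-1,s$ the terms $u\cdot\nabla Q$ and $Q\Delta Q$ lose derivatives in the Duhamel formula, so we instead run the $\dot H^{k}$ energy estimates from Section~\ref{sec-3}. Applying $\partial^k$ to the $Q$-equation (one order higher) and to the $u$-equation, the cancellation between $Q\Omega-\Omega Q$ and $\nabla\cdot(Q\Delta Q-\Delta QQ)$ removes the top-order coupling. The commutator estimates bound the rest by $\sqrt{E}\times$ dissipation plus lower-order terms. This gives a Lyapunov inequality $\frac{d}{dt}\mathcal{E}_k+c\mathcal{D}_k\le C\|\partial^{k-1}(\cdot)\|^2$-type terms with $\mathcal{D}_k\gtrsim\mathcal{E}_k$ up to the low-frequency part. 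We multiply by $(1+t)^{k+\epsilon}$ and integrate. The weight-derivative term $(1+t)^{k-1+\epsilon}\mathcal{E}_k$ is absorbed by induction via $(1+t)^{k-1+\epsilon}\mathcal{D}_{k-1}$, a standard iterative time-weighted argument, and the low-order input from the previous step gives the non-optimal rates $(1+t)^{-k/2}$ stated in \eqref{eq:1.12}.

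The delicate point is the active coupling $\kappa\nabla\cdot Q$ at top order, together with keeping the exponential factor while doing so. I would handle it with the damping $a\|\partial^kQ\|^2$ from the free energy and Young's inequality, as in Theorem~\ref{thm1.2}. The exponential factor itself is needed only at levels $\le s-1$, which come from Duhamel, so the top-order estimates may be purely algebraic.

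\textbf{Remaining estimates.} Finally, \eqref{eq:1.13} follows from Gagliardo--Nirenberg, $\|f\|_{L^\infty}\lesssim\|\partial^{k+1}f\|_{L^2}^{1/2}\|\partial^{k+2}f\|_{L^2}^{1/2}$, using the $L^2$ rates up to order $k+2\le s-1$ (resp.\ $s-2$). Then \eqref{eq:1.14} follows by interpolating between $L^2$ and $L^\infty$. Fractional $\beta$ is handled by interpolation in Sobolev index, and the range of $\beta$ matches $s+\frac3p-\frac52$ exactly.
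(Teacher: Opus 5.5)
Your proposal has two genuine gaps.

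\textbf{The exponential factor is lost at level $k=s-1$.} On $[t/2,t]$ you put all $s-1$ derivatives on $F_1$. Then $\partial^{s-1}(u\cdot\nabla Q)$ contains $u\cdot\nabla\partial^{s-1}Q$, whose $Q$-factor is $\partial^{s}Q$. In your scheme $\partial^sQ$ has only the algebraic top-order bound $\|\partial^sQ(\tau)\|_{L^2}\lesssim M(1+\tau)^{-s/2}$. This piece is therefore only bounded by $\int_{t/2}^t e^{-a\Gamma(t-\tau)}\|u\|_{L^\infty}\|\partial^sQ\|_{L^2}\,d\tau\lesssim M^2(1+t)^{-\frac32-\frac s2}$, which carries no factor $e^{-a\Gamma t/2}$. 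Against the weight $(1+t)^{\frac34+\frac{s-1}{2}}e^{\frac{a\Gamma t}{2}}$ in $M(t)$ it cannot be controlled. So ``every term of $F_1$ contains a factor of $Q$'' fails exactly for the transport term. Your claim that the exponential is needed only at levels $\le s-1$ is also false, because the level-$(s-1)$ Duhamel estimate consumes $\partial^sQ$.

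The paper's fix is to write $u\cdot\nabla Q={\rm div}(u\otimes Q)$ and, on $[t/2,t]$, move one derivative onto the kernel via \eqref{eq417}. This gives $\|\nabla e^{\Gamma(t-\tau)\Delta}\partial^{s-1}(u\otimes Q)\|_{L^2}\lesssim(t-\tau)^{-1/2}\|\partial^{s-1}(u\otimes Q)\|_{L^2}$. Only $\partial^jQ$ with $j\le s-1$ appear, and the singularity is integrable. The alternative would be to prove exponential decay of $\partial^sQ$ separately.

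\textbf{The bootstrap does not close under the stated hypotheses.} $\widebar{E}_0$ contains $\|(Q_0,u_0)\|_{L^1}$, which is only assumed finite. Theorem~\ref{thm1.2} makes $E_0$ small, not $\widebar{E}_0$. An inequality $M\le C\widebar{E}_0+CM^2$ yields $M\le 2C\widebar{E}_0$ only if $4C^2\widebar{E}_0<1$, so you are silently assuming $L^1$-smallness. The paper deliberately avoids that assumption (compare Remark~\ref{rk5.2}).

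The paper decouples the two steps. Proposition~\ref{prop5.1} closes a time-weighted energy estimate on its own, with weights $(1+t)^k$ on $\|\partial^k(\cdot)\|_{H^{s-k}}$. The weight-derivative term at level $k$ is absorbed by the dissipation at level $k-1$, and level $0$ carries no weight, so no decay input and no $L^1$ norm are needed. This gives a small quantity $\mathcal{W}\lesssim\sqrt{E_0}$ that also encodes the rates $(1+t)^{-k/2}$ of derivatives. Then in every product in $f_1,f_2,f_3$ one factor is measured by $\mathcal{W}$ and the other by the decay norm $\mathcal{V}$. The result is $\mathcal{V}\le C\widebar{E}_0+C\sum_{m}\mathcal{W}^m\mathcal{V}$, which is linear in $\mathcal{V}$ with a small coefficient.

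Your scheme couples the two steps. Your top-order step uses weights $(1+t)^{k+\epsilon}$, which need low-order decay input. Bounds such as $\int_0^{t/2}\|u\|_{L^2}^2\,d\tau\lesssim M^2$ are genuinely quadratic in the non-small quantity.

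Similarly, the coupling $\kappa\mathbb{P}{\rm div}\,Q$ must stay inside $\mathcal{B}$, acting on $Q_0$ and on the nonlinear sources. Treated as a forcing with the full $Q(\tau)$, it is linear in $M$ with the non-small coefficient $\kappa$. On $[0,t/2]$ it would also require $\|Q(\tau)\|_{L^1}$, which $M(t)$ does not control.
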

\begin{remark}
From the above stable mixing estimates, it can be seen that the threshold of the effect on the exponential decay of the orientational field $Q$ is indeed related to the Landau-de Gennes free energy.
In particular, due to the contribution of the free energy, active fluids with nematic phase will converge to the isotropic phase ($Q=0$) with an exponential decay rate that depends on the constant activity number $c$ and rotational viscosity $\Gamma^{-1}$. And this rate can be enhanced by increasing the value of the active constant. Moreover,
optimal algebraic time-decay rates compared with those of the heat equation can still be obtained for the orientational field $Q$ in the absence of the free energy, where the rotational viscosity $\Gamma^{-1}$ tends to infinity in the system \eqref{eq1.1}.
To the best of our knowledge, such theoretical observations are reported for the first time in the literature of active nematics. However, it is worth noting that our main result is proved for a simplified active system of \eqref{eqiacl} for small initial perturbations around the equilibrium state; the active forcing induced by constant activity is seen to be irrelevant and the system behaves like its passive model. Therefore, we believe that the result in Theorem~\ref{Thm1.3} may not be tenable once we allow the active concentration to change; it will largely depend on the active parameters $\alpha_1$, $\alpha_2$ in system \eqref{eqiacl} and the size of initial data. See also \cite{Giomi2012,active-limit} and references therein.
\end{remark}

\begin{remark}
It is important to mention that Theorem~\ref{Thm1.3} is also applicable to the passive system of \eqref{eq1.1} discussed in \cite{Qtensor12}.
This is because one can essentially think of the associated linearized system as the system \eqref{eq2.2} with $\kappa=0$. In this case, the corresponding kernel that we deal with has a diagonal form which is equal to $\widehat{\mathbb{G}_{\sigma,u}}(t,\xi)$ with $\mathcal{B}=0$. Since the exponential time-decay is related to the kernel $\mathcal{A}$. Together with the fact that the nonlinear structure of the passive system is similar to that of \eqref{eq1.1}, the desired result can be achieved through our method with minor modifications.
Recall that the Landau-de Gennes free energy in the original theory of liquid crystals is a function that depends on temperature rather than activity. Hence, Theorem~\ref{Thm1.3} corresponding to the passive case will reveal that the nematic phase transitions into the isotropic phase at an exponential decay rate that depends on temperature. In addition, it is obvious that the orientational field will decay fast at high temperatures. These observations again seem to be new and agree with the facts that we know for thermotropic liquid crystals whose phase transitions occur in a certain temperature range.
In other words, thermotropic liquid crystals undergo a phase transition as temperature is increased, from crystalline to layered structure, then nematic, and finally the isotropic state; see \cite{LCs1992,Q-tensor-book,Khoo-book,review-thermal}. 
Nevertheless, we omit the proof and leave it to the interested readers.
\end{remark}
Furthermore, by imposing an additional condition on the initial data $u_0$, we also obtain the following result regarding the sharp decay characterization for the velocity field $u$ in the $L^2$ framework.

\begin{theorem}\label{Thm1.4}
	Let the assumptions of Theorem~\ref{Thm1.3} be satisfied and
	\begin{equation}\label{eq1.20}
		\int_{\R^3} u_0(x) dx \ne 0. 
	\end{equation}
	Then there exists some positive constant $C$ such that
	\begin{equation}
		C^{-1} (1+t)^{-\frac{3}{4}-\frac{k}{2}}\leq \left\|\partial^k u(t) \right\|_{L^2}
		\leq  C (1+t)^{-\frac{3}{4}-\frac{k}{2}},\quad t>0,
	\end{equation}
	holds for all $0\leq k\leq s-2$.
\end{theorem}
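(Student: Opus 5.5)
The upper bound is already given by Theorem~\ref{Thm1.3}, so only the lower bound needs proof. I will write the solution via Duhamel's principle as the linear part plus the nonlinear part and show that, when \eqref{eq1.20} holds, the linear part of $u$ has a sharp lower bound, while everything else decays strictly faster. Concretely,
\[
u(t)=u_L(t)+\int_0^t \mathcal{G}(t-\tau)*N(\tau)\,d\tau ,
\]
where $u_L$ is the velocity of the linear system \eqref{eq2.2} with data $(\mathbb{P}{\rm div}Q_0,u_0)$. I will further split
\[
u_L=\mathcal{C}(t,D)u_0+\mathcal{B}(t,D)\sigma_0 .
\]

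\textbf{Heat part.} Since $\mathcal{C}(t,\xi)=e^{-\mu|\xi|^2t}$, Plancherel gives
\[
\|\partial^k\mathcal{C}u_0\|_{L^2}^2\ \geq\ \int_{|\xi|\le r}|\xi|^{2k}e^{-2\mu|\xi|^2t}|\hat u_0(\xi)|^2d\xi .
\]
The data lie in $L^1$, so $\hat u_0$ is continuous, and $\hat u_0(0)=\int u_0\,dx\neq0$. Hence $|\hat u_0(\xi)|\ge \tfrac12|\hat u_0(0)|$ for $|\xi|\le r$ with $r$ small. Rescaling $\xi=\eta/\sqrt t$ then gives the lower bound $c_1(1+t)^{-\frac34-\frac k2}$ for $t\ge1$.

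\textbf{Coupling term $\mathcal{B}$.} Explicitly,
\[
\mathcal{B}=\kappa\,\frac{e^{-(\Gamma|\xi|^2+a\Gamma)t}-e^{-\mu|\xi|^2t}}{(\mu-\Gamma)|\xi|^2-a\Gamma},
\]
the standard divided difference, bounded by $Ce^{-\min(\mu,\Gamma)|\xi|^2t}$ for small $|\xi|$. Moreover $\hat\sigma_0=O(|\xi|\,\|Q_0\|_{L^1})$ because of the divergence. This term is therefore $O\big((1+t)^{-\frac54-\frac k2}\big)$ at low frequency and exponentially small at high frequency. So it is faster by $(1+t)^{-1/2}$.

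\textbf{Nonlinear part.} I will reuse the nonlinear estimates from the proof of Theorem~\ref{Thm1.3}, but sharpened. The forcing $N$ is the divergence of quadratic terms ($u\otimes u$, $\nabla Q\odot\nabla Q$, $Q\Delta Q-\Delta QQ$, $|Q|H$) plus the transported $\mathbb{P}{\rm div}$ of nonlinear $Q$-terms. All these quadratic quantities are in $L^1$ with bound $\widebar E_0^2(1+\tau)^{-3/2}$, and the $Q$-pieces carry the extra factor $e^{-a\Gamma\tau/2}$. Using the divergence structure, the low-frequency kernel applied to ${\rm div}$ gives $(1+t-\tau)^{-\frac54-\frac k2}$. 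On $[0,t/2]$ this contributes $\widebar E_0^2(1+t)^{-\frac54-\frac k2}$, up to possibly a logarithm if the $u\otimes u$ piece is only time-integrable borderline. On $[t/2,t]$ I will put the derivatives on $N$ and use the decay rates already proved in \eqref{eq:1.11}. The result is
\[
\Big\|\partial^k\!\int_0^t\cdots\Big\|_{L^2}\le C\widebar E_0^{2}(1+t)^{-\frac34-\frac k2}\big[(1+t)^{-\frac12}\ln(2+t)\big].
\]

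\textbf{Conclusion.} Combining the three pieces,
\[
\|\partial^ku\|_{L^2}\ge c_1(1+t)^{-\frac34-\frac k2}-C(1+t)^{-\frac54-\frac k2}\ln(2+t),
\]
which is at least $\tfrac{c_1}{2}(1+t)^{-\frac34-\frac k2}$ for $t\ge T_0$. For $t\le T_0$ I will use continuity and the fact that $u(t)\neq0$, which follows by backward uniqueness or simply by the lower bound holding beyond $T_0$. Alternatively, I will use smallness of $\widebar E_0$ relative to $|\hat u_0(0)|$ if the paper's smallness allows; then the bound holds for all $t$ after adjusting $C$.

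\textbf{Main obstacle.} The hard step is the nonlinear Duhamel estimate. It must gain the extra factor $(1+t)^{-1/2}$ up to logs uniformly for $k\le s-2$, despite the loss of derivatives at the top order. I will handle this as in Theorem~\ref{Thm1.3}: the range $k\le s-2$ leaves two derivatives of room. On $[t/2,t]$ I will place at most one derivative on the kernel and bound $\partial^{k+1}$ of the quadratic terms by the decay in \eqref{eq:1.11}–\eqref{eq:1.12}. The first quantity to check is that $\|\partial^{k}(u\cdot\nabla u)\|_{L^2}$, with $k\le s-2$, decays like $(1+\tau)^{-\frac{5}{2}-\frac k2}$ via the $L^\infty$ bounds in \eqref{eq:1.13}.
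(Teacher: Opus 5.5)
Your large-time argument matches the paper's proof of Proposition~\ref{prop5.3}. It uses Duhamel's formula, a lower bound for $e^{-\mu|\xi|^2t}\hat u_0$ on a ball where $|\hat u_0|\ge\frac12|\hat u_0(0)|$, and an extra factor $|\xi|$, hence $(1+t)^{-1/2}$, for $\mathcal{B}\hat\sigma_0$ and for all Duhamel integrals, coming from the divergence structure. Working with Minkowski's inequality in the full $L^2$ norm is fine: you need the high-frequency bounds, and these are available for $k\le s-2$. The paper instead restricts to $B_r(0)$ and uses only $\|\hat f_i\|_{L^\infty}\le\|f_i\|_{L^1}$. However, its step $\|\int_0^tF\,ds\|^2\le\int_0^t\|F\|^2ds$ in the bound for $\Upsilon$ is not valid as written, and Minkowski is the correct replacement.

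\textbf{The gap: the range $0<t\le T_0$.} Neither justification you give for $u(t)\neq0$ works.
\begin{itemize}
\item Backward uniqueness can only exclude $(Q,u)(t_1)=(0,0)$, not $u(t_1)=0$ alone.
\item The lower bound for $t\ge T_0$ says nothing about an earlier $t_1$, because $Q(t_1)\neq0$ regenerates velocity through $\kappa\mathbb{P}{\rm div}\,Q$.
\item The smallness alternative is unavailable. We have $\bar E_0\ge\|u_0\|_{L^1}\ge|\hat u_0(0)|$, and nothing makes $\|Q_0\|_{L^1}$ small relative to $|\hat u_0(0)|$, which the $\mathcal{B}\hat\sigma_0$ term would need. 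The paper's closing appeal to smallness has the same defect.
\end{itemize}

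\textbf{A fix.} Use the zero frequency directly. For $t\le T_0$ and $|\xi|\le1$, every term of $\hat u(t,\xi)$ except $e^{-\mu|\xi|^2t}\hat u_0(\xi)$ is bounded by $C_{T_0}|\xi|$. This holds because $|\hat\sigma_0(\xi)|\le|\xi|\,\|Q_0\|_{L^1}$, $|\mathcal{B}|\le|\kappa|T_0$, $|\mathcal{C}|\le1$, and $\sup_{\tau\le T_0}\|f_i(\tau)\|_{L^1}<\infty$ by the energy bound. Choose $r_0\le1$ such that:
\begin{itemize}
\item $|\hat u_0|\ge\frac12|\hat u_0(0)|$ on $B_{r_0}(0)$;
\item $\mu r_0^2T_0\le\ln2$;
\item $C_{T_0}r_0\le\frac18|\hat u_0(0)|$.
\end{itemize}
Then $|\hat u(t,\xi)|\ge\frac18|\hat u_0(0)|$ on $B_{r_0}(0)$. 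Hence $\|\partial^ku(t)\|_{L^2}^2\ge c\,r_0^{2k+3}|\hat u_0(0)|^2$ uniformly on $[0,T_0]$.

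\textbf{A problem affecting the paper equally.} Hypothesis \eqref{eq1.20} can never be satisfied in this setting. Since $u_0\in L^1$, $\hat u_0$ is continuous. Also $\nabla\cdot u_0=0$, forced by $\nabla\cdot u=0$ and continuity at $t=0$, so $\xi\cdot\hat u_0(\xi)=0$ for all $\xi$. Taking $\xi=\epsilon e$ and letting $\epsilon\to0$ gives $e\cdot\hat u_0(0)=0$ for every unit vector $e$, that is, $\int_{\R^3}u_0\,dx=0$. So the statement is vacuous.

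Worse, for every admissible datum, your rescaling plus dominated convergence gives $\|\partial^k(\check{\mathcal{C}}(t)\ast u_0)\|_{L^2}=o(t^{-\frac34-\frac k2})$ as $t\to\infty$. Combined with your own gain on the remaining terms, this shows $\|\partial^ku(t)\|_{L^2}=o(t^{-\frac34-\frac k2})$. Thus the claimed sharpness of the upper rate fails for every genuine solution with $L^1$ data. A meaningful lower bound needs data whose transform is bounded below near $0$ on a cone. An example is $u_0=\mathbb{P}v_0$ with $v_0\in L^1$ and $\int v_0\,dx\neq0$, which is not itself in $L^1$.
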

\begin{remark}
	This result indicates that the upper bounds of algebraic time-decay rates of the velocity field obtained in Theorem~\ref{Thm1.3} are indeed optimal.
	In particular, inspired by Kagei and Kobayashi \cite{Kagei02,Kagei05}, we investigated the special case where the initial data $u_0\in L^1(\R^3)$ satisfy the additional condition \eqref{eq1.20}. Note that, by the continuity of $\widehat{u_0}(\xi)$ near $\xi=0$, there exists a small constant $r>0$ such that $\widehat{u_0}(\xi)>0$ for $|\xi|<r$. This fact enables us to obtain the lower bounds for decay rates; see Section~\ref{sec5.4} for the details.
\end{remark}

\begin{remark}
	 It seems not easy to achieve the lower bounds for the decay rates of the orientational field $Q$, because the leading contribution stems from the nonlinear parts.
\end{remark}

\subsection{Difficulties and ideas}
We now explain the aforementioned technical challenges in proving Theorem~\ref{Thm1.3}. First, let us admit that \eqref{eq:1.12} can be shown by time-weighted estimates. Then, when performing the stable mixing estimate of $\partial^{s-1}Q$,
two difficulties arise from certain nonlinear coupling terms,
which we outline below.
\begin{itemize}
	\item Heuristically, the first one is caused by ${\rm div} \left(u\otimes Q\right)$. Indeed, we have
	\begin{align*}
		\left\|\partial^{s-1}Q\right\|_{L^2}\lesssim & \left\|\int_{\frac{t}{2}}^t \check{\mathcal{A}}(t-s) \ast \partial^{s-1}{\rm div}(u\otimes Q)(s) ds \right\|_{L^2} + \mathcal{R}_{\text{remainder}}\\
		\lesssim &  \int_{\frac{t}{2}}^t e^{-a\Gamma(t-s)} (1+t-s)^{-\frac{3}{4}}\| \partial^{s} (u\otimes Q)(s) \|_{L^1\cap L^2} ds + \mathcal{R}_{\text{remainder}} \\
		\lesssim &  \int_{\frac{t}{2}}^t e^{-a\Gamma(t-s)} (1+t-s)^{-\frac{3}{4}}\|u\|_{L^\infty}\| \partial^{s} Q \|_{L^2} ds + \widetilde{\mathcal{R}}_{\text{remainder}}\\
		\lesssim &  \int_{\frac{t}{2}}^t e^{-a\Gamma(t-s)} (1+t-s)^{-\frac{3}{4}}(1+s)^{-\frac{3}{2}-\frac{s}{2}} ds + \widetilde{\mathcal{R}}_{\text{remainder}}\\
		\lesssim  &  (1+s)^{-\frac{3}{4}-\frac{s}{2}} + \widetilde{\mathcal{R}}_{\text{remainder}}
	\end{align*}
	Here we used \eqref{eq:1.12}, i.e., $\|\partial^{s} Q(t) \|_{L^2}\lesssim (1+t)^{-\frac{s}{2}}$ and that $u$ has algebraic time-decay rates in the $L^\infty$ norm.
	Note that we cannot expect the optimal time-decay rate for $\partial^{s}Q$ because of the loss of derivatives.
	Thus, with the above estimate, getting a stable mixing estimate of $\partial^{s-1}Q$ in $L^2$ cannot be expected. To overcome this obstacle, we evaluate the following one instead:
	\begin{align*}
		\Bigg\|\int_{\frac{t}{2}}^t \nabla\check{\mathcal{A}}(t-s) \ast &\partial^k (u\otimes Q)(s) ds \Bigg\|_{L^2}\\
		\lesssim &  \int_{\frac{t}{2}}^t e^{-a\Gamma(t-s)}\left( (1+t-s)^{-\frac{5}{4}}\| \partial^{k} (u\otimes Q)(s) \|_{L^1} + \frac{\| \partial^{k} (u\otimes Q)(s) \|_{L^2}}{\sqrt{t-s}} \right) ds \\
		\lesssim & (1+t)^{-\frac{3}{4}-\frac{k}{2}}e^{-\frac{a\Gamma t}{2}},
	\end{align*}
	which holds for all $k\leq s-1$. It is obvious that stable mixing estimates can be achieved in this way; see Section~\ref{Sec.5} below for the details.
	\item Secondly, there is an additional difficulty caused by the remainder term like $Q\nabla u$. For example, one may have
	\begin{align*}
		\left\| \int_0^{\frac{t}{2}}\partial^k \check{\mathcal{A}}(t-s) \ast (Q\nabla u)(s) ds\right\|_{L^2}
		\lesssim  &  \int_0^{\frac{t}{2}}  e^{-a\Gamma(t-s)} (1+t-s)^{-\frac{3}{4}-\frac{k}{2}} \| (Q\nabla u)(s) \|_{H^k \cap L^1} ds \\
		\lesssim  & \int_0^{\frac{t}{2}}  e^{-\frac{a\Gamma t}{2}} (1+t-s)^{-\frac{3}{4}-\frac{k}{2}} (1+s)^{-\frac{3}{4}}  ds \\
		\lesssim&  (1+t)^{-\frac{1}{2}-\frac{k}{2}}e^{-\frac{a\Gamma t}{2}}.
	\end{align*}
	Obviously, with the current way of performing the $L^2$ decay estimate of $\partial^kQ$, we cannot obtain the desired result similar to \eqref{eq:1.11}. In fact, one way to overcome this is to get the gain of the decay factor $(1+t)^{-\frac{1}{2}}$  
	by exchanging half of the exponential decay, but the stable mixing estimates will be slightly worse than those of the corresponding linearized system. In addition, when building decay estimates in such a way, it should be noted that we only need the initial data bounded in the $L^1$ norm.
	It is enough for us to get
	the energy inequality shown in Proposition~\ref{prop5.2},
	which allows us to justify our result without the smallness of the low-frequency part of $\|(Q_0,u_0)\|_{L^1}$. This is why we are willing to compromise on the decay results in Theorem~\ref{Thm1.3}.
\end{itemize}
	\begin{remark}
		As already emphasized,
		since we do not want to impose the low-frequency assumption on the initial perturbation, it is shown herein that
		the time-decay rates of the global solution $(Q,u)$ are not entirely consistent with that observed from the solution $(Q_L,u_L)$ to linear system \eqref{eq2.1}.
		However, we would like to mention that
		our approach
		is flexible enough, as
		the stable mixing estimates consistent with \eqref{eq1.13}
		can be achieved by a new energy inequality provided that
		the low-frequency assumption on the initial data with respect to $L^1$ is suitably small; see Remark~\ref{rk5.2} for the details.
		%
		
	\end{remark}

\subsection{Organization of this paper}

The rest of this paper is organized as follows. In Section~\ref{Sec.2}, we establish the existence and uniqueness of local-in-time solutions for the active system \eqref{eq1.1} in the framework of Sobolev spaces.
Then, Section~\ref{sec-3} is used to study the global well-posedness of \eqref{eq1.1} for small initial data.
Thereafter, 
we will give the explicit expressions for the Fourier transform of the Green function of the associated linearized systems in Section~\ref{Sec.4}.
In addition, the decay properties of linear system \eqref{eq2.1} are characterized by a delicate analysis on the Green functions.
In Section~\ref{Sec.5}, we present the time-weighted energy estimates for the global classical solution, and derive the upper bounds for the decay rates of $(Q,u)$ as stated in Theorem~\ref{Thm1.3}.
Moreover, Section~\ref{sec5.4} is devoted to the sharp time-decay characterization for the velocity field.
Finally,
we also recall some technical inequalities and provide some important preliminary estimates in Appendix~\ref{AP}.

\subsection{Notations}
For brevity, we use intensively the notation $X\lesssim Y$, which means $X\leq CY$ with some universal positive constant $C$.
Similarly, the notation $X\sim Y$ is used to indicate that there exists a positive constant $C$ such that $C^{-1}Y\leq X\leq CY$.
For a real-valued function $f(x)$ defined in $\R^d$, we denote the partial derivative of order $\alpha$ by
\[
\partial^{\alpha}f(x) = \dfrac{\partial^{|\alpha|} f(x) }{\partial_{x_1}^{\alpha_1}\partial_{x_2}^{\alpha_2}\cdots \partial_{x_d}^{\alpha_d}},
\]
where the multi-index $\alpha=\left(\alpha_1,\alpha_2,\cdots,\alpha_d \right)\in \mathbb{N}^d$ with $|\alpha|=\sum_{j=1}^d \alpha_j$.
For $1\leq p\leq \infty$, we denote by $L^p$ the usual Lebesgue spaces on $\R^3$ with norm $\|\cdot\|_{L^p}$. In addition, $\|\cdot\|_{H^s}$ and $\|\cdot\|_{\dot{H}^s}$ stand
for the norms on the usual Sobolev spaces $H^s(\R^3)$ and homogenous Sobolev spaces $\dot{H}^s(\R^3)$. Moreover,
$\dot{B}_{p,q}^s(\R^3)$ denotes the homogenous Besov spaces with norm $\|\cdot\|_{\dot{B}_{p,q}^s}$.
For the sake of conciseness, we also do not distinguish scalar, vector or tensor valued functions.

Let $\mathbb{M}^{d\times d}$ be the space of all $d\times d$ matrix-valued functions,
the space of symmetric traceless $Q$-tensors in $d$-dimension is defined by
\[
S_0^d:=\left\lbrace  Q\in\mathbb{M}^{d\times d}: Q_{\alpha\beta}=Q_{\beta\alpha},\quad {\rm Tr}(Q)=0,\quad \alpha,\beta=1,2,\cdots,d  \right\rbrace.
\]
For $Q\in \mathcal{S}_0^d$, we denote $|Q|$ by
\[
|Q|:=\sqrt{{\rm Tr}(Q^2)}=\sqrt{Q_{\alpha\beta}Q_{\alpha\beta}}.
\]
As usual, $\left\langle\cdot,\cdot\right\rangle$ denotes the inner product for vector-valued functions in $L^2$. Similarly, the inner product of two matrix-valued functions $A$ and $B$ is given by
\[
\left\langle A:B\right\rangle\overset{\Delta}{=}\int_{\R^d}{\rm Tr}(AB)dx.
\]

For a function $h$ in the Schwartz space $\mathcal{S}(\R^3)$, we can define the Fourier transform of $h$ and its inverse on $\R^3$ as
\[
\hat{h}(\xi)=\mathcal{F}[h]=\int_{\R^3} h(x) e^{-ix\cdot\xi}dx,\quad
\check{h}(x)=\mathcal{F}^{-1}[h]= \frac{1}{(2\pi)^3} \int_{\R^3} h(\xi) e^{ix\cdot\xi}d\xi.
\]
The symbol $\mathbb{P}$ stands for the well-known Leray projector, whose Fourier transform is known as
\[
\widehat{\mathbb{P}f}(\xi)=\left( \mathbb{I}_3-\dfrac{\xi\otimes \xi}{|\xi|^2} \right)\hat{f}(\xi).
\]
Moreover, we also use the following commutators of $A$ and $B$ for any functions $f, g$:
\[
\begin{split}
	[A, B]f &= ABf -BAf,\\
	[A, B]_{\rm div}f &= AB\cdot\nabla f -B\cdot\nabla (A f),\\
	[A, (f,g)]_{-} &= A(fg)-(Af)g - [A,g]f.
\end{split}
\]

\section{Local existence and uniqueness}\label{Sec.2}
The purpose of this section is to show the local well-posedness of system \eqref{eq1.1} with initial data $(Q_0,u_0)\in H^{s+1}\times H^s$ for integer $s\geq 2$.
Similar to \cite{active-limit}, we can construct the following approximation system:
\begin{equation}\label{eq-ap}
	\begin{cases}
		\partial_t Q^{n+1}+ u^n\cdot\nabla Q^{n+1} + Q^n\Omega^{n+1}-\Omega^{n+1} Q^n-\lambda |Q^n|D^{n+1}=\Gamma H^{n+1};\\
		\partial_t u^{n+1} + u^n \cdot\nabla u^{n+1}-\mu \Delta u^{n+1} =-\mathbb{P}\nabla\cdot (\nabla Q^n \odot\nabla Q^n)+\mathbb{P}\nabla\cdot (Q^n\Delta Q^{n+1}-\Delta Q^{n+1}Q^n)\\
		\hspace{5.3cm}-\lambda\mathbb{P}\nabla\cdot(|Q^n|H^{n+1})+\kappa \mathbb{P}\nabla\cdot Q^{n+1};\\
		\nabla\cdot u^{n+1}= {\rm Tr}\,Q^{n+1} = 0,\\
		(Q^{n+1},u^{n+1})(t,x)\mid_{t=0}=(Q_0,u_0)(x),
	\end{cases}
\end{equation}
where
\[
D^{n+1}:=\dfrac{\nabla u^{n+1} + (\nabla u^{n+1})^{T}}{2},\quad \Omega^{n+1}:=\dfrac{\nabla u^{n+1} - (\nabla u^{n+1})^{T}}{2}
\]
and
\[
H^{n+1}:= \Delta Q^{n+1}-aQ^{n+1}+b\left[(Q^n)^2-\dfrac{{\rm Tr}((Q^n)^2)}{3}\mathbb{I}_3\right]-cQ^{n+1}{\rm Tr}((Q^n)^2).
\]

Basically, the above iterative system is a linear system, whose solution can be obtained by the standard method. So, the key point is to find some a priori estimate for system \eqref{eq-ap},
which enables us to get the limit of the solution sequence through standard compactness arguments. Then, the limit is expected to be the solution of system \eqref{eq1.1}.
However,
we have to analyze it in a situation where the requirement of initial data
is weaker than that of \cite{active-limit}. Indeed, this improvement can be achieved by deriving the new commutator estimates for some specific nonlinear terms,
which are included in the following Lemma.


\begin{lemma}\label{new-com}
	Let $\psi\in H^s(\R^3)$ and $\phi,\Phi\in H^{s+1}(\R^3)$ with $s\geq 2$. For all $|k|\leq s$, we have
	\begin{align}
		&\left\|\partial^k(\psi \nabla \phi) -\psi \partial^k\nabla \phi \right\|_{L^2}\leq C \|\psi\|_{H^s}\|\phi\|_{H^{s+1}},\label{eq2.3}\\
		&\left\|\partial^k(\phi \nabla\psi ) -\phi \partial^k\nabla \psi \right\|_{L^2}\leq C \|\psi\|_{H^s}\|\phi\|_{H^{s+1}},\label{eq2.4}\\
		&\left\|\partial^k(\phi \Delta \Phi) -\phi \partial^k\Delta \Phi \right\|_{L^2} \leq C \|\phi\|_{H^{s+1}}\|\Phi\|_{H^{s+1}},\label{eq:2.5}
	\end{align}
	where $C$ is a positive constant depending on $k$ and $s$.
\end{lemma}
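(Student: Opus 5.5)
We will prove all three estimates by the Leibniz expansion and H\"older plus Sobolev embeddings in $\R^3$, treating each term according to how many derivatives it carries. For \eqref{eq2.3}, write
\[
\partial^k(\psi\nabla\phi)-\psi\,\partial^k\nabla\phi=\sum_{0<\beta\leq k}\binom{k}{\beta}\partial^\beta\psi\,\partial^{k-\beta}\nabla\phi .
\]
Every term has at least one derivative on $\psi$, so $|\beta|\geq1$ and $|k-\beta|+1\leq s$. We then split according to $|\beta|$, using $s\geq2$ throughout.
\begin{itemize}
\item If $|\beta|=s$, then $k=\beta$ and the term is $\partial^s\psi\,\nabla\phi$. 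We bound it by $\|\partial^s\psi\|_{L^2}\|\nabla\phi\|_{L^\infty}\lesssim\|\psi\|_{H^s}\|\phi\|_{H^{s+1}}$, using $H^{s}\hookrightarrow L^\infty$ for $\nabla\phi$ (since $s\geq2>3/2$).
\item If $|\beta|=s-1$, we use $\|\partial^\beta\psi\|_{L^3}\|\partial^{k-\beta}\nabla\phi\|_{L^6}$. Here $H^1\hookrightarrow L^3$ controls $\partial^\beta\psi$ by $\|\psi\|_{H^s}$, and $H^1\hookrightarrow L^6$ together with $|k-\beta|+2\leq s+1$ controls the second factor.
\item If $1\leq|\beta|\leq s-2$ (which requires $s\geq3$), then $\partial^\beta\psi\in H^2\hookrightarrow L^\infty$ and $\partial^{k-\beta}\nabla\phi\in L^2$ because $|k-\beta|+1\leq s$.
\end{itemize}
Alternatively, and more uniformly, one may invoke the Kato--Ponce commutator estimate
\[
\|\partial^k(fg)-f\partial^kg\|_{L^2}\lesssim\|\nabla f\|_{L^\infty}\|g\|_{H^{k-1}}+\|f\|_{H^k}\|g\|_{L^\infty}
\]
with $f=\psi$, $g=\nabla\phi$. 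This fails only when $\nabla\psi\notin L^\infty$, which is exactly why we avoid it in the case $s=2$ and use the direct splitting instead.

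Estimate \eqref{eq2.4} is analogous. The remainder is $\sum_{\beta>0}\partial^\beta\phi\,\partial^{k-\beta}\nabla\psi$, where now $\phi$ carries the extra regularity. Since $|k-\beta|+1\leq s$, the factor $\partial^{k-\beta}\nabla\psi$ is at least in $L^2$, and $\partial^\beta\phi$ has at most $s$ derivatives out of $s+1$ available. The splitting is:
\begin{itemize}
\item if $|\beta|\leq s-1$, then $\partial^\beta\phi\in H^2\hookrightarrow L^\infty$ and the other factor goes in $L^2$;
\item if $|\beta|=s$, we take $\partial^s\phi\in H^1\hookrightarrow L^6$ and $\nabla\psi\in H^{s-1}\hookrightarrow L^3$, where the last embedding needs $s-1\geq1$, i.e.\ $H^1\hookrightarrow L^3$.
\end{itemize}

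For \eqref{eq:2.5}, the remainder is $\sum_{\beta>0}\partial^\beta\phi\,\partial^{k-\beta}\Delta\Phi$. Here $\partial^{k-\beta}\Delta\Phi$ has at most $s+1$ derivatives, so it lies in $L^2$ with norm $\lesssim\|\Phi\|_{H^{s+1}}$. The splitting mirrors the previous one, with both $\phi$ and $\Phi$ in $H^{s+1}$:
\begin{itemize}
\item if $|\beta|\leq s-1$, we use $\partial^\beta\phi\in L^\infty$ and put the $\Phi$-factor in $L^2$;
\item if $|\beta|=s$, we use $\partial^s\phi\in L^6$ and $\Delta\Phi\in H^{1}\hookrightarrow L^3$, the latter because $\Delta\Phi\in H^{s-1}$ with $s\geq2$.
\end{itemize}

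The main point of care is the borderline case $s=2$, where no factor with $s-1$ derivatives is bounded. Every term must be checked to find an admissible H\"older pairing among $(L^2,L^\infty)$ and $(L^3,L^6)$ with the available regularity. We will go through the index bookkeeping for each $|\beta|$ explicitly, noting that the constants depend only on $k$ and $s$ through the binomial coefficients and the embedding constants.
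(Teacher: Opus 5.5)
Your proposal is correct and follows essentially the same route as the paper. In both, the commutator is Leibniz-expanded, each term is bounded by H\"older with the pairings $(L^2,L^\infty)$ or $(L^3,L^6)$ together with $H^1\hookrightarrow L^3\cap L^6$ and $H^2\hookrightarrow L^\infty$. For the second and third estimates your pairings are the paper's: $\partial^\beta\phi$ goes in $L^\infty$ at lower order, and $\partial^k\phi\in L^6$ is paired with $\nabla\psi$ or $\Delta\Phi$ in $L^3$ at top order. The only difference is cosmetic and sits in the first estimate. There the paper treats every intermediate term $1\le|\beta|\le|k|-1$ uniformly by $\|\partial^\beta\psi\|_{L^6}\|\partial^{k-\beta}\nabla\phi\|_{L^3}$, and only the term $\beta=k$ by $L^2\times L^\infty$, so it needs no three-way split in $|\beta|$.
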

\begin{proof}
	We will prove these three commutator estimates, respectively.
	Actually, it is sufficient to show the above inequalities hold for all $2\leq |k|\leq s$.
	Hence,
	one can apply the H\"{o}lder inequality and Sobolev embedding to get
	\begin{align*}
		\left\|\partial^k(\psi \nabla \phi) -\psi \partial^k\nabla \phi \right\|_{L^2}&\leq C\left(\sum_{l=1}^{k-1}\left\|  \partial^l \psi \cdot\partial^{k-l}\nabla\phi  \right\|_{L^2} + \left\| \partial^k\psi\cdot \nabla \phi\right\|_{L^2}\right)\\
		& \leq C \left(\sum_{l=1}^{k-1}\left\|  \partial^l \psi \right\|_{L^6}\cdot \left\| \partial^{k-l}\nabla\phi  \right\|_{L^3} + \left\| \partial^k\psi \right\|_{L^2}\cdot \left\|  \nabla \phi\right\|_{L^\infty}\right)\\
		& \leq C \left(\sum_{l=1}^{k-1}\left\|  \partial^{l+1} \psi \right\|_{L^2}\cdot \left\| \partial^{k-l}\nabla\phi  \right\|_{H^1} + \left\| \partial^k\psi \right\|_{L^2}\cdot \left\|  \nabla \phi\right\|_{H^2}\right)\\
		& \leq C \left( \left\|  \psi \right\|_{H^k}\cdot \left\| \nabla\phi  \right\|_{H^k} + \left\| \psi \right\|_{H^k}\cdot \left\|  \nabla \phi\right\|_{H^s}\right),
	\end{align*}
	which is precisely \eqref{eq2.3}.
	
	To prove \eqref{eq2.4}, we only need to make the following modifications:
	\begin{align*}
		\left\|\partial^k(\phi \nabla \psi) -\phi \partial^k\nabla \psi \right\|_{L^2}& \leq C \left(\sum_{l=1}^{k-1}\left\|  \partial^l \phi \right\|_{L^\infty}\cdot \left\| \partial^{k-l}\nabla\psi  \right\|_{L^2}+ \left\|  \partial^k \phi \right\|_{L^6}\cdot \left\| \nabla\psi  \right\|_{L^3} \right)\\
		& \leq C \left(\sum_{l=1}^{k-1}\left\|  \partial^{l} \phi \right\|_{H^2}\cdot \left\| \partial^{k-l}\nabla\psi  \right\|_{L^2} + \left\|  \partial^{k+1} \phi \right\|_{L^2}\cdot \left\| \nabla\psi  \right\|_{H^1} \right).
	\end{align*}
	
	For the last statement \eqref{eq:2.5}, we can proceed as before to get
	\begin{align*}
		\left\|\partial^k(\phi \Delta \Phi) -\phi \partial^k\Delta \Phi \right\|_{L^2}
		& \leq C \left(\sum_{l=1}^{k-1}\left\|  \partial^{l} \phi \right\|_{H^2}\cdot \left\| \partial^{k-l}\Delta \Phi \right\|_{L^2} + \left\|  \partial^{k+1} \phi \right\|_{L^2}\cdot \left\| \Delta \Phi  \right\|_{H^1} \right), 
	\end{align*}
	which completes the proof.
\end{proof}
%
Now, we are in a position to derive some a priori estimates for solutions to the approximation system \eqref{eq-ap}. To do this, we introduce the following energy functionals for any integer $n\geq 0$:
\begin{align*}
	\mathbb{E}_n(t):= &\|u^n(t)\|_{H^s}^2 + \|Q^n(t)\|_{H^{s}}^2 + \|\nabla Q^n(t)\|_{H^{s}}^2,\\
	\mathbb{D}_n(t):= &\mu\|\nabla u^n(t)\|_{H^{s}}^2 + (|a|+1)\Gamma\|Q^n(t)\|_{H^{s+1}}^2 + \Gamma\|\Delta
	Q^n(t)\|_{H^{s}}^2.
\end{align*}
Then, together with Lemma~\ref{new-com}, one can obtain the desired a priori estimate in the following proposition.

\begin{prop}\label{prop2.1}
	Let $s\geq 2$ be an integer and $(Q^{n+1},u^{n+1})(t,x)$ be the solution to the Cauchy problem of system \eqref{eq-ap} with initial data $(Q_0,u_0)\in  H^{s+1} \times H^{s} $ in time interval $[0,T]$. Then, for all $t\in [0,T]$, one has
	\begin{equation}\label{en-loc}
		\dfrac{1}{2}\dfrac{d}{dt}\mathbb{E}_{n+1}(t)+ \mathbb{D}_{n+1}(t)\leq C\mathbb{E}_{n+1}(t) + C\sum_{l=2}^3\mathbb{E}_{n}^{\frac{l}{2}}(t)\mathbb{D}_{n+1}^{\frac{1}{2}}(t) +C\sum_{l=0}^3\mathbb{E}_{n}^{\frac{l}{2}}(t)\mathbb{E}_{n+1}^{\frac{1}{2}}(t)\mathbb{D}_{n+1}^{\frac{1}{2}}(t),
	\end{equation}
	where $C$ is a constant depending only on $s$, $|a|$, $b$, $c$, $\mu$, $\kappa$, $\lambda$ and $\Gamma$.
\end{prop}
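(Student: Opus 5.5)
The plan is a standard $H^s$ energy estimate for the linear iterate system \eqref{eq-ap}, with the frozen coefficients $(Q^n,u^n)$ controlled by $\mathbb{E}_n$. For every multi-index $|k|\le s$ I will apply $\partial^k$ to the $u^{n+1}$-equation and test with $\partial^k u^{n+1}$. I will apply $\partial^k$ to the $Q^{n+1}$-equation and test with $\partial^k Q^{n+1}$, and also with $-\partial^k\Delta Q^{n+1}$; the latter produces $\frac12\frac{d}{dt}\|\partial^k\nabla Q^{n+1}\|^2$. Summing over $k$ gives $\frac12\frac{d}{dt}\mathbb{E}_{n+1}$.

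The principal linear parts yield the dissipation. From the velocity equation, $-\mu\Delta u^{n+1}$ gives $\mu\|\nabla u^{n+1}\|_{H^s}^2$. From the $Q$-equation, $\Gamma\Delta Q^{n+1}-a\Gamma Q^{n+1}$ gives $\Gamma\|\nabla Q^{n+1}\|_{H^s}^2+\Gamma\|\Delta Q^{n+1}\|_{H^s}^2$ plus terms $\pm a\Gamma(\cdots)$. These are enough to produce $\mathbb{D}_{n+1}$ after adding and subtracting $(|a|+1)\Gamma\|Q^{n+1}\|_{H^{s+1}}^2$, since the surplus is bounded by $C\mathbb{E}_{n+1}$. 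The active term $\kappa\mathbb{P}\nabla\cdot Q^{n+1}$ is also linear in the unknown. I will bound it by $\|\nabla Q^{n+1}\|_{H^s}\|u^{n+1}\|_{H^s}\le C\mathbb{E}_{n+1}$, and this is the source of the term $C\mathbb{E}_{n+1}$ on the right-hand side.

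The main difficulty is the top-order coupling between $Q^n\Omega^{n+1}-\Omega^{n+1}Q^n$ (tested against $-\partial^k\Delta Q^{n+1}$) and $\mathbb{P}\nabla\cdot(Q^n\Delta Q^{n+1}-\Delta Q^{n+1}Q^n)$ (tested against $\partial^k u^{n+1}$). Each of these alone carries one derivative too many. I will write
\[
\partial^k(Q^n\Delta Q^{n+1})=Q^n\partial^k\Delta Q^{n+1}+\big[\partial^k(Q^n\Delta Q^{n+1})-Q^n\partial^k\Delta Q^{n+1}\big],
\]
and similarly $\partial^k(Q^n\nabla u^{n+1})=Q^n\partial^k\nabla u^{n+1}+[\cdots]$. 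After integrating by parts, the leading pieces cancel exactly, using the symmetry of $Q^n,\Delta Q^{n+1}$ and the antisymmetry of $\Omega$; the Leray projector drops because $\nabla\cdot u^{n+1}=0$. The same cancellation occurs between $\lambda|Q^n|D^{n+1}$ and $-\lambda\mathbb{P}\nabla\cdot(|Q^n|\Delta Q^{n+1})$, where the leading part of $H^{n+1}$ is $\Delta Q^{n+1}$.

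The commutator remainders are handled by Lemma~\ref{new-com}. Estimate \eqref{eq:2.5} bounds $\|[\partial^k,Q^n]\Delta Q^{n+1}\|_{L^2}$ by $\|Q^n\|_{H^{s+1}}\|Q^{n+1}\|_{H^{s+1}}$, and this is paired with $\|\nabla u^{n+1}\|_{H^s}$. Estimate \eqref{eq2.4} bounds $\|[\partial^k,Q^n]\nabla u^{n+1}\|_{L^2}$ by $\|Q^n\|_{H^{s+1}}\|u^{n+1}\|_{H^s}$, and this is paired with $\|\Delta Q^{n+1}\|_{H^s}$. For $|Q^n|$ I will use Lemma~\ref{lem-A6}, which gives $\||Q^n|\|_{H^{s+1}}\lesssim\|Q^n\|_{H^{s+1}}$. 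All these terms are therefore of the form $\mathbb{E}_n^{1/2}\mathbb{E}_{n+1}^{1/2}\mathbb{D}_{n+1}^{1/2}$.

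The transport terms $u^n\cdot\nabla Q^{n+1}$ and $u^n\cdot\nabla u^{n+1}$ are treated by \eqref{eq2.3}/\eqref{eq2.4} together with $\nabla\cdot u^n=0$, which removes the top-order part. The remaining lower-order nonlinearities in $H^{n+1}$ are $b[(Q^n)^2-\cdots]$ and $cQ^{n+1}{\rm Tr}((Q^n)^2)$. Using the algebra property of $H^{s}$ ($s\ge2$) and Moser-type product estimates, they give terms $\mathbb{E}_n\mathbb{D}_{n+1}^{1/2}$ and $\mathbb{E}_n\mathbb{E}_{n+1}^{1/2}\mathbb{D}_{n+1}^{1/2}$. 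When such a term is multiplied by $\lambda|Q^n|$ or $Q^n$, it produces the cubic factors $\mathbb{E}_n^{3/2}$.

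The forcing $\mathbb{P}\nabla\cdot(\nabla Q^n\odot\nabla Q^n)$ involves no unknown. I will bound it after integrating by parts by $\|\nabla Q^n\|_{H^s}^2\|\nabla u^{n+1}\|_{H^s}\lesssim\mathbb{E}_n\mathbb{D}_{n+1}^{1/2}$. Collecting all contributions gives \eqref{en-loc}, with the $\mathbb{E}_n^{l/2}\mathbb{D}_{n+1}^{1/2}$ terms for $l=2,3$ coming from the forcing and from $b(Q^n)^2$ (times $\lambda|Q^n|$).
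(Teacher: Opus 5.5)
Your proposal follows the paper's proof essentially step for step: the same multipliers $\partial^k u^{n+1}$ and $\partial^k Q^{n+1}-\partial^k\Delta Q^{n+1}$, the same exact cancellation of the top-order pairs ($Q^n\Omega^{n+1}-\Omega^{n+1}Q^n$ against $Q^n\Delta Q^{n+1}-\Delta Q^{n+1}Q^n$, and $|Q^n|D^{n+1}$ against $|Q^n|\Delta Q^{n+1}$), and the same control of the commutator remainders by Lemma~\ref{new-com} and Lemma~\ref{lem-A6}; one small inaccuracy is that the transport piece $\langle u^n\cdot\nabla\partial^k Q^{n+1},\partial^k\Delta Q^{n+1}\rangle$ is not removed by $\nabla\cdot u^n=0$, but it is harmless because it is bounded directly by $\|u^n\|_{L^\infty}\|\partial^k\nabla Q^{n+1}\|_{L^2}\|\partial^k\Delta Q^{n+1}\|_{L^2}\lesssim \mathbb{E}_n^{1/2}\mathbb{E}_{n+1}^{1/2}\mathbb{D}_{n+1}^{1/2}$, exactly as the paper does. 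Be aware, though, that the bound $\||Q^n|\|_{H^{s+1}}\lesssim\|Q^n\|_{H^{s+1}}$, which both you and the paper need for the $|Q^n|$-commutators, is false once the Sobolev index is at least $2$ (take $Q=\chi(x)\,x_1A$ with $A\in S_0^3$ fixed and $\chi$ a cutoff equal to $1$ near the origin: there $|Q|=|A|\,|x_1|$, so $\partial_1^2|Q|$ is a surface measure and $|Q|\notin H^2$), hence this step is only as sound as Lemma~\ref{lem-A6}, whose proof illegitimately pulls the bounded factor $\partial\mathcal{Q}/\partial Q_{\alpha\beta}$ out of the Littlewood--Paley block.
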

\begin{proof}
	\begin{enumerate}[(1).]
		\item The first thing is to show the basic $L^2$-energy estimate, which will indicate the possible structure of the higher order estimate.
		As usual, we take the summation of the first equation in \eqref{eq-ap} multiplied by $Q^{n+1}-\Delta Q^{n+1}$ and the second equation in \eqref{eq-ap} multiplied by $u^{n+1}$, take the trace, and then integrate by parts over $\R^3$ to have
		\begin{align*}\label{eq2.4}
			&\dfrac{1}{2}\dfrac{d}{dt}\left( \|u^{n+1}\|_{L^2}^2 + \|Q^{n+1}\|_{L^2}^2 + \|\nabla Q^{n+1}\|_{L^2}^2 \right) +\mu\|\nabla u^{n+1}\|_{L^2}^2  + (|a|+1)\Gamma\|Q^{n+1}\|_{H^{1}}^2  + \Gamma\|\Delta
			Q^{n+1}\|_{L^2}^2\\
			=&(\eta+1)\Gamma\|Q^{n+1}\|_{L^2}^2 + \eta\Gamma\|\nabla Q^{n+1}\|_{L^2}^2 \underbrace{-\left\langle  u^n\cdot\nabla Q^{n+1}:Q^{n+1}  \right\rangle  -\left\langle  u^n\cdot\nabla u^{n+1},u^{n+1}  \right\rangle}_{I_1} \\
			&+\left\langle  u^n\cdot\nabla Q^{n+1}:\Delta Q^{n+1}  \right\rangle + \left\langle  \Omega^{n+1} Q^n - Q^n\Omega^{n+1}:  Q^{n+1}  \right\rangle  + \left\langle  \nabla Q^n \odot\nabla Q^n:\nabla u^{n+1}  \right\rangle\\
			&\underbrace{-\left\langle  \Omega^{n+1} Q^n - Q^n\Omega^{n+1}:\Delta Q^{n+1}  \right\rangle}_{I_2}  \underbrace{- \lambda \left\langle  |Q^n|D^{n+1}:\Delta Q^{n+1}  \right\rangle +\lambda \left\langle  |Q^n|\Delta Q^{n+1}:\nabla u^{n+1}  \right\rangle}_{I_3} \\
			& + \lambda \left\langle  |Q^n|D^{n+1}: Q^{n+1}  \right\rangle-a\lambda \left\langle  |Q^n| Q^{n+1}:\nabla u^{n+1}   \right\rangle \underbrace{-  \left\langle Q^n\Delta Q^{n+1}-\Delta Q^{n+1}Q^n :\nabla u^{n+1}   \right\rangle}_{I_4} \\
			&- \kappa \left\langle  Q^{n+1}:\nabla u^{n+1}   \right\rangle +\Gamma  \left\langle  \left\lbrace b\left[(Q^n)^2-\dfrac{{\rm Tr}((Q^n)^2)}{3}\mathbb{I}_3\right]-cQ^{n+1}{\rm Tr}((Q^n)^2) \right\rbrace:Q^{n+1} -\Delta Q^{n+1}  \right\rangle\\
			&+ \lambda\left\langle |Q^n| \left\lbrace b\left[(Q^n)^2-\dfrac{{\rm Tr}((Q^n)^2)}{3}\mathbb{I}_3\right]-cQ^{n+1}{\rm Tr}((Q^n)^2) \right\rbrace:\nabla u^{n+1}  \right\rangle,
		\end{align*}
		where the constant $\eta$ is given by
		\[
		\eta=
		\begin{cases}
			0, & \text{if $a\geq 0$,}\\
			2|a|, & \text{if $a< 0$.}\\
		\end{cases}
		\]
		Recall that the cancellation rules stated in Lemma~\ref{lem.cancel} indicate that $I_1=I_2+I_4=I_3=0$. Thus, the above equation can be simplified as
		\begin{align}\label{eq2.7}
			&\dfrac{1}{2}\dfrac{d}{dt}\left( \|u^{n+1}\|_{L^2}^2 + \|Q^{n+1}\|_{L^2}^2 + \|\nabla Q^{n+1}\|_{L^2}^2 \right) +\mu\|\nabla u^{n+1}\|_{L^2}^2  + (|a|+1)\Gamma\|Q^{n+1}\|_{H^{1}}^2  + \Gamma\|\Delta
			Q^{n+1}\|_{L^2}^2\nonumber\\
			=&(\eta+1)\Gamma\|Q^{n+1}\|_{L^2}^2 + \eta\Gamma\|\nabla Q^{n+1}\|_{L^2}^2  +\left\langle  u^n\cdot\nabla Q^{n+1}:\Delta Q^{n+1}  \right\rangle + \lambda \left\langle  |Q^n|D^{n+1}: Q^{n+1}  \right\rangle\nonumber\\
			& + \left\langle  \nabla Q^n \odot\nabla Q^n:\nabla u^{n+1}  \right\rangle -a\lambda \left\langle  |Q^n| Q^{n+1}:\nabla u^{n+1}   \right\rangle + \left\langle  \Omega^{n+1} Q^n - Q^n\Omega^{n+1}:  Q^{n+1}  \right\rangle \\
			& - \kappa \left\langle  Q^{n+1}:\nabla u^{n+1}   \right\rangle  + \lambda\left\langle |Q^n| \left\lbrace b\left[(Q^n)^2-\dfrac{{\rm Tr}((Q^n)^2)}{3}\mathbb{I}_3\right]-cQ^{n+1}{\rm Tr}((Q^n)^2) \right\rbrace:\nabla u^{n+1}  \right\rangle\nonumber \\
			& +\Gamma  \left\langle  \left\lbrace b\left[(Q^n)^2-\dfrac{{\rm Tr}((Q^n)^2)}{3}\mathbb{I}_3\right]-cQ^{n+1}{\rm Tr}((Q^n)^2) \right\rbrace:Q^{n+1} -\Delta Q^{n+1}  \right\rangle\nonumber\\
			\overset{\bigtriangleup}{=}& (\eta+1)\Gamma\|Q^{n+1}\|_{L^2}^2 + \eta\Gamma\|\nabla Q^{n+1}\|_{L^2}^2 + \sum_{i=1}^{8} J_i.\nonumber
		\end{align}
		Next, we estimate the right-hand side of \eqref{eq2.7} term by term. By the use of H\"{o}lder inequality, Gagliardo-Nirenberg inequality and Sobolev embeddings, we have
		\begin{equation}
			\begin{split}
				|J_1|+|J_2| +|J_4|+ |J_5|&\lesssim \|u^n\|_{L^6}\|\nabla Q^{n+1}\|_{L^3}\|\Delta Q^{n+1}\|_{L^2} + \| \nabla u^{n+1}\|_{L^2}\| Q^{n}\|_{L^3}\| Q^{n+1}\|_{L^6}\\
				&\lesssim \|\nabla u^n\|_{L^2}\|\nabla Q^{n+1}\|_{H^1}\|\Delta Q^{n+1}\|_{L^2} + \|\nabla u^{n+1}\|_{L^2}\| Q^{n}\|_{H^1}\| \nabla Q^{n+1}\|_{L^2}\\
				&\leq C\mathbb{E}_n^{\frac{1}{2}}(t) \mathbb{E}_{n+1}^{\frac{1}{2}}(t)\mathbb{D}_{n+1}^{\frac{1}{2}}(t)\\
				|J_3|+ |J_6|&\lesssim \|\nabla Q^{n}\|_{L^4}^2\|\nabla u^{n+1}\|_{L^2} + \| Q^{n+1}\|_{L^2}\|\nabla u^{n+1}\|_{L^2}\\
				&\lesssim \|\nabla Q^{n}\|_{H^1}^2\|\nabla u^{n+1}\|_{L^2} + \mathbb{E}_{n+1}^{\frac{1}{2}}(t)\mathbb{D}_{n+1}^{\frac{1}{2}}(t)\\
				&\leq C\left( \mathbb{E}_{n}(t) + \mathbb{E}_{n+1}^{\frac{1}{2}}(t)\right) \mathbb{D}_{n+1}^{\frac{1}{2}}(t)\\
				|J_{7}|&\leq C \|\nabla u^{n+1}\|_{L^2}\|Q^n\|_{L^\infty} \left(\|Q^n\|_{L^4}^2+ \|Q^n\|_{L^6}^2 \|Q^{n+1}\|_{L^6} \right)\\
				&\leq C\|Q^n\|_{H^2}\left( \|Q^n\|_{H^1}^2+ \|\nabla Q^n\|_{L^2}^2\|\nabla Q^{n+1}\|_{L^2} \right)\mathbb{D}_{n+1}^{\frac{1}{2}}(t)\\
				&\leq C\left( 1 +\mathbb{E}_{n+1}^{\frac{1}{2}}(t)\right)\mathbb{E}_{n}^{\frac{3}{2}}(t)\mathbb{D}_{n+1}^{\frac{1}{2}}(t)\\
				|J_8|&\leq C  \left( \|Q^n\|_{L^4}^2+ \|Q^n\|_{L^6}^2 \|Q^{n+1}\|_{L^6}  \right)\cdot \left( \|Q^{n+1}\|_{L^2}+ \|\Delta Q^{n+1}\|_{L^2}  \right) \\
				&\leq C\left( \|Q^n\|_{H^1}^2+ \|\nabla Q^n\|_{L^2}^2\|\nabla Q^{n+1}\|_{L^2}  \right)\mathbb{D}_{n+1}^{\frac{1}{2}}(t)\\
				&\leq C\left( 1 +\mathbb{E}_{n+1}^{\frac{1}{2}}(t)\right)\mathbb{E}_{n}(t)\mathbb{D}_{n+1}^{\frac{1}{2}}(t)
			\end{split}
		\end{equation}
		Substituting these results into \eqref{eq2.7}, we infer that
		\begin{equation}\label{eq:0-th}
			\begin{split}
				\dfrac{1}{2}\dfrac{d}{dt}\Big( \|u^{n+1}\|_{L^2}^2 +& \|Q^{n+1}\|_{L^2}^2 + \|\nabla Q^{n+1}\|_{L^2}^2 \Big)\\
				&+\mu\|\nabla u^{n+1}\|_{L^2}^2  + (|a|+1)\Gamma\|Q^{n+1}(t)\|_{H^{1}}^2  + \Gamma\|\Delta
				Q^{n+1}\|_{L^2}^2\\
				\leq& C\mathbb{E}_{n+1}(t) + C\sum_{l=2}^3\mathbb{E}_{n}^{\frac{l}{2}}(t)\mathbb{D}_{n+1}^{\frac{1}{2}}(t) +C\sum_{l=0}^3\mathbb{E}_{n}^{\frac{l}{2}}(t)\mathbb{E}_{n+1}^{\frac{1}{2}}(t)\mathbb{D}_{n+1}^{\frac{1}{2}}(t).
			\end{split}
		\end{equation}
		\item In this step, we utilize the commutator estimates established in Lemma~\ref{new-com} to show the higher order estimate of system \eqref{eq-ap}.
		For any $1\leq |k|\leq s$,
		applying $\partial^k$ to the \eqref{eq-ap}$_2$ and multiplying the resulting equation by $\partial^ku^{n+1}$, meanwhile,
		applying $\partial^k$ to the \eqref{eq-ap}$_1$, and multiplying the resulting equation by $\partial^kQ^{n+1}-\partial^k\Delta Q^{n+1}$,
		taking the trace, and then summing up all the results to get
		\begin{align*}
			\dfrac{1}{2}\dfrac{d}{dt}\Big( &\|\partial^ku^{n+1}\|_{L^2}^2 + \|\partial^kQ^{n+1}\|_{L^2}^2 + \|\partial^k\nabla Q^{n+1}\|_{L^2}^2 \Big)\\
			&+\mu\|\partial^{k}\nabla u^{n+1}\|_{L^2}^2 + (|a|+1)\Gamma \|\partial^{k} Q^{n+1}\|_{H^1}^2+ \Gamma\|\partial^k\Delta
			Q^{n+1}\|_{L^2}^2\\
			=& (\eta+1)\Gamma\|\partial^k Q^{n+1}\|_{L^2}^2 + \eta\Gamma\|\partial^k\nabla Q^{n+1}\|_{L^2}^2 +\left\langle u^n\cdot\nabla\partial^k Q^{n+1}:\partial^k\Delta
			Q^{n+1}  \right\rangle   \\
			& + \left\langle [\partial^k,u^n]_{\rm div} Q^{n+1}:\partial^k\Delta
			Q^{n+1}-\partial^kQ^{n+1}  \right\rangle +\left\langle \partial^k (u^n\otimes u^{n+1}):\partial^k\nabla u^{n+1} \right\rangle \\
			&+\left\langle \partial^k\Omega^{n+1} Q^n -  Q^n\partial^k\Omega^{n+1}:\partial^k Q^{n+1} \right\rangle +\left\langle  [\partial^k,( \Omega^{n+1}, Q^n)]_{-}:\partial^k Q^{n+1}-\partial^k\Delta
			Q^{n+1} \right\rangle\\
			& +  \left\langle  \partial^k\left( \nabla Q^n \odot\nabla Q^n\right):\partial^{k}\nabla u^{n+1}  \right\rangle  +\left\langle  [\partial^k,( \Delta Q^{n+1}, Q^n)]_{-}:\partial^{k}\nabla u^{n+1} \right\rangle\\
			& - \kappa \left\langle  \partial^kQ^{n+1}:\partial^k\nabla u^{n+1}   \right\rangle  + \lambda \left\langle \partial^k\left(  |Q^n|D^{n+1}\right):\partial^k Q^{n+1}  \right\rangle  - \lambda \left\langle [\partial^k, |Q^n|]D^{n+1}:\partial^k\Delta Q^{n+1}  \right\rangle \\
			&+\lambda \left\langle [\partial^k, |Q^n|]\Delta Q^{n+1} :\partial^k\nabla u^{n+1}  \right\rangle -a\lambda \left\langle \partial^k\left(  |Q^n|Q^{n+1}\right):\partial^k \nabla u^{n+1}  \right\rangle \\
			& +\Gamma  \left\langle  \partial^k\left\lbrace b\left[(Q^n)^2-\dfrac{{\rm Tr}((Q^n)^2)}{3}\mathbb{I}_3\right]-cQ^{n+1}{\rm Tr}((Q^n)^2) \right\rbrace:\partial^kQ^{n+1} -\partial^k\Delta Q^{n+1}  \right\rangle \\
			&+ \lambda\left\langle \partial^k\left(|Q^n| \left\lbrace b\left[(Q^n)^2-\dfrac{{\rm Tr}((Q^n)^2)}{3}\mathbb{I}_3\right]-cQ^{n+1}{\rm Tr}((Q^n)^2) \right\rbrace\right):\partial^k\nabla u^{n+1}  \right\rangle\\
			\overset{\bigtriangleup}{=}& (\eta+1)\Gamma\|\partial^k Q^{n+1}\|_{L^2}^2 + \eta\Gamma\|\partial^k\nabla Q^{n+1}\|_{L^2}^2 + \sum_{j=1}^{14} K_j,
		\end{align*}
		where we have used the following cancellation rules:
		\[
		\begin{split}
			\left\langle Q^n \partial^k\Omega^{n+1} - \partial^k\Omega^{n+1} Q^n:\partial^k\Delta Q^{n+1}\right\rangle -\left\langle Q^n\partial^k \Delta Q^{n+1} - \partial^k \Delta Q^{n+1} Q^n: \partial^k\nabla u^{n+1} \right\rangle =&0,\\
			\left\langle | Q^n| \partial^k\Delta Q^{n+1}: \partial^k\nabla u^{n+1}\right\rangle-\left\langle|Q^n|\partial^k D^{n+1}: \partial^k\Delta Q^{n+1}\right\rangle=&0.
		\end{split}
		\]
		Similarly, we will estimate $K_j$ term by term. Indeed, one can get
		\begin{align*}
			|K_1| + |K_4| \lesssim &  \|u^{n}\|_{L^\infty} \| \nabla\partial^{k}  Q^{n+1}\|_{L^2} \|\partial^{k} Q^{n+1}\|_{H^2} +  \|Q^{n}\|_{L^\infty} \| \nabla\partial^{k}  u^{n+1}\|_{L^2} \|\partial^{k}   Q^{n+1}\|_{L^2} \\
			\lesssim & \mathbb{E}_n^{\frac{1}{2}}(t) \mathbb{E}_{n+1}^{\frac{1}{2}}(t)\mathbb{D}_{n+1}^{\frac{1}{2}}(t),\\
			|K_2|+ |K_3|\lesssim&   \| u^{n}\|_{H^s}\| Q^{n+1}\|_{H^{s+1}}\| \partial^k Q^{n+1}\|_{H^2} + \| u^{n}\|_{L^\infty} \|\partial^{k} u^{n+1}\|_{L^2}\|\partial^k\nabla u^{n+1}\|_{L^2}   \\
			&  + \| u^{n+1}\|_{L^\infty} \|\partial^{k} u^{n}\|_{L^2}\|\partial^k\nabla u^{n+1}\|_{L^2} \\
			\lesssim&  \mathbb{E}_n^{\frac{1}{2}}(t) \mathbb{E}_{n+1}^{\frac{1}{2}}(t)\mathbb{D}_{n+1}^{\frac{1}{2}}(t),\\
			|K_5|+ |K_7|+|K_{10}|+ |K_{11}| \lesssim&  \| Q^{n}\|_{H^{s+1}}\| u^{n+1}\|_{H^s}\| \partial^k Q^{n+1}\|_{H^2} + \| Q^{n}\|_{H^{s+1}}\| Q^{n+1}\|_{H^{s+1}}\|\partial^k\nabla u^{n+1}\|_{L^2}  \\
			\lesssim&  \mathbb{E}_n^{\frac{1}{2}}(t) \mathbb{E}_{n+1}^{\frac{1}{2}}(t)\mathbb{D}_{n+1}^{\frac{1}{2}}(t),\\
			|K_6|+ |K_8|\lesssim &   \|\partial^{k}\nabla Q^n \|_{L^2} \|\nabla Q^n \|_{L^\infty} \|\partial^{k}\nabla u^{n+1}  \|_{L^2} +\|  \partial^kQ^{n+1}\|_{L^2}\|\partial^k\nabla u^{n+1}  \|_{L^2}\\
			\lesssim & \|  \nabla Q^n \|_{H^{s}} \|\nabla Q^n \|_{H^{2}} \|\nabla u^{n+1}  \|_{H^s} + \| Q^{n+1}\|_{H^s}\| \nabla u^{n+1}  \|_{H^s}\\
			\lesssim&  \mathbb{E}_n(t)\mathbb{D}_{n+1}^{\frac{1}{2}}(t)+  \mathbb{E}_{n+1}^{\frac{1}{2}}(t)\mathbb{D}_{n+1}^{\frac{1}{2}}(t),\\
			|K_9| \lesssim  & \left(  \| Q^n \|_{L^\infty}  \|\partial^{k}\nabla u^{n+1} \|_{L^2} +  \| \nabla u^{n+1} \|_{L^\infty}  \|\partial^{k}  |Q^n| \|_{L^2}  \right) \|\partial^k Q^{n+1} \|_{L^2} \\
			\lesssim &  \left(  \| Q^n \|_{H^{2}}  \| \nabla u^{n+1} \|_{H^s} +  \| \nabla u^{n+1} \|_{H^2}  \| Q^n \|_{H^s}  \right)  \| Q^{n+1} \|_{H^s} \\
			\lesssim & \mathbb{E}_n^{\frac{1}{2}}(t) \mathbb{E}_{n+1}^{\frac{1}{2}}(t)\mathbb{D}_{n+1}^{\frac{1}{2}}(t),\\
			|K_{13}|  \lesssim& \mathbb{D}_{n+1}^{\frac{1}{2}}(t)\cdot \| Q^n\|_{L^\infty}\left( \left(1+  \| Q^{n+1}\|_{L^\infty}\right)\|\partial^k Q^n \|_{L^2} + \| Q^n\|_{L^\infty}\|\partial^k Q^{n+1} \|_{L^2} \right)\\
			\lesssim & \left( \left(1+   \| Q^{n+1}\|_{H^2}\right) \| Q^n\|_{H^s} +\| Q^{n}\|_{H^2} \| Q^{n+1}\|_{H^s}\right)\mathbb{E}_n^{\frac{1}{2}}(t) \mathbb{D}_{n+1}^{\frac{1}{2}}(t) \\
			\lesssim& \left( 1+ \mathbb{E}_{n+1}^{\frac{1}{2}}(t)\right)\mathbb{E}_n(t) \mathbb{D}_{n+1}^{\frac{1}{2}}(t),\\
			|K_{12}| + |K_{14}| \lesssim&\|\partial^k\nabla u^{n+1}  \|_{L^2}\Big(\|Q^n\|_{L^\infty}\|  \partial^k Q^{n+1}\|_{L^2} + \|Q^{n+1}\|_{L^\infty}\|  \partial^k |Q^n|\|_{L^2}\\
			&+ \|Q^n\|_{L^\infty}^2\| \left( \partial^k Q^n\|_{L^2} +  \partial^k |Q^n|\|_{L^2}\right)  + \|Q^{n}\|_{L^\infty}^2\|Q^{n+1}\|_{L^\infty}\|  \partial^k |Q^n|\|_{L^2}\\
			& + \|Q^n\|_{L^\infty}^2\|Q^{n+1}\|_{L^\infty}\|  \partial^k Q^n\|_{L^2}  + \|Q^{n}\|_{L^\infty}^3 \|  \partial^k Q^{n+1}\|_{L^2} \Big)\\
			\lesssim & \mathbb{D}_{n+1}^{\frac{1}{2}}(t)\cdot \left( \| Q^n\|_{H^s} \| Q^{n+1}\|_{H^s}+  \| Q^n\|_{H^s}^3 + \| Q^n\|_{H^s}^3 \|Q^{n+1}\|_{H^s}\right) \\
			\lesssim& \mathbb{E}_n^{\frac{1}{2}}(t) \mathbb{E}_{n+1}^{\frac{1}{2}}(t)\mathbb{D}_{n+1}^{\frac{1}{2}}(t) + \left( 1+\mathbb{E}_{n+1}^{\frac{1}{2}}(t)\right) \mathbb{E}_{n}^{\frac{3}{2}}(t)\mathbb{D}_{n+1}^{\frac{1}{2}}(t),
		\end{align*}
		where we have also applied Lemma~\ref{new-com}, Lemma~\ref{lem-a1}, Lemma~\ref{lem-A6} and $\|f\|_{L^\infty(\R^3)}\lesssim \|f\|_{H^{s}(\R^3)}$ ($s\geq 2$).
		In conclusion, we get
		\begin{equation}\label{eq:k-th}
			\begin{split}
				\dfrac{1}{2}\dfrac{d}{dt}\Big( &\|\partial^ku^{n+1}\|_{L^2}^2 + \|\partial^kQ^{n+1}\|_{L^2}^2 + \|\partial^k\nabla Q^{n+1}\|_{L^2}^2 \Big)\\
				&+\mu\|\partial^{k}\nabla u^{n+1}\|_{L^2}^2 + (|a|+1)\Gamma \|\partial^{k} Q^{n+1}\|_{H^1}^2+ \Gamma\|\partial^k\Delta
				Q^{n+1}\|_{L^2}^2\\
				\lesssim&  \mathbb{E}_{n+1}(t) + \sum_{l=2}^3\mathbb{E}_{n}^{\frac{l}{2}}(t)\mathbb{D}_{n+1}^{\frac{1}{2}}(t) +\sum_{l=0}^3\mathbb{E}_{n}^{\frac{l}{2}}(t)\mathbb{E}_{n+1}^{\frac{1}{2}}(t)\mathbb{D}_{n+1}^{\frac{1}{2}}(t).
			\end{split}
		\end{equation}
		\item Finally, summing up \eqref{eq:0-th} and \eqref{eq:k-th} for all $1\leq |k|\leq s$, we deduce that
		\[
		\dfrac{1}{2}\dfrac{d}{dt}\mathbb{E}_{n+1}(t)+ \mathbb{D}_{n+1}(t)\leq C\mathbb{E}_{n+1}(t) + C\sum_{l=2}^3\mathbb{E}_{n}^{\frac{l}{2}}(t)\mathbb{D}_{n+1}^{\frac{1}{2}}(t) +C\sum_{l=0}^3\mathbb{E}_{n}^{\frac{l}{2}}(t)\mathbb{E}_{n+1}^{\frac{1}{2}}(t)\mathbb{D}_{n+1}^{\frac{1}{2}}(t).
		\]
	\end{enumerate}
	This completes the proof.
\end{proof}
\begin{remark}\label{re2.2}
	It should be noted that we applied new commutator estimates ‌instead‌ of the Moser-type calculus inequalities, while performing the high order energy estimates for $K_2$, $K_5$, $K_7$, $K_{10}$ and $K_{11}$. This differs from our global analysis below and allows us to address the above a priori estimate for \eqref{eq-ap} under a weaker assumption on initial data than that in \cite{active-limit}.
\end{remark}

Thanks to the a priori estimate \eqref{en-loc},
we are able to prove Theorem~\ref{thm1.1} concerning
the local well-posedness of system \eqref{eq1.1} for arbitrary initial data $(Q_0,u_0)\in H^{s+1} \times H^{s}$ with $s\geq 2$.

\noindent\textbf{Proof of Theorem\ref{thm1.1}.}

As usual, the proof consists of three steps:
\begin{enumerate}[(1).]
	\item In the first step, we will show the local-in-time existence of solutions to system \eqref{eq1.1}.
	In fact, it is easy to check that
	\begin{align*}
		\sum_{l=0}^3\mathbb{E}_n^{\frac{l}{2}}(t) \mathbb{E}_{n+1}^{\frac{1}{2}}(t)\mathbb{D}_{n+1}^{\frac{1}{2}}(t) \leq& 12C \sum_{l=0}^3\mathbb{E}_n^{l}(t)\mathbb{E}_{n+1}(t)+\dfrac{1}{3C} \mathbb{D}_{n+1}(t),\\
		\sum_{l=2}^3\mathbb{E}_n^{\frac{l}{2}}(t) \mathbb{D}_{n+1}^{\frac{1}{2}}(t)\leq& 12C \sum_{l=2}^3\mathbb{E}_n^{l}(t)+\dfrac{1}{6C} \mathbb{D}_{n+1}(t),
	\end{align*}
	where $C$ is chosen to be the same as in Proposition~\ref{prop2.1}.
	Then, substituting these results into \eqref{en-loc}, one can get
	\begin{equation}\label{eq2.11}
		\dfrac{d}{dt}\mathbb{E}_{n+1}(t)+ \mathbb{D}_{n+1}(t)
		\leq \widetilde{C}\sum_{l=0}^3\mathbb{E}_n^{l}(t)\mathbb{E}_{n+1}(t) + \widetilde{C} \sum_{l=2}^3\mathbb{E}_n^{l}(t),
	\end{equation}
	where $\widetilde{C}:= 2C(12C+1)$.
	Next, we set
	\[
	(Q^0,u^0)(t,x)=(Q_0,u_0)(x), \quad \forall t\in [0,T],
	\]
	and prove the following statement by induction
	\begin{equation}\label{eq2.12}
		\sup_{t\in [0,T]} \mathbb{E}_{n}(t)\leq 4E_0,\quad \forall n\geq 0.
	\end{equation}
	Observe that for $n=0$, the claim follows from the definition directly. Hence, by assuming that \eqref{eq2.12} holds for $n$, we aim to show that it is again true for $n+1$.
	%
	Indeed, one can apply the Gr\"{o}nwall's inequality to get
	\[
		\begin{split}
			\mathbb{E}_{n+1}(t)+ \int_0^t\mathbb{D}_{n+1}(\tau)d\tau &\leq \left( \mathbb{E}_{n+1}(0) + \widetilde{C} \sum_{l=2}^3\int_0^t\mathbb{E}_n^{l}(\tau) d\tau \right)
			\exp\left\lbrace \widetilde{C}\sum_{l=0}^3\int_0^t\mathbb{E}_n^{l}(\tau) d\tau  \right\rbrace\\
			&\leq \left( E_0 + \sum_{l=2}^3 4^{l}\widetilde{C}E_0^{l}t  \right)
			\exp\left\lbrace \sum_{l=0}^3 4^{l}\widetilde{C}E_0^{l}t \right\rbrace
		\end{split}
	\]
	for all $0\leq t\leq T$.
	From this, it can be seen that it is sufficient to choose the time $t$ satisfying
	\[
		\sum_{l=2}^3 4^{l}\widetilde{C}E_0^{l}t \leq E_0 \quad {\rm and}\quad
		\exp\left\lbrace \sum_{l=0}^3 4^{l}\widetilde{C}E_0^{l}t \right\rbrace  \leq 2,
	\]
	such that
	\[
	\mathbb{E}_{n+1}(t)+ \int_0^t\mathbb{D}_{n+1}(t)dt \leq 4E_0.
	\]
	This finishes the induction step,
	where the time $T$ can be specified by
	\begin{equation}\label{eq:Tmax}
		0<T\leq T^\ast:=\dfrac{\min\{E_0, \ln 2\}}{\sum_{l=0}^3 4^{l}\widetilde{C}E_0^{l}}.
	\end{equation}
    Now, with the above induction argument,
	one can conclude from the standard compactness argument that there exists a local-in-time solution $(Q,u)$ to system \eqref{eq1.1} with initial data $(Q_0,u_0)\in H^{s+1}\times H^s$ for $s\geq 2$. More precisely,
	\[
	Q\in L^\infty\left( 0,T;H^{s+1} \right)\cap L^2\left( 0,T;H^{s+2} \right),\quad u\in L^\infty\left( 0,T;H^s \right)\cap L^2\left( 0,T;H^{s+1} \right)
	\]
	and
	\[
	\sup_{t\in [0,T]} \|u(t)\|_{H^s}^2 + \|Q(t)\|_{H^{s+1}}^2 
	+\int_0^T \mu\|\nabla u(t)\|_{H^{s}}^2 + (|a|+1)\Gamma\|Q(t)\|_{H^{s+1}}^2 + \Gamma\|\Delta Q(t)\|_{H^{s}}^2 dt \leq C_0,
	\]
	where $C_0$ is a positive constant depending only on $E_0$, $s$, $|a|$, $b$, $c$, $\mu$, $\kappa$, $\lambda$ and $\Gamma$.
	\item As the second step,
	we take the difference of any two solutions $(Q_1,u_1)$ and $(Q_2,u_2)$ belonging to the class specified above, and prove the uniqueness by using the energy method. Namely, we shall define
	\[
	\left(\delta Q,\delta u \right):=\left(Q_1-Q_2, u_1-u_2\right),
	\]
	and study the following system:
	\begin{equation}
		\begin{cases}
			\partial_t \delta Q + \delta u\cdot\nabla \delta Q-\delta \Omega\delta Q +\delta Q\delta \Omega + \delta u\cdot\nabla Q_2 + u_2\cdot\nabla \delta Q\\
			\hspace{1.6cm}+ Q_2\delta\Omega + \delta Q\Omega_2 - \delta\Omega Q_2 - \Omega_2\delta Q\\
			\hspace{1.2cm}= \lambda|Q_1|\delta D +\lambda\left( |Q_1|-|Q_2| \right)D_2 + b\Gamma\left[ Q_1\delta Q +\delta QQ_2 - \dfrac{{\rm Tr}\left( Q_1\delta Q + \delta Q Q_2 \right)}{3}\mathbb{I}_3 \right]\\
			\hspace{1.6cm}+ \Gamma\left( \Delta\delta Q  - a\delta Q- c\left( \delta Q{\rm Tr}(Q_1^2) + Q_2 {\rm Tr}\left( Q_1\delta Q + \delta Q Q_2 \right) \right) \right),\\
			\partial_t \delta u + \delta u\cdot\nabla \delta u -\mu \Delta\delta u\\
			\hspace{1.2cm}= -\mathbb{P}\nabla\cdot\left( \nabla\delta Q\odot \nabla\delta Q - \delta Q\Delta \delta Q + \Delta \delta Q\delta Q \right)- \mathbb{P} \left( u_2\cdot\nabla \delta u +\delta u\cdot\nabla u_2 \right)\\
			\hspace{1.6cm}+ \mathbb{P}\nabla\cdot\left(  \delta Q\Delta Q_2 + Q_2\Delta\delta Q -\Delta \delta QQ_2 -\Delta Q_2\delta Q \right) + \kappa\mathbb{P}\nabla\cdot\delta Q\\
			\hspace{1.6cm}-\mathbb{P}\nabla\cdot\left( \nabla\delta Q\odot Q_2 + Q_2\odot\nabla\delta Q  \right) -\lambda\mathbb{P}\nabla\cdot \left( |Q_1| \left(\Delta\delta Q  - a\delta Q\right) \right)\\
			\hspace{1.6cm}- \lambda\mathbb{P}\nabla\cdot \left( \left(|Q_1|-|Q_2|\right) \left(Q_2  - aQ_2\right) \right) + c\lambda\mathbb{P}\nabla\cdot \left( \left(|Q_1|-|Q_2|\right)Q_2{\rm Tr}(Q_2^2) \right)\\
			\hspace{1.6cm} + c\lambda\mathbb{P}\nabla\cdot \left( |Q_1|\delta Q{\rm Tr}(Q_1^2) \right) + c\lambda\mathbb{P}\nabla\cdot \left( |Q_1|Q_2{\rm Tr}\left(Q_1\delta Q + \delta QQ_2\right) \right)\\
			\hspace{1.6cm} -b\lambda\mathbb{P}\nabla\cdot \left[ |Q_1|Q_1\delta Q + \left( |Q_1|-|Q_2| \right)Q_1Q_2 + |Q_2|\delta QQ_2 \right]\\
			\hspace{1.6cm} +b\lambda\mathbb{P}\nabla\cdot \left[ \dfrac{|Q_1|{\rm Tr}\left(Q_1\delta Q\right) + \left( |Q_1|-|Q_2| \right){\rm Tr}\left(Q_1Q_2\right) + |Q_2|{\rm Tr}\left(\delta QQ_2\right)}{3}\mathbb{I}_3 \right].
		\end{cases}
	\end{equation}
	In the same way as the proof of Proposition~\ref{prop2.1}, one can obtain that
	\[
	\begin{split}
		\dfrac{1}{2}\dfrac{d}{dt}\Big(\|\delta u\|_{H^s}^2 + \|\delta Q\|_{H^{s+1}}^2 \Big)
		\lesssim&\Big[ 1+ \|u_1\|_{H^s}^2+ \|u_2\|_{H^s}^2 + \|\nabla u_2\|_{H^s}^2 +\| Q_1\|_{H^{s+1}}^2 +\| Q_2\|_{H^{s+1}}^2 \\
		&+ \|Q_1\|_{H^s}^4 + \|Q_2\|_{H^s}^4 + \|Q_1\|_{H^s}^6 + \|Q_2\|_{H^s}^6  \Big]\cdot\left(\|\delta u\|_{H^s}^2 + \|\delta Q\|_{H^{s+1}}^2\right).
	\end{split}
	\]
	For simplicity, we omit the details here and leave it to the interested readers.
	Then, the desired uniqueness result follows from the standard Gr\"{o}nwall's inequality.
	\item In the last step,
	we will claim that ${\rm Tr}Q=0$ by a similar argument as in \cite{weak-ALC}. Hence, by 
	taking the trace on both sides of the first equation in system \eqref{eq1.1}, we aim to study the following Cauchy problem:
	\[
	\begin{cases}
		\partial_t {\rm Tr}Q+ u \cdot\nabla{\rm Tr} Q  =\Gamma \left(\Delta {\rm Tr}Q -a{\rm Tr}Q -c{\rm Tr}Q {\rm Tr}(Q^2)\right),\\
		{\rm Tr}Q|_{t=0}= {\rm Tr}Q_0=0.
	\end{cases}
	\]
	Then, a standard energy argument implies that
	\begin{equation}
		\dfrac{d}{dt}\|{\rm Tr}Q\|_{L^2}^2 + \Gamma\|\nabla {\rm Tr}Q\|_{L^2}^2 \leq C\|{\rm Tr}Q\|_{L^2}^2,
	\end{equation}
	where $C$ is a positive constant depending on $E_0$, $|a|$, $c$ and $\Gamma$. Therefore, applying the Gr\"{o}nwall's inequality again, one can see that $Q$ is traceless and this completes the proof of Theorem~\ref{thm1.1}.$\hfill\Box$
\end{enumerate}

\section{Global well-posedness}\label{sec-3}
In this section, we are going to show that the local solution previously constructed is indeed global when the initial data is sufficiently small.
To do this, one may prove it by using the standard continuation argument along with suitable a priori estimate of system \eqref{eq1.1}.
Hence, we define the following kinetic energy functional
\[
\mathcal{E}(t):= \|u(t)\|_{H^s}^2 + M\|Q(t)\|_{H^{s}}^2 +\|\nabla Q(t)\|_{H^{s}}^2
\]
and dissipation energy functional
\[
\mathcal{D}(t):= \mu\|\nabla u(t)\|_{H^{s}}^2 + aM\Gamma\|  Q(t)\|_{H^{s}}^2 + (a+M)\Gamma\|\nabla Q(t)\|_{H^{s}}^2 + \Gamma\|\Delta
Q(t)\|_{H^{s}}^2,																	
\]
where $M$ is a sufficiently large positive constant, to be determined later in the proof.

Let us explain why we use these new energy functionals here; see also \cite{active-limit}.
In fact,
by observing the structure of system \eqref{eq1.1}, the difficulty is how to deal with the linear active term $\kappa\nabla\cdot Q$ in performing the a priori estimate.
Heuristically, we will multiply $\kappa\partial^k\nabla\cdot Q$ by $\partial^k u$ and integrate by parts to get the resulting quadratic term $\kappa\left\langle \partial^kQ:\partial^k\nabla u\right\rangle$, which can be estimated as follows
\[
\left| \kappa\left\langle \partial^k Q:\partial^k\nabla u\right\rangle \right|\leq |\kappa| \|Q\|_{H^s}\|\nabla u\|_{H^s}.
\]
Thus, we expect that the right-hand side of the above inequality can be absorbed by the dissipative energy terms $aM\Gamma\|Q\|_{H^s}^2$ and $\mu\|\nabla u\|_{H^s}^2$. However, when $a\leq 0$,
one can see that the dissipation energy functional that we used is not well-defined.
Hence, to demonstrate the global regularity for system \eqref{eq1.1} by employing the energy argument,
we restrict ourselves to the case of $a>0$.
In particular, we will apply the Cauchy-Schwarz inequality to get
\begin{equation}\label{linear-term}
|\kappa| \|Q\|_{H^s}\|\nabla u\|_{H^s} \leq \dfrac{\mu}{2}\|\nabla u\|_{H^s}^2 + \dfrac{2\kappa^2}{\mu} \|Q\|_{H^s}^2,
\end{equation}
and these two terms on the right-hand side of \eqref{linear-term} can be absorbed by the dissipative energy functional $\mathcal{D}(t)$ with a well-chosen constant $M$.

Based on this idea, we are now ready to present the a priori estimate for system \eqref{eq1.1} in the following.
\begin{prop}\label{prop3.1}
	Let $s\geq 2$ be an integer and $a>0$. Then the local solution $(Q,u)$ established in Theorem~\ref{thm1.1} satisfies
	\begin{equation}\label{es-global}
			 \dfrac{d}{dt}\mathcal{E}(t)+ \mathcal{D}(t)
			 \leq C \sum_{l=1}^3\mathcal{E}^{\frac{l}{2}}(t) \mathcal{D}(t),\quad \forall t\in [0,T],
	\end{equation}
	where $C$ is a positive constant depending only on $s$, $a$, $b$, $c$, $\mu$, $\kappa$, $\lambda$ and $\Gamma$.
\end{prop}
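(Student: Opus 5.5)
We follow the scheme of Proposition~\ref{prop2.1}, now applied to the genuine system \eqref{eq1.1} rather than to the iteration. For each multi-index $0\leq|k|\leq s$ we apply $\partial^k$ to both equations. We test the $Q$-equation against $M\partial^kQ-\partial^k\Delta Q$ and the $u$-equation against $\partial^k u$. Then we take traces, integrate over $\R^3$, and sum over $k$.

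\textbf{Quadratic part.} The terms $\Gamma\Delta Q-a\Gamma Q$ tested against $M\partial^kQ-\partial^k\Delta Q$ give, after integration by parts, exactly
\[
aM\Gamma\|\partial^kQ\|_{L^2}^2+(a+M)\Gamma\|\partial^k\nabla Q\|_{L^2}^2+\Gamma\|\partial^k\Delta Q\|_{L^2}^2 .
\]
Together with $\mu\|\partial^k\nabla u\|_{L^2}^2$, this produces $\mathcal{D}(t)$. Because $a>0$, no term analogous to $\eta$ appears.

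The only quadratic coupling is $-\kappa\langle\partial^kQ:\partial^k\nabla u\rangle$. We bound it by \eqref{linear-term}, that is, by $\frac{\mu}{2}\|\nabla u\|_{H^s}^2+\frac{2\kappa^2}{\mu}\|Q\|_{H^s}^2$. We choose $M$ with $aM\Gamma\geq \frac{4\kappa^2}{\mu}$, so the second term is at most half of $aM\Gamma\|Q\|_{H^s}^2$. Both pieces are then absorbed into $\frac12\mathcal{D}$.

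Rescaling by a factor $2$ then leaves $\frac{d}{dt}\mathcal{E}+\mathcal{D}$ on the left. The right-hand side contains only cubic and higher terms.

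\textbf{Cancellations.} The terms carrying the highest derivatives cancel by Lemma~\ref{lem.cancel} and its $\partial^k$-version, as in Proposition~\ref{prop2.1}:
\[
\langle u\cdot\nabla\partial^k u,\partial^k u\rangle=0,\qquad \langle u\cdot\nabla \partial^kQ:M\partial^kQ\rangle=0.
\]
Moreover, the $\Omega$–$\Delta Q$ pairing with the $Q\Delta Q-\Delta QQ$ stress cancels, and the $\lambda|Q|D$ term cancels against the $\lambda|Q|\Delta Q$ stress.

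The remaining contributions are commutators, such as $[\partial^k,u]_{\rm div}Q$, $[\partial^k,(\Omega,Q)]_-$, $[\partial^k,(\Delta Q,Q)]_-$ and $[\partial^k,|Q|]\Delta Q$. There are also lower-order products: $\partial^k(\nabla Q\odot\nabla Q)$, $a\lambda\partial^k(|Q|Q)$, and the polynomial terms in $H[Q]$ together with their $\lambda|Q|$-multiples.

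\textbf{Cubic and higher terms.} Each remaining term must be bounded by $\mathcal{E}^{l/2}\mathcal{D}$ with $l=1,2,3$. The structural point is that, since $a>0$, $\mathcal{D}$ controls the full norms $\|Q\|_{H^{s+2}}$ and $\|\nabla u\|_{H^s}$. Hence in every trilinear term two factors are placed in dissipation norms and one in an energy norm.

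For example, $\langle[\partial^k,u]_{\rm div}Q:\partial^k\Delta Q\rangle$ is bounded as follows. We use Moser-type calculus inequalities (Lemma~\ref{lem-a1}), not the $\|u\|_{H^s}\|Q\|_{H^{s+1}}$ bound of Lemma~\ref{new-com}, so that we can place $\|\nabla u\|_{H^{s-1}}$ and $\|\nabla Q\|_{H^s}$ on the two leading factors. This gives
\[
\bigl(\|\nabla u\|_{L^\infty}\|\partial^k Q\|_{L^2}+\|u\|_{L^\infty}\|\nabla Q\|_{H^{s}}\bigr)\|\Delta Q\|_{H^s},
\]
which is at most $\mathcal{E}^{1/2}\mathcal{D}$ by $H^2\hookrightarrow L^\infty$. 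Here $\|u\|_{L^\infty}\lesssim\mathcal{E}^{1/2}$ is used, and $\|\nabla Q\|_{H^s}\lesssim\mathcal{D}^{1/2}$ is the key gain over the local estimate.

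The term $\kappa$-free product $\partial^k(\nabla Q\odot\nabla Q)$ against $\partial^k\nabla u$ gives $\|\nabla Q\|_{L^\infty}\|\nabla Q\|_{H^s}\|\nabla u\|_{H^s}\lesssim\mathcal{E}^{1/2}\mathcal{D}$.

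For $|Q|$ we use Lemma~\ref{lem-A6} to control $\|\partial^k|Q|\|_{L^2}\lesssim\|Q\|_{H^s}$. The quadratic $b$-terms and the cubic $c$-terms in $H$, multiplied by $\lambda|Q|$, then give $\mathcal{E}^{1/2}\mathcal{D}$, $\mathcal{E}\mathcal{D}$ and $\mathcal{E}^{3/2}\mathcal{D}$. This accounts for the sum up to $l=3$.

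\textbf{Main obstacle.} The main difficulty is the low-frequency term $\|u\|_{L^2}$. The quantity $\mathcal{D}$ contains only $\nabla u$, so no bound may put $u$ itself (without a derivative) in a dissipation slot. Terms like $\langle u\cdot\nabla Q: M Q\rangle$ and $\langle u\otimes u,\nabla u\rangle$ are handled by the cancellations above. Otherwise we place $u$ in $L^\infty$ or $L^6$, using $\|u\|_{L^6}\lesssim\|\nabla u\|_{L^2}$, so that it sits either in $\mathcal{E}$ or in $\mathcal{D}$ through its gradient.

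The top-order commutators must be checked one by one to ensure that a derivative never falls beyond $\partial^{s+1}u$ or $\partial^{s+2}Q$. For these we rely on the commutator estimates of Lemma~\ref{lem-a1}, replacing the Lemma~\ref{new-com} bounds as indicated in Remark~\ref{re2.2}. The constants depend on $M$, which is fixed by the absorption step, and hence only on $s,a,b,c,\mu,\kappa,\lambda,\Gamma$.
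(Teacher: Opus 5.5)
Your proposal follows the paper's proof of Proposition~\ref{prop3.1} essentially step by step. It uses the same multipliers $M\partial^kQ-\partial^k\Delta Q$ and $\partial^ku$, and the same absorption of $-\kappa\langle\partial^kQ:\partial^k\nabla u\rangle$ via \eqref{linear-term} with $M$ chosen as in \eqref{def-M}. It relies on the same cancellations from Lemma~\ref{lem.cancel}, and on Moser-type bounds that place two factors in $\mathcal{D}$ (which controls $\|Q\|_{H^{s+2}}$ when $a>0$) and one factor in $\mathcal{E}$.

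There is one slip in your illustrative estimate. The commutator $[\partial^k,u]_{\rm div}Q$ contains the piece $\partial^ku\cdot\nabla Q$, which is not controlled by $\|\nabla u\|_{L^\infty}\|\partial^kQ\|_{L^2}+\|u\|_{L^\infty}\|\nabla Q\|_{H^s}$; that last term belongs instead to the non-commutator part $u\cdot\nabla\partial^kQ$, the paper's $L_1$. Lemma~\ref{lem-a1} gives the correct bound $\|\nabla u\|_{L^\infty}\|\partial^{k-1}\nabla Q\|_{L^2}+\|\nabla Q\|_{L^\infty}\|\partial^ku\|_{L^2}$. The missing term is still $\lesssim\mathcal{E}^{1/2}\mathcal{D}$, since $\|\nabla Q\|_{L^\infty}\lesssim\|\nabla Q\|_{H^2}$ and $\|\partial^ku\|_{L^2}\leq\|\nabla u\|_{H^{s-1}}$ for $|k|\geq1$, exactly as in the paper's treatment of $L_2$.
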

\begin{proof}
Similarly to Proposition~\ref{prop2.1}, we divide the proof into two steps.

\noindent\text{\em Step~1: The basic energy estimate.}
We take the summation of the first equation in \eqref{eq1.1} multiplied by $MQ-\Delta Q$ and the second equation in \eqref{eq1.1} multiplied by $u$, take the trace, and integrate by parts over $\R^3$ to get
\begin{align*}\label{eq3.2}
	&\dfrac{1}{2}\dfrac{d}{dt}\left( \|u\|_{L^2}^2 + M\|Q\|_{L^2}^2 + \|\nabla Q\|_{L^2}^2 \right) +\mu\|\nabla u\|_{L^2}^2  + aM\Gamma\| Q\|_{L^2}^2 + (a+M)\Gamma\|\nabla Q\|_{L^2}^2 + \Gamma\|\Delta
	Q\|_{L^2}^2\\
	=& \lambda M \left\langle  |Q|D: Q  \right\rangle   -a\lambda \left\langle  |Q| Q:\nabla u   \right\rangle - \kappa \left\langle  Q:\nabla u  \right\rangle + \lambda\left\langle |Q| \left\lbrace b\left[Q^2-\dfrac{{\rm Tr}(Q^2)}{3}\mathbb{I}_3\right]-cQ{\rm Tr}(Q^2) \right\rbrace:\nabla u  \right\rangle\\
	& +\Gamma  \left\langle  \left\lbrace b\left[Q^2-\dfrac{{\rm Tr}(Q^2)}{3}\mathbb{I}_3\right]-cQ{\rm Tr}(Q^2) \right\rbrace:MQ -\Delta Q  \right\rangle\\
	\leq & C\|Q\|_{L^\infty} \|Q\|_{L^2}\|\nabla u\|_{L^2} +\dfrac{\mu}{2}\|\nabla u\|_{L^2}^2 + \dfrac{2\kappa^2}{\mu} \|Q\|_{L^2}^2 + C  \left( \|Q\|_{L^4}^2+ \|Q\|_{L^6}^3  \right)\cdot \left( \|Q\|_{H^2}+ \|\nabla u\|_{L^2}\|Q\|_{L^\infty} \right) \\
	\leq & C\sum_{l=1}^{3}\mathcal{E}^{\frac{l}{2}}(t)\mathcal{D}(t) +\dfrac{\mu}{2}\|\nabla u\|_{L^2}^2 + \dfrac{2\kappa^2}{\mu} \|Q\|_{L^2}^2.
\end{align*}
Let
\begin{equation}\label{def-M}
	M:= \max\left\lbrace 1, \dfrac{4\kappa^2}{a\mu\Gamma}\right\rbrace,
\end{equation}
we immediately get
\begin{equation}\label{eq3.6}
	\begin{split}
		\dfrac{1}{2}\dfrac{d}{dt}\Big( \|u\|_{L^2}^2 + M\|Q\|_{L^2}^2 + &\|\nabla Q\|_{L^2}^2 \Big)
		+\dfrac{\mu}{2}\|\nabla u\|_{L^2}^2 + \dfrac{aM\Gamma}{2}\| Q\|_{L^2}^2\\
		   &+ (a+M)\Gamma\|\nabla Q\|_{L^2}^2 + \Gamma\|\Delta
		Q\|_{L^2}^2\\
		\leq &  C\sum_{l=1}^{3}\mathcal{E}^{\frac{l}{2}}(t)\mathcal{D}(t).
	\end{split}
\end{equation}
\noindent\text{\em Step~2: Higher-order energy estimate.}
In this step, we similarly apply $\partial^k$ ($1\leq |k|\leq s$) to system \eqref{eq1.1}, and sum up the second equation of resulting system multiplied by $\partial^ku $ and the first equation of resulting system multiplied by $M\partial^kQ -\partial^k\Delta Q $, take the trace, and then integrate by parts to find
\begin{align}\label{eq3.7}
		\dfrac{1}{2}\dfrac{d}{dt}\Big( \|\partial^ku\|_{L^2}^2& +  M\|\partial^kQ\|_{L^2}^2 +  \|\partial^k\nabla Q\|_{L^2}^2 \Big)
		+\mu\|\partial^{k}\nabla u\|_{L^2}^2 \nonumber\\
		&+ aM\Gamma \|\partial^{k} Q\|_{L^2}^2+ (a+M)\Gamma\|\partial^{k}\nabla Q\|_{L^2}^2 + \Gamma\|\partial^k\Delta
		Q\|_{L^2}^2\nonumber\\
		=&  \left\langle u\cdot\nabla \partial^kQ:\partial^k\Delta
		Q  \right\rangle + \left\langle [\partial^k,u]_{\rm div} Q:\partial^k\Delta
		Q-M\partial^kQ  \right\rangle -\left\langle [\partial^k,u]_{\rm div} u:\partial^k u \right\rangle\nonumber\\
		& +M\left\langle \partial^k\Omega Q -  Q\partial^k\Omega:\partial^k Q\right\rangle +\left\langle[\partial^k,(\Omega,Q)]_-:M\partial^k Q-\partial^k\Delta
		Q \right\rangle\nonumber\\
		& +  \left\langle  \partial^k\left( \nabla Q \odot\nabla Q\right):\partial^{k}\nabla u  \right\rangle + \left\langle[\partial^k,(\Delta Q,Q)]_-: \partial^k\nabla u \right\rangle - \kappa \left\langle  \partial^kQ:\partial^k\nabla u   \right\rangle\nonumber\\
		&+ \lambda M\left\langle \partial^k\left(  |Q|D\right):\partial^k Q  \right\rangle -a\lambda \left\langle \partial^k\left(  |Q|Q\right):\partial^k \nabla u  \right\rangle  \nonumber\\
		& - \lambda \left\langle [\partial^k,|Q|] D:\partial^k\Delta Q  \right\rangle +\lambda \left\langle [\partial^k,|Q|]\Delta Q:\partial^k\nabla u  \right\rangle \\
		& +\Gamma  \left\langle  \partial^k\left\lbrace b\left[Q^2-\dfrac{{\rm Tr}(Q^2)}{3}\mathbb{I}_3\right]-cQ{\rm Tr}(Q^2) \right\rbrace:M\partial^kQ -\partial^k\Delta Q  \right\rangle\nonumber\\
		&+ \lambda\left\langle \partial^k\left(|Q| \left\lbrace b\left[Q^2-\dfrac{{\rm Tr}(Q^2)}{3}\mathbb{I}_3\right]-cQ {\rm Tr}(Q^2) \right\rbrace\right):\partial^k\nabla u \right\rangle\nonumber\\
		\overset{\bigtriangleup}{=}& \sum_{j=1}^{14} L_j.\nonumber
\end{align}
By a similar procedure as the proof of Proposition~\ref{prop2.1},
the nonlinear terms on the right-hand side of \eqref{eq3.7}
can be estimated as follows:
\begin{align*}
		|L_1|+|L_4|\leq &  \| u\|_{L^\infty}\|\nabla \partial^kQ\|_{L^2}\|\partial^k\Delta
		Q\|_{L^2}+ M\| Q\|_{L^\infty}\|\partial^{k}\nabla u\|_{L^2}\|\partial^k Q  \|_{L^2}\\
		\leq & C\mathcal{E}^{\frac{1}{2}}(t)\mathcal{D}(t)\\
		|L_2|\leq &  \left( \|\nabla u\|_{L^\infty} \|\partial^{k-1} \nabla  Q\|_{L^2}+ \|\nabla  Q\|_{L^\infty}\|\partial^{k} u\|_{L^2}\right)\left( M\|\partial^{k}  Q \|_{L^2} + \|\partial^{k} \Delta  Q \|_{L^2}\right)\\
		\leq & C\left( \|\nabla u\|_{H^2} \|  Q\|_{H^{s}}+ \|\nabla  Q\|_{H^2}\|  u\|_{H^s}\right)\left(M \|  Q\|_{H^s} + \| \Delta  Q\|_{H^s}\right)\\
		\leq & C\mathcal{E}^{\frac{1}{2}}(t)\mathcal{D}(t)\\
		|L_3|\leq & \left( \|\nabla u \|_{L^\infty}\|\partial^{k-1}\nabla u \|_{L^2}+\|\nabla u \|_{L^\infty} \|\partial^{k} u \|_{L^2}\right)  \|\partial^{k}  u \|_{L^2}\\
		\leq & C \|\nabla u \|_{H^2}\| u \|_{H^{s}}\mathcal{D}^{\frac{1}{2}}(t)   \qquad (|k|\geq 1)\\
		\leq & C\mathcal{E}^{\frac{1}{2}}(t)\mathcal{D}(t)\\
		|L_5|\leq & \left( \|\nabla Q\|_{L^\infty}\|\partial^{k-1}\nabla u \|_{L^2}+ \|\nabla u \|_{L^\infty}\|\partial^{k}Q \|_{L^2} \right)\left( M\|\partial^{k}  Q \|_{L^2} + \|\partial^{k} \Delta  Q \|_{L^2}\right)\\
		\leq & C\left( \|\nabla Q \|_{H^2}\| u \|_{H^{s}}+ \|\nabla u \|_{H^2}\| Q \|_{H^s} \right)\mathcal{D}^{\frac{1}{2}}(t)\\
		\leq & C\mathcal{E}^{\frac{1}{2}}(t)\mathcal{D}(t)\\
		|L_6|+ |L_8|\leq &   C\|\partial^{k}\nabla Q  \|_{L^2} \|\nabla Q  \|_{L^\infty} \|\partial^{k}\nabla u   \|_{L^2} + |\kappa|\|\partial^k \nabla u\|_{L^2}\|\partial^k Q\|_{L^2}\\
		\leq & C\mathcal{E}^{\frac{1}{2}}(t)\mathcal{D}(t) + \dfrac{\mu}{2}\|\partial^k\nabla u\|_{L^2}^2 + \dfrac{2\kappa^2}{\mu} \|\partial^kQ\|_{L^2}^2\\
		|L_7|\leq &   \left(  \|\nabla  Q \|_{L^\infty}  \|\partial^{k-1}\Delta Q \|_{L^2} +  \| \Delta Q \|_{L^\infty}  \|\partial^{k}  Q \|_{L^2}  \right) \|\partial^{k}\nabla u  \|_{L^2}\\
		\leq & C \left(  \|\nabla  Q  \|_{H^2}  \|\nabla Q  \|_{H^{s}} +  \| \Delta Q \|_{H^2}  \| Q \|_{H^s}  \right) \mathcal{D}^{\frac{1}{2}}(t)\\
		\leq &  C\mathcal{E}^{\frac{1}{2}}(t)\mathcal{D}(t)\\
		|L_9|\leq &  C\left(  \|Q \|_{L^\infty}  \|\partial^{k}\nabla u \|_{L^2} +  \| \nabla u \|_{L^\infty}  \|\partial^{k}  |Q| \|_{L^2}  \right) \|\partial^{k}  Q \|_{L^2}  \\
		\leq &  C\left(  \| Q\|_{H^{2}}  \|\nabla u  \|_{H^{s}} +  \| \nabla u\|_{H^2}  \| Q\|_{H^s}  \right)\mathcal{D}^{\frac{1}{2}}(t)  \\
		\leq & C\mathcal{E}^\frac{1}{2}(t)\mathcal{D}(t)\\
		|L_{11}| \leq & C\left(  \|\nabla  |Q| \|_{L^\infty}  \|\partial^{k-1}\nabla u \|_{L^2} +  \| \nabla u \|_{L^\infty}  \|\partial^{k}  |Q| \|_{L^2}  \right) \|\partial^k\Delta Q \|_{L^2}\\
		\leq &   C\left(  \|\nabla Q \|_{H^{2}}  \|u  \|_{H^{s}} +  \| \nabla u \|_{H^2}  \| Q  \|_{H^s}  \right) \|\Delta Q  \|_{H^s}\\
		\leq &  C\mathcal{E}^{\frac{1}{2}}(t)\mathcal{D}(t)\\
		|L_{12}| \leq &  C\left(  \|\nabla  |Q| \|_{L^\infty}  \|\partial^{k-1}\Delta Q \|_{L^2} +  \| \Delta Q \|_{L^\infty}  \|\partial^{k}  |Q| \|_{L^2}  \right) \|\partial^k\nabla u \|_{L^2}\\
		\leq &   C\left(  \|\nabla Q \|_{H^{2}}  \|\nabla Q  \|_{H^{s}} +  \| \Delta Q  \|_{H^2}  \| Q  \|_{H^s}  \right) \|\nabla u  \|_{H^s}\\
		\leq &  C\mathcal{E}^{\frac{1}{2}}(t)\mathcal{D}(t)\\
		|L_{10}|+ |L_{13}| + |L_{14}|\lesssim & \mathcal{D}^{\frac{1}{2}}(t)\Big( \|Q\|_{L^\infty}\|\partial^k Q\|_{L^2} + \|Q\|_{L^\infty}\|\partial^k |Q|\|_{L^2} + \|Q\|_{L^\infty}^2\|\partial^k Q\|_{L^2}\\
		& + \|Q\|_{L^\infty}^2\|\partial^k |Q|\|_{L^2} + \|Q\|_{L^\infty}^3\|\partial^k Q\|_{L^2} + \|Q\|_{L^\infty}^3\|\partial^k |Q|\|_{L^2}\Big)\\
		\leq & C\sum_{l=1}^{3}\mathcal{E}^{\frac{l}{2}}(t)\mathcal{D}(t).
\end{align*}
Together with \eqref{def-M}, the above results imply that
\begin{equation}\label{eq3.23}
	\begin{split}
		\dfrac{1}{2}\dfrac{d}{dt}\Big( \|\partial^ku\|_{L^2}^2 +  M\|\partial^kQ\|_{L^2}^2 + & \|\partial^k\nabla Q\|_{L^2}^2 \Big)
		+\dfrac{\mu}{2}\|\partial^{k}\nabla u\|_{L^2}^2 + \dfrac{aM\Gamma}{2} \|\partial^{k} Q\|_{L^2}^2\\
		&+ (a+M)\Gamma\|\partial^{k}\nabla Q\|_{L^2}^2 + \Gamma\|\partial^k\Delta
		Q\|_{L^2}^2\\
		\leq& C \sum_{l=1}^{3}\mathcal{E}^{\frac{l}{2}}(t)  \mathcal{D}(t) .
	\end{split}
\end{equation}
Finally, we get \eqref{es-global} by summing \eqref{eq3.6} and \eqref{eq3.23} for all $1\leq |k|\leq s$. This completes the proof.
\end{proof}
\noindent\textbf{Proof of Theorem\ref{thm1.2}}~~
For small initial data, together with the a priori estimate \eqref{es-global}, the global existence result for system \eqref{eq1.1}
follows from the standard continuation argument \cite{nishida1978}.
This completes the proof.$\hfill\Box$



\section{Analysis of the linearized system}\label{Sec.4}
As already pointed out in Section~\ref{sec.1}, in order to obtain the decay rates of the global solution stated in Theorem~\ref{thm1.2}, the idea is to combine the energy method with the Fourier analysis of the corresponding linear system.
Thus, we reformulate the system \eqref{eq1.1} at equilibrium state $(0,0)$ as follows:
\begin{equation}\label{eq4.1}
	\begin{cases}
		Q_t -\Gamma \Delta Q+a\Gamma Q =-{\rm  div}f_1 + f_2,\\
		u_t-\mu \Delta u -\kappa \mathbb{P}{\rm div} Q=-\mathbb{P} {\rm  div}f_3,
	\end{cases}
\end{equation}
where
\begin{equation}\label{eq4.2}
	\begin{split}
		f_1=& u\otimes Q,\\
		f_2= & \Omega Q-Q\Omega+\lambda |Q|D + b\Gamma\left[Q^2-\dfrac{{\rm Tr}(Q^2)}{3}\mathbb{I}_3\right]-c\Gamma Q{\rm Tr}(Q^2),\\
		f_3=& u\otimes u + \nabla Q \odot\nabla Q- Q\Delta Q+\Delta QQ +\lambda |Q|H[Q].
	\end{split}
\end{equation}
Observe that the linearized system of \eqref{eq4.1} is indeed \eqref{eq2.1}.
Hence, as we explained above, we shall analyze the following auxiliary linearized system:
\begin{equation}\label{lin-sys}
	\begin{cases}
		\sigma_t -\Gamma \Delta \sigma +a\Gamma \sigma =0,\\
		u_t-\mu \Delta u -\kappa \sigma=0,\\
		(\sigma,u)(0,x)=(\sigma_0,u_0):= (\mathbb{P} {\rm div}Q_0,u_0)(x).
	\end{cases}
\end{equation}
In general, although obtaining the explicit formulation of the Green function $\mathbb{G}_{\sigma,u}(t,x)$ for this system is difficult, one may seek the associated Fourier transform of the Green function.
More precisely,
we have an explicit expression of $\widehat{\mathbb{G}_{\sigma,u}}(t,\xi)$ in the following Lemma.
\begin{lemma}\label{lem4.1}
	Fourier transform of the solution to the linearized system \eqref{lin-sys} is
	\begin{equation}
		\begin{bmatrix}
			\hat{\sigma}(t,\xi)\\
			\hat{u}(t,\xi)
		\end{bmatrix}
		=
		\widehat{\mathbb{G}_{\sigma,u}}(t,\xi)
		\begin{bmatrix}
			\hat{\sigma}_0\\
			\hat{u}	_0
		\end{bmatrix},
	\end{equation}
	where
	\begin{equation}
		\widehat{\mathbb{G}_{\sigma,u}}(t,\xi):=
		\begin{bmatrix}
			\mathcal{A}(t,\xi)\mathbb{I}_3 & 0 \\
			\mathcal{B}(t,\xi)\mathbb{I}_3 &  \mathcal{C}(t,\xi)\mathbb{I}_3
		\end{bmatrix},
	\end{equation}
	with
	\begin{equation}
	\mathcal{A}(t,\xi):= e^{-\Gamma (|\xi|^2+a)t},\quad  \mathcal{C}(t,\xi):= e^{-\mu |\xi|^2t},
	\end{equation}
	and $\mathcal{B}(t,\xi)$ is defined by the following two cases:
	\begin{enumerate}[(i).]
		\item $\mu\leq \Gamma$ and $a> 0$:
		\begin{equation}\label{eqB-1}
			\mathcal{B}(t,\xi):=\dfrac{ \kappa}{\mu|\xi|^2 -\Gamma |\xi|^2-a\Gamma} \left(e^{ -\Gamma\left( |\xi|^2+a  \right)t} -e^{-\mu|\xi|^2t}  \right)
		\end{equation}
		\item $\mu> \Gamma$ and $a> 0$:
		\begin{equation}\label{eqB-2}
			\mathcal{B}(t,\xi):=
			\begin{cases}
				\dfrac{ \kappa}{\mu|\xi|^2 -\Gamma |\xi|^2-a\Gamma} \left(e^{ -\Gamma\left( |\xi|^2+a  \right)t} -e^{-\mu|\xi|^2t}  \right), & \text{if $|\xi|\ne\sqrt{\dfrac{a\Gamma}{\mu-\Gamma}}$}\\[2ex]
				\kappa e^{-\mu|\xi|^2t} t, & \text{if $|\xi|=\sqrt{\dfrac{a\Gamma}{\mu-\Gamma}}$}\\
			\end{cases}
		\end{equation}
	\end{enumerate}
\end{lemma}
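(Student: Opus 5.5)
The plan is to take the Fourier transform of \eqref{lin-sys} in $x$. Because the system is linear with constant coefficients, it becomes a family of ODE systems in $t$, parametrized by $\xi$, and these can be solved explicitly.

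Since $\widehat{\Delta f}=-|\xi|^2\hat f$, the transformed system reads
\[
\partial_t\hat\sigma+\Gamma(|\xi|^2+a)\hat\sigma=0,\qquad \partial_t\hat u+\mu|\xi|^2\hat u=\kappa\hat\sigma .
\]
The system is lower triangular, and each component is decoupled, so every block is a scalar multiple of $\mathbb{I}_3$. The first equation gives
\[
\hat\sigma(t,\xi)=e^{-\Gamma(|\xi|^2+a)t}\hat\sigma_0=\mathcal{A}(t,\xi)\hat\sigma_0 .
\]
For the second equation, Duhamel's formula (variation of constants) gives
\[
\hat u(t,\xi)=e^{-\mu|\xi|^2t}\hat u_0+\kappa\int_0^t e^{-\mu|\xi|^2(t-\tau)}e^{-\Gamma(|\xi|^2+a)\tau}\,d\tau\;\hat\sigma_0 .
\]
This identifies $\mathcal{C}=e^{-\mu|\xi|^2t}$, and $\mathcal{B}$ is $\kappa$ times the integral.

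To evaluate the integral, factor out $e^{-\mu|\xi|^2t}$. What remains is
\[
\int_0^t e^{(\mu|\xi|^2-\Gamma|\xi|^2-a\Gamma)\tau}\,d\tau .
\]
Set $\gamma(\xi):=\mu|\xi|^2-\Gamma|\xi|^2-a\Gamma$. If $\gamma\neq0$, the integral equals $(e^{\gamma t}-1)/\gamma$. Multiplying back by $e^{-\mu|\xi|^2t}$ yields exactly \eqref{eqB-1}. If $\gamma=0$, the integrand is identically $1$, and $\mathcal{B}=\kappa t e^{-\mu|\xi|^2t}$.

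The only point needing care is deciding when $\gamma=0$ can occur. This is the case split in the statement. When $\mu\le\Gamma$ and $a>0$, we have $\gamma=(\mu-\Gamma)|\xi|^2-a\Gamma\le -a\Gamma<0$, so the first formula holds for all $\xi$. When $\mu>\Gamma$, $\gamma$ vanishes exactly on the sphere $|\xi|=\sqrt{a\Gamma/(\mu-\Gamma)}$, which gives the second branch of \eqref{eqB-2}. I also note that the first formula tends to the second as $\gamma\to0$, so $\mathcal{B}$ is continuous in $\xi$.

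Finally, I would verify directly that the resulting matrix reproduces the initial data at $t=0$, since $\mathcal{B}(0,\xi)=0$ and $\mathcal{A}=\mathcal{C}=1$ there. I would also verify that it satisfies the transformed ODEs, which is a one-line differentiation. Uniqueness of solutions to linear ODEs then confirms the representation. No step here is a real obstacle; the only subtlety is the degenerate resonance set where $\gamma=0$.
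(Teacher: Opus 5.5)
Your proposal is correct and follows essentially the same route as the paper: take the Fourier transform, solve the decoupled $\hat\sigma$ equation, then integrate the $\hat u$ equation with the factor $e^{\mu|\xi|^2t}$ (your Duhamel formula), splitting according to whether $\mu|\xi|^2-\Gamma|\xi|^2-a\Gamma$ vanishes. Two of your steps are small additions that the paper leaves implicit: the explicit bound of this quantity by $-a\Gamma<0$ when $\mu\le\Gamma$, and the remark that $\mathcal{B}$ is continuous across the resonance sphere.
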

\begin{proof}
	By applying Fourier transform to the auxiliary system \eqref{lin-sys}, we aim to study the following initial value problem
	\begin{equation}\label{eq1.17}
		\begin{cases}
			\hat{\sigma}_t +\Gamma |\xi|^2 \hat{\sigma}+a\Gamma \hat{\sigma} =0,\\
			\hat{u}_t+\mu |\xi|^2 \hat{u} -\kappa \hat{\sigma}=0,\\
			(\hat{\sigma},\hat{u})\mid_{t=0}=(\widehat{\sigma_0},\widehat{u_0}),
		\end{cases}
	\end{equation}
	where
	\begin{equation}\label{eq4.8}
	(\widehat{\sigma_0})_l = i \sum_{j=1}^3 \left(\left( \delta_{lj} -\dfrac{\xi_l \xi_j}{|\xi|^2} \right) \cdot\sum_{k=1}^3 (\widehat{Q_0})_{jk}\xi_k \right),\quad  1\leq l\leq 3.
	\end{equation}
	Obviously, it follows from \eqref{eq1.17}$_{1}$ that
	\begin{equation}\label{eq:4.12}
		\hat{\sigma}= e^{-\Gamma (|\xi|^2+a)t}\widehat{\sigma_0}.
	\end{equation}
	On the other hand, multiplying the second equation of \eqref{eq1.17} by $e^{\mu|\xi|^2t}$, one has
	\begin{equation}\label{eq4.7}
		\left[  e^{\mu|\xi|^2t} \hat{u} \right]_t = \kappa e^{\mu|\xi|^2t} \hat{\sigma}
		=\kappa e^{ \left(\mu|\xi|^2 -\Gamma |\xi|^2-a\Gamma  \right)t} \widehat{\sigma_0}.
	\end{equation}
	For the case $(i)$, it is not difficult to see that
	\[
	\mu|\xi|^2 -\Gamma |\xi|^2-a\Gamma\ne 0.
	\]
	Thus, one can solve \eqref{eq4.7} to get
	\begin{equation*}
		e^{\mu|\xi|^2t} \hat{u} =  \dfrac{\kappa}{\mu|\xi|^2 -\Gamma |\xi|^2-a\Gamma}  e^{ \left(\mu|\xi|^2 -\Gamma |\xi|^2-a\Gamma  \right)t} \widehat{\sigma_0} +C,
	\end{equation*}
	where $C$ is determined by
	\begin{equation*}
		C:= \widehat{u_0} - \dfrac{\kappa}{\mu|\xi|^2 -\Gamma |\xi|^2-a\Gamma} \widehat{\sigma_0}.
	\end{equation*}
	Hence,
	\begin{equation}\label{eq4.12}
		\hat{u}= \dfrac{\kappa}{\mu|\xi|^2 -\Gamma |\xi|^2-a\Gamma} \left(e^{ -\Gamma\left( |\xi|^2+a  \right)t} -e^{-\mu|\xi|^2t}  \right)\widehat{\sigma_0} + e^{-\mu|\xi|^2t}  \widehat{u_0}.
	\end{equation}
	As for the case $(ii)$, one should note that
	\[
	\mu|\xi|^2 -\Gamma |\xi|^2-a\Gamma = 0
	\]
	when $|\xi|=\sqrt{\dfrac{a\Gamma}{\mu-\Gamma}}$. In this setting, \eqref{eq4.7} reduces to
	\begin{equation}
		\left[  e^{\mu|\xi|^2t} \hat{u} \right]_t = \kappa  \widehat{\sigma_0}.
	\end{equation}
	Analogously, the solution of the above equation is
	\begin{equation}\label{eq4.15}
		\hat{u}= \kappa e^{-\mu|\xi|^2t} \widehat{\sigma_0}t + e^{-\mu|\xi|^2t}  \widehat{u_0}.
	\end{equation}
	In conclusion, from \eqref{eq:4.12}, \eqref{eq4.12}, \eqref{eq4.15}, we obtain the explicit Fourier transform of the Green function for system \eqref{lin-sys}. This completes the proof of Lemma~\ref{lem4.1}.
\end{proof}
As a corollary to this Lemma, we have the following result immediately.
\begin{cor}\label{cor4.1}
	The explicit expression of $\widehat{\mathbb{G}_{Q,u}}(t,\xi)$ is determined by
	\begin{equation}
		\widehat{Q}(t,\xi)= \mathcal{A}(t,\xi)\widehat{Q_0}
	\end{equation}
	and
	\begin{equation}
		\hat{u}_l(t,\xi)= i\mathcal{B}(t,\xi)   \sum_{j=1}^3 \left(\left( \delta_{lj} -\dfrac{\xi_l \xi_j}{|\xi|^2} \right) \cdot\sum_{k=1}^3 (\widehat{Q_0})_{jk}\xi_k \right)  + \mathcal{C}(t,\xi)  (\widehat{u_0})_l,\quad 1\leq l\leq 3,
	\end{equation}
	where $\mathcal{A}(t,\xi)$, $\mathcal{B}(t,\xi)$ and $\mathcal{C}(t,\xi)$ are the same as in Lemma~\ref{lem4.1}.
\end{cor}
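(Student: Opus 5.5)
The plan is to read off Corollary~\ref{cor4.1} directly from Lemma~\ref{lem4.1}, using the relation $\sigma=\mathbb{P}{\rm div}\,Q$ between the auxiliary and original linearized systems.

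First, for $Q$: its equation in \eqref{eq2.1} decouples from $u$. Taking the Fourier transform gives the scalar ODE $\widehat{Q}_t+\Gamma(|\xi|^2+a)\widehat{Q}=0$ for each entry. Solving it yields $\widehat{Q}(t,\xi)=e^{-\Gamma(|\xi|^2+a)t}\widehat{Q_0}=\mathcal{A}(t,\xi)\widehat{Q_0}$, exactly as in \eqref{eq:4.12}.

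Second, I check that $\sigma:=\mathbb{P}{\rm div}\,Q$ is consistent with the auxiliary system. Since $\mathbb{P}{\rm div}$ is a Fourier multiplier, it commutes with $\partial_t$, $\Delta$ and multiplication by constants. Applying it to the $Q$-equation therefore shows that $\sigma$ solves \eqref{lin-sys}$_1$ with datum $\sigma_0=\mathbb{P}{\rm div}\,Q_0$. The $u$-equation of \eqref{eq2.1} is literally \eqref{lin-sys}$_2$ with this $\sigma$. By uniqueness of solutions to the linear ODE system \eqref{eq1.17} for each fixed $\xi$, the pair $(\mathbb{P}{\rm div}\,Q,u)$ coincides with the solution given by Lemma~\ref{lem4.1}.

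Third, Lemma~\ref{lem4.1} gives $\hat u=\mathcal{B}\widehat{\sigma_0}+\mathcal{C}\widehat{u_0}$. I then insert the explicit symbol \eqref{eq4.8}: $\widehat{{\rm div}\,Q_0}_j=i\sum_k(\widehat{Q_0})_{jk}\xi_k$ under the paper's Fourier convention, and $\widehat{\mathbb{P}}=\delta_{lj}-\xi_l\xi_j/|\xi|^2$. This produces the stated componentwise formula.

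The only point needing care is well-definedness. At $\xi=0$ the projector symbol is undefined, but it is bounded and multiplied by $\xi_k$, so the formula extends continuously by $0$ and holds almost everywhere. In case (ii), the resonant sphere $|\xi|=\sqrt{a\Gamma/(\mu-\Gamma)}$ is covered by the alternative expression for $\mathcal{B}$ in \eqref{eqB-2}. Neither point is a real obstacle.
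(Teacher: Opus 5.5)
Your proposal is correct and is essentially the paper's argument, which states the corollary as immediate from Lemma~\ref{lem4.1}. In both, one solves the decoupled $Q$-equation in Fourier space, identifies $\sigma=\mathbb{P}{\rm div}\,Q$ with the auxiliary unknown, and substitutes the symbol \eqref{eq4.8} for $\widehat{\sigma_0}$ into $\hat u=\mathcal{B}\widehat{\sigma_0}+\mathcal{C}\widehat{u_0}$; your remarks on $\xi=0$ and on the resonant sphere $|\xi|=\sqrt{a\Gamma/(\mu-\Gamma)}$ are correct extra care that the paper leaves implicit.
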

Now, we are in a position to give the $L^2$ estimates of $\check{\mathcal{A}}(t,\xi)$, $\check{\mathcal{B}}(t,\xi)$, $\check{\mathcal{C}}(t,\xi)$, which play a crucial role in studying the decay properties of solutions to system \eqref{eq1.1}.
\begin{lemma}\label{lem4.2}
	Suppose that $f\in L^1\cap H^k$ $(k\geq 0)$, we have
	\begin{enumerate}[(i).]
		\item $\left\| \partial_x^k \left(  \check{\mathcal{A}}(t,x)\ast f \right)\right\|_{L^2}\leq Ce^{-a\Gamma t}(1+t)^{-\frac{3}{4}-\frac{k}{2}}\|f\|_{L^1\cap H^k}$,
		\item $\left\|\partial_x^k \left(  \check{\mathcal{B}}(t,x)\ast f \right)\right\|_{L^2}\leq C(1+t)^{-\frac{3}{4}-\frac{k}{2}}\|f\|_{L^1\cap H^k}$,
		\item $\left\| \partial_x^k \left( \check{\mathcal{C}}(t,x)\ast f \right)\right\|_{L^2}\leq C(1+t)^{-\frac{3}{4}-\frac{k}{2}}\|f\|_{L^1\cap H^k}$,
	\end{enumerate}
	where $C$ is a positive constant depending on $s$, $a$, $\mu$, $\kappa$ and $\Gamma$. Moreover, it holds that
	\begin{equation}\label{eq417}
		\left\| \partial_x^k \left( \nabla \check{\mathcal{A}}(t,x)\ast f \right)\right\|_{L^2}\lesssim e^{-a\Gamma t}\left((1+t)^{-\frac{5}{4}-\frac{k}{2}}\|f\|_{L^1} +\frac{1}{\sqrt{t}}\|f\|_{H^k}\right).
	\end{equation}
\end{lemma}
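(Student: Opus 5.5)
The approach is Plancherel plus a low/high frequency splitting of $\widehat{\partial^k(\check{\mathcal{K}}\ast f)}=(i\xi)^k\mathcal{K}(t,\xi)\hat f(\xi)$ for $\mathcal{K}\in\{\mathcal{A},\mathcal{B},\mathcal{C}\}$. We split at $|\xi|\le 1$ and $|\xi|>1$. On low frequencies we use $\|\hat f\|_{L^\infty}\le\|f\|_{L^1}$. On high frequencies we use $\|\xi^k\hat f\|_{L^2}\lesssim\|f\|_{H^k}$. The basic low-frequency computation is
\[
\int_{|\xi|\le1}|\xi|^{2k}e^{-2\nu|\xi|^2t}d\xi\lesssim(1+t)^{-\frac32-k},\qquad \nu>0,
\]
which gives the algebraic factor $(1+t)^{-\frac34-\frac k2}$ after taking square roots.

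For (i), $|\mathcal{A}|=e^{-a\Gamma t}e^{-\Gamma|\xi|^2t}$. The low part is bounded by $e^{-a\Gamma t}(1+t)^{-\frac34-\frac k2}\|f\|_{L^1}$. On the high part, $e^{-\Gamma|\xi|^2t}\le e^{-\Gamma t}\lesssim(1+t)^{-N}$, so it is bounded by $e^{-a\Gamma t}(1+t)^{-\frac34-\frac k2}\|f\|_{H^k}$. Part (iii) is identical with $\mu$ in place of $\Gamma$ and no exponential factor.

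For \eqref{eq417} the low part carries an extra $|\xi|$, giving $(1+t)^{-\frac54-\frac k2}\|f\|_{L^1}$. On the high part we write $|\xi|e^{-\Gamma|\xi|^2t}\le \sup_r re^{-\Gamma r^2t}\lesssim t^{-1/2}$, which yields $e^{-a\Gamma t}t^{-1/2}\|f\|_{H^k}$.

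The main obstacle is (ii), because of the denominator $\mu|\xi|^2-\Gamma|\xi|^2-a\Gamma$, which can vanish in case $\mu>\Gamma$. The key step is the uniform bound
\[
|\mathcal{B}(t,\xi)|\le |\kappa|\, t\, e^{-\min\{\Gamma(|\xi|^2+a),\,\mu|\xi|^2\}t},
\]
obtained from the representation $\mathcal{B}=\kappa\int_0^t e^{-\mu|\xi|^2(t-\tau)}e^{-\Gamma(|\xi|^2+a)\tau}d\tau$. This formula is valid in every case of Lemma~\ref{lem4.1}, including the resonant sphere, via \eqref{eq4.7}. It handles regions near the resonance, but the factor $t$ is harmful, so the frequency space is further split.

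On low frequencies with $|\xi|\le r_0$ small, the denominator satisfies $|\mu|\xi|^2-\Gamma|\xi|^2-a\Gamma|\ge a\Gamma/2$, since $(\mu-\Gamma)|\xi|^2$ is small relative to $a\Gamma$. Hence $|\mathcal{B}|\lesssim e^{-\Gamma a t}+e^{-\mu|\xi|^2t}$, and the heat-type computation gives $(1+t)^{-\frac34-\frac k2}\|f\|_{L^1}$.

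On $|\xi|\ge r_0$ we use the integral bound. The exponent satisfies $\min\{\Gamma(|\xi|^2+a),\mu|\xi|^2\}\ge c_0(1+|\xi|^2)$ with $c_0>0$ depending on $r_0,a,\mu,\Gamma$. Then $t e^{-c_0(1+|\xi|^2)t}\lesssim e^{-c_0t/2}$ decays exponentially, and summing with $|\xi|^{2k}|\hat f|^2$ gives $(1+t)^{-N}\|f\|_{H^k}$. Combining the two regions yields (ii), with constants depending only on $a,\mu,\kappa,\Gamma$ and $k$.
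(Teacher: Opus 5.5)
Your proof is correct. For (i), (iii) and \eqref{eq417} it is the paper's argument: Plancherel, a low/high frequency split, $\|\hat f\|_{L^\infty}\le\|f\|_{L^1}$ on low frequencies, the $H^k$ norm on high frequencies, and $|\xi|^2e^{-2\Gamma|\xi|^2t}\lesssim t^{-1}$ for the gradient.

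One small point in your favour concerns (i). You keep the factor $e^{-\Gamma|\xi|^2t}\le e^{-\Gamma t}$ on the high-frequency part. The paper's displayed chain for (i) bounds that part only by $e^{-2a\Gamma t}\|f\|_{H^k}^2$, which does not by itself carry the factor $(1+t)^{-\frac32-k}$. Your version is what is actually needed there; the paper does retain the analogous factor for $\mathcal{C}$.

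For (ii) your route is genuinely different. The paper treats only $\mu>\Gamma$ and works with the resonant radius $R=\sqrt{a\Gamma/(\mu-\Gamma)}$. It splits frequency space into three regions:
\begin{itemize}
\item on $|\xi|<R-\epsilon$ and $|\xi|>R+\epsilon$ it uses the explicit formula, with the denominator $x:=(\mu-\Gamma)|\xi|^2-a\Gamma$ bounded below by $\epsilon\sqrt{(\mu-\Gamma)a\Gamma}$ and $\epsilon^2(\mu-\Gamma)$ respectively;
\item on the shell $R-\epsilon\le|\xi|\le R+\epsilon$ it uses $|(e^{xt}-1)/x|\le te^{\max\{x,0\}t}$, which gives a factor $t^2e^{-\frac12\mu R^2t}$.
\end{itemize}

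Your Duhamel representation $\mathcal{B}=\kappa\int_0^te^{-\mu|\xi|^2(t-\tau)}e^{-\Gamma(|\xi|^2+a)\tau}\,d\tau$ gives this same shell-type bound $|\kappa|\,t\,e^{-\min\{\Gamma(|\xi|^2+a),\mu|\xi|^2\}t}$ on all of $\R^3$ at once. You then need the explicit formula only on a small ball, where the denominator is at least $a\Gamma/2$. This removes the case distinction $\mu\le\Gamma$ versus $\mu>\Gamma$, the separate formula on the resonant sphere, and the choice of $\epsilon<R$.

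What the paper's three-region split buys is a bound without the factor $t$ on the outer region. Nothing in the lemma needs that extra information, so your two-region argument is the more economical one.
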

\begin{proof}
First, we deal with the estimate of $\check{\mathcal{A}}(t,x)$. Following \cite{Liu1987,Matsumura76}, by virtue of the Hausdorff-Young inequality, we have
\begin{equation}
	\begin{split}
		\Big\| \partial_x^k  (\check{\mathcal{A}}\ast f) \Big\|_{L^2}^2 =&  \int_{\R^3}|\xi|^{2k} \left| \mathcal{A}(t,\xi)\right|^2  |\hat{f}(\xi)|^2d\xi	\overset{\bigtriangleup}{=} \underbrace{\int_{|\xi|\geq R}}_{\text{\uppercase\expandafter{\romannumeral1}}_1} + \underbrace{\int_{|\xi|< R}}_{\text{\uppercase\expandafter{\romannumeral1}}_2}\\
		\leq & e^{-2a\Gamma t}\int_{|\xi|< R} |\xi|^{2k} e^{-2\Gamma |\xi|^2 t}  |\hat{f}(\xi)|^2d\xi + e^{-2a\Gamma t}\int_{|\xi|\geq R} |\xi|^{2k}   |\hat{f}(\xi)|^2d\xi\\
		\leq & Ce^{-2a\Gamma t}(1+t)^{-\frac{3}{2}-k}\|\hat{f}(\xi)\|_{L^\infty}^2 + e^{-2a\Gamma t} \|f\|_{H^k}^2\\
		\leq & Ce^{-2a\Gamma t}(1+t)^{-\frac{3}{2}-k}\|f\|_{L^1\cap H^k}^2.
	\end{split}
\end{equation}
Similarly, one can see that
\begin{equation}\label{eq4.18}
	\begin{split}
		\Big\| \partial_x^k  (\check{\mathcal{C}}\ast f) \Big\|_{L^2}^2 =&  \int_{\R^3}|\xi|^{2k} \left| \mathcal{C}(t,\xi)\right|^2  |\hat{f}(\xi)|^2d\xi	\overset{\bigtriangleup}{=} \underbrace{\int_{|\xi|\geq R}}_{\text{\uppercase\expandafter{\romannumeral2}}_1} 
		+ \underbrace{\int_{|\xi|< R}}_{\text{\uppercase\expandafter{\romannumeral2}}_2}\\
		\leq & \int_{|\xi|< R} |\xi|^{2k} e^{-2\mu |\xi|^2 t}  |\hat{f}(\xi)|^2d\xi + e^{-2\mu R^2 t}\int_{|\xi|\geq R} |\xi|^{2k}   |\hat{f}(\xi)|^2d\xi\\
		\leq & C(1+t)^{-\frac{3}{2}-k}\|\hat{f}(\xi)\|_{L^\infty}^2 + e^{-\mu R^2 t} \|f\|_{H^k}^2\\
		\leq & C(1+t)^{-\frac{3}{2}-k}\|f\|_{L^1\cap H^k}^2,
	\end{split}
\end{equation}
which completes the estimate of $\check{\mathcal{C}}(t,x)$.

Next, we show the estimate of $\check{\mathcal{B}}(t,x)$. From \eqref{eqB-1} and \eqref{eqB-2}, it can be seen that the expression of $\mathcal{B}(t,\xi)$ depends largely on the viscosity coefficients $\mu$ and $\Gamma^{-1}$. However, one can also see that we only need to prove that for $\mu> \Gamma$ and $a> 0$.
Hence, let $\epsilon>0$ be a small fixed number and $R:= \sqrt{\frac{a\Gamma}{\mu-\Gamma}}$, we infer that
\begin{align}\label{eq4.19}
	\Big\| \partial_x^k  (\check{\mathcal{B}}\ast f) \Big\|_{L^2}^2 =&  \int_{\R^3}|\xi|^{2k} \left| \mathcal{B}(t,\xi)\right|^2  |\hat{f}(\xi)|^2d\xi\nonumber\\	
	\overset{\bigtriangleup}{=}&  \underbrace{\int_{|\xi|\geq R+\epsilon}}_{\text{\uppercase\expandafter{\romannumeral3}}_1}+   \underbrace{\int_{R-\epsilon\leq |\xi|\leq  R+\epsilon}}_{\text{\uppercase\expandafter{\romannumeral3}}_2} 
	+ \underbrace{\int_{|\xi|< R-\epsilon}}_{\text{\uppercase\expandafter{\romannumeral3}}_3}\nonumber\\
	\leq & C\|\hat{f}(\xi)\|_{L^\infty}^2\int_{|\xi|< R-\epsilon} |\xi|^{2k} e^{-2\min\{\mu,\Gamma\} |\xi|^2 t} d\xi\nonumber\\
	& + Ce^{-2\min\{\mu(R+\epsilon)^2, \Gamma[(R+\epsilon)^2 +a]\}t}\int_{|\xi|> R+\epsilon} |\xi|^{2k}   |\hat{f}(\xi)|^2d\xi\\
	& +  \kappa^2\int_{R-\epsilon\leq|\xi|\leq R+\epsilon} |\xi|^{2k}  \left(\dfrac{e^{( \mu|\xi|^2-\Gamma  |\xi|^2-a\Gamma )t} -1}{\mu|\xi|^2 -\Gamma |\xi|^2-a\Gamma}\right)^2 e^{-2\mu|\xi|^2t} |\hat{f}(\xi)|^2d\xi  \nonumber\\
	\leq & C(1+t)^{-\frac{3}{2}-k}\|f\|_{L^1\cap H^k}^2 + \kappa^2t^2e^{-\frac{1}{2}\mu R^2t} \int_{R-\epsilon\leq|\xi|\leq R+\epsilon} |\xi|^{2k} |\hat{f}(\xi)|^2d\xi\nonumber\\
	\leq & C(1+t)^{-\frac{3}{2}-k}\|f\|_{L^1\cap H^k}^2 + Ce^{-\frac{1}{4}\mu R^2t} \|f\|_{H^k}^2\nonumber\\
	\leq & C(1+t)^{-\frac{3}{2}-k}\|f\|_{L^1\cap H^k}^2,\nonumber
\end{align}
where we have used the fact that
\begin{equation}
	\left| \dfrac{\kappa}{\mu|\xi|^2 -\Gamma |\xi|^2-a\Gamma} \right|\leq
	\begin{cases}
		\dfrac{\kappa}{\epsilon\sqrt{\mu-\Gamma}\sqrt{a\Gamma}}, & \text{$|\xi|<R-\epsilon$,}\\[2ex]
		\dfrac{\kappa}{\epsilon^2(\mu-\Gamma)}, & \text{$|\xi|>R+\epsilon$.}\\
	\end{cases}
\end{equation}

Finally, we estimate $\nabla \check{\mathcal{A}}(t,x)$ in the following:
\begin{align*}
	\Big\| \partial_x^k  (\nabla \check{\mathcal{A}}\ast f) \Big\|_{L^2}^2 =&  \int_{\R^3}|\xi|^{2k+2} \left| \mathcal{A}(t,\xi)\right|^2  |\hat{f}(\xi)|^2d\xi\\
	\leq & e^{-2a\Gamma t}\int_{|\xi|< R} |\xi|^{2k+2} e^{-2\Gamma |\xi|^2 t}  |\hat{f}(\xi)|^2d\xi + Ce^{-2a\Gamma t}\int_{|\xi|\geq R} |\xi|^{2k}\cdot t^{-1}  |\hat{f}(\xi)|^2d\xi\\
	\leq & Ce^{-2a\Gamma t}(1+t)^{-\frac{5}{2}-k}\|\hat{f}(\xi)\|_{L^\infty}^2 + Ct^{-1}e^{-2a\Gamma t} \|f\|_{H^k}^2\\
	\leq & Ce^{-2a\Gamma t}\left((1+t)^{-\frac{5}{2}-k}\|f\|_{L^1}^2 +t^{-1}\|f\|_{H^k}^2\right),
\end{align*}
which yields \eqref{eq417} and this completes the proof.
\end{proof}
Based on Lemma~\ref{lem4.1}, Corollary~\ref{cor4.1} and Lemma~\ref{lem4.2},
we then conclude this section with the following result regarding decay estimates for the linearized system~\eqref{eq2.1}, which indicates that possibly stable mixing estimates can be expected for the simplified active system \eqref{eq1.1}.
\begin{theorem}\label{thm4.1}
Let $(Q_L,u_L)$ be the solution of system \eqref{eq2.1}, then it can be expressed via the Green function $\mathbb{G}_{Q,u}(t,x)$, i.e.,
\[
\begin{pmatrix}
	Q_L(t,x)\\
	u_L(t,x)
\end{pmatrix}
=
\mathbb{G}_{Q,u}(t,x) \ast
\begin{pmatrix}
	Q_0(x)\\
	u_0(x)
\end{pmatrix}.
\]
In addition, for any $s \in \mathbb{N}$, one has
\begin{equation}\label{eq4.23}
	\begin{split}
		\left\| \partial_x^k Q_L(t,x)\right\|_{L^2} &\lesssim\widebar{E}_0 e^{-a\Gamma t}(1+t)^{-\frac{3}{4}-\frac{k}{2}},\quad 0\leq k\leq s+1,\\
		\left\| \partial_x^k u_L(t,x)\right\|_{L^2} &\lesssim\widebar{E}_0(1+t)^{-\frac{3}{4}-\frac{k}{2}},\hspace{0.9cm}\quad 0\leq k\leq s.
	\end{split}
\end{equation}
\end{theorem}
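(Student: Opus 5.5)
The plan is to read everything off the explicit Fourier representation in Corollary~\ref{cor4.1} and then apply the kernel bounds of Lemma~\ref{lem4.2} to the initial data. The representation $(Q_L,u_L)=\mathbb{G}_{Q,u}\ast(Q_0,u_0)$ is simply the inverse Fourier transform of Corollary~\ref{cor4.1}. Indeed, $\widehat{Q_L}=\mathcal{A}\widehat{Q_0}$ and $\widehat{u_L}=\mathcal{B}\,\widehat{\sigma_0}+\mathcal{C}\,\widehat{u_0}$ with $\sigma_0=\mathbb{P}{\rm div}Q_0$. This identification follows from the uniqueness of solutions of the ODE system \eqref{eq1.17} for each fixed $\xi$; the case $|\xi|=R$ in \eqref{eqB-2} is a null set and does not affect the $L^2$ norms.

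\textbf{Bound for $Q_L$.} Here $Q_L=\check{\mathcal{A}}\ast Q_0$. Lemma~\ref{lem4.2}(i) with $f=Q_0\in L^1\cap H^{s+1}$ gives, for $k\le s+1$,
\[
\|\partial^k Q_L\|_{L^2}\lesssim e^{-a\Gamma t}(1+t)^{-\frac34-\frac k2}\|Q_0\|_{L^1\cap H^{k}}\lesssim \widebar{E}_0\, e^{-a\Gamma t}(1+t)^{-\frac34-\frac k2}.
\]

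\textbf{Bound for $u_L$.} We write $u_L=\check{\mathcal{C}}\ast u_0+\check{\mathcal{B}}\ast\sigma_0$. Lemma~\ref{lem4.2}(iii) with $f=u_0\in L^1\cap H^s$ handles the first term for $k\le s$.

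The second term is the main obstacle. The datum $\sigma_0=\mathbb{P}{\rm div}Q_0$ is not obviously in $L^1$, since $\mathbb{P}$ is not bounded on $L^1$, so Lemma~\ref{lem4.2}(ii) cannot be applied to $f=\sigma_0$ directly. I would avoid this by redoing the Fourier splitting of \eqref{eq4.19} with the symbol $m(\xi)=i(\mathbb{I}_3-\xi\otimes\xi/|\xi|^2)\xi$, which satisfies $|m(\xi)|\le|\xi|$, acting on $\widehat{Q_0}$. This gives
\[
\int|\xi|^{2k}|\mathcal{B}|^2|m(\xi)|^2|\widehat{Q_0}|^2\,d\xi\le\int|\xi|^{2k+2}|\mathcal{B}|^2|\widehat{Q_0}|^2\,d\xi,
\]
and I then split into the same three regions as in \eqref{eq4.19}:
\begin{itemize}
\item On the low-frequency region $|\xi|<R-\epsilon$, I use $|\mathcal{B}|\lesssim e^{-\min\{\mu,\Gamma\}|\xi|^2t}$ together with $\|\widehat{Q_0}\|_{L^\infty}\le\|Q_0\|_{L^1}$. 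This yields $(1+t)^{-\frac52-k}$, which is even better than required.
\item On the middle shell $R-\epsilon\le|\xi|\le R+\epsilon$, the bound $|\mathcal{B}|\lesssim te^{-\mu|\xi|^2t}$ gives exponential decay times $\|Q_0\|_{H^{k+1}}$.
\item On the high-frequency region $|\xi|>R+\epsilon$, $|\mathcal{B}|$ is exponentially small times $|\xi|^{-2}$ or better. This again gives exponential decay times $\|Q_0\|_{H^{k+1}}$.
\end{itemize}
Since $k\le s$, we have $k+1\le s+1$, so these terms are controlled by $\widebar{E}_0$.

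Alternatively, one can observe that the proof of Lemma~\ref{lem4.2}(ii) only used $\|\hat f\|_{L^\infty}$ and $\|f\|_{H^k}$. Taking $f=\sigma_0$, we have $\|\widehat{\sigma_0}\|_{L^\infty(|\xi|<R)}\le R\|Q_0\|_{L^1}$ and $\|\sigma_0\|_{H^k}\le\|Q_0\|_{H^{k+1}}$, so the same conclusion follows. Summing the contributions of $\check{\mathcal{C}}\ast u_0$ and $\check{\mathcal{B}}\ast\sigma_0$ gives \eqref{eq4.23} for $u_L$ with $0\le k\le s$.
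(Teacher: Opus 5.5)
Your proposal is correct and follows the paper's route: the paper states Theorem~\ref{thm4.1} as a direct consequence of the Fourier representation in Corollary~\ref{cor4.1} and the kernel bounds of Lemma~\ref{lem4.2}, and your extra treatment of $\check{\mathcal{B}}\ast\sigma_0$ supplies the point the paper leaves implicit here. Namely, $\sigma_0=\mathbb{P}{\rm div}Q_0$ need not lie in $L^1$, so one uses $|\widehat{\sigma_0}(\xi)|\le|\xi|\,|\widehat{Q_0}(\xi)|$, which is exactly what the paper does explicitly later in \eqref{eq5.31}.

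One small correction: on the part $R<|\xi|\le R+\epsilon$ of the middle shell, the bound $|\mathcal{B}|\lesssim t e^{-\mu|\xi|^2 t}$ is not uniform in $t$. The mean value theorem gives instead $|\mathcal{B}|\le|\kappa|\,t\,e^{-\min\{\mu|\xi|^2,\,\Gamma(|\xi|^2+a)\}t}$, which still decays exponentially on the whole shell, so your conclusion is unaffected.
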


\section{Decay estimates for the nonlinear system}\label{Sec.5}
In this section, we shall investigate the time-decay properties of the global solution $(Q,u)$ established in Theorem~\ref{thm1.2} for the case of $s\geq 4$.
In particular, we consider that the solution to \eqref{eq4.1} satisfies the following Duhamel formulation:
\begin{align}
	Q(t) =& Q_L(t) +\int_0^t  \check{\mathcal{A}}(t-s) \ast \left( f_2(s)-{\rm div}f_1(s) \right)  ds,\label{eq5.1}
	\\
	u(t) =& u_L(t) +\int_0^t  \check{\mathcal{B}}(t-s)\ast \mathbb{P}{\rm  div}\left( f_2(s)-{\rm div}f_1(s) \right) ds -\int_0^t  \check{\mathcal{C}}(t-s) \ast \mathbb{P}{\rm  div}f_3(s)  ds.\label{eq-5.2}
\end{align}
Hence, one can perform the $L^2$ time-decay estimates for $\partial^k Q$ and $\partial^k u$ via these integral representations of the solution.
However, we cannot expect optimal time-decay rates for all the derivatives of the solution in general,
because the nonlinear coupling terms in \eqref{eq4.2} involve certain highest derivatives in $Q$ and $u$ (such as $\nabla u$, ${\rm div}\Delta Q$), which eventually bring some difficulties in deriving the desired decay properties via the Green's function method. 
To overcome these obstacles, we first present the time-weighted estimates for the global solutions of system \eqref{eq1.1}, which will provide some decay properties for the highest derivatives of solutions (but not optimal). This enables us to solve the problem regarding loss of derivatives in nonlinear decay estimates.

\subsection{Time-weighted energy estimates}

Similar to our recent work \cite{Yang-blood}, we introduce the following time-weighted quantities:
\begin{equation*}
	\begin{split}
		\mathcal{N}(t):=&\sum_{k=0}^{s} (1+t)^k\left( \|\partial^k u\|_{H^{s-k}}^2 + M\|\partial^k Q\|_{H^{s-k}}^2 + \|\partial^k \nabla Q\|_{H^{s-k}}^2 \right),\\
		\mathcal{M}(t):=&\sum_{k=0}^{s} (1+t)^k\left( \dfrac{\mu}{2}\|\partial^{k}\nabla u\|_{H^{s-k}}^2 + \dfrac{aM\Gamma}{2} \|\partial^{k} Q\|_{H^{s-k}}^2+ (a+M)\Gamma\|\partial^{k}\nabla Q\|_{H^{s-k}}^2 + \Gamma\|\partial^k\Delta
		Q\|_{H^{s-k}}^2 \right),
	\end{split}
\end{equation*}
where $M$ is chosen to be the same as in \eqref{def-M}. Then, we employ an iteration argument to obtain 
the following time-weighted energy estimates for the global solution of system \eqref{eq1.1}.

\begin{prop}\label{prop5.1}
	Let $s\geq 4$ be an integer and $a>0$, $E_0<+\infty$.
	Suppose that $(Q,u)$ is the unique global solution established in Theorem \ref{thm1.2}. Then, it holds
	\begin{equation}\label{eq5.2}
		\dfrac{d}{dt}\mathcal{N}(t)+ \mathcal{M}(t)
		\leq C \sum_{m=1}^3 \mathcal{N}^\frac{m}{2}(t)\mathcal{M}(t),
	\end{equation}
	where $C$ is a positive constant depending only on $s$, $a$, $b$, $c$, $\mu$, $\kappa$, $\lambda$ and $\Gamma$.
\end{prop}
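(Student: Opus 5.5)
The plan is to redo the energy estimate of Proposition~\ref{prop3.1} one derivative level at a time, attach the time weights $(1+t)^k$, and then handle the single new term that comes from differentiating the weight. For each $0\leq k\leq s$ and each multi-index $\alpha$ with $k\leq|\alpha|\leq s$, we apply $\partial^\alpha$ to \eqref{eq1.1}. We test the $u$-equation with $\partial^\alpha u$ and the $Q$-equation with $M\partial^\alpha Q-\partial^\alpha\Delta Q$, take the trace, and integrate by parts, exactly as in \eqref{eq3.7}. The cancellations of Lemma~\ref{lem.cancel} and the two cancellation identities used in Proposition~\ref{prop2.1} again remove the top-order coupling terms $Q\partial^\alpha\Delta Q-\partial^\alpha\Delta QQ$ and $|Q|\partial^\alpha\Delta Q$.

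The linear active term $\kappa\langle\partial^\alpha Q:\partial^\alpha\nabla u\rangle$ is absorbed by \eqref{linear-term} with the same $M$ from \eqref{def-M}. This gives, level by level,
\begin{equation*}
\frac12\frac{d}{dt}\mathcal{E}_\alpha+\mathcal{D}_\alpha\leq \mathcal{R}_\alpha ,
\end{equation*}
where $\mathcal{E}_\alpha,\mathcal{D}_\alpha$ are the $\alpha$-parts of the integrands of $\mathcal{N},\mathcal{M}$ and $\mathcal{R}_\alpha$ collects the nonlinear terms $L_1$–$L_{14}$ at level $\alpha$.

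Next we multiply by $(1+t)^k$ and sum over $k$ and over $|\alpha|\geq k$. The derivative of the weight produces
\begin{equation*}
k(1+t)^{k-1}\Big(\|\partial^\alpha u\|_{L^2}^2+M\|\partial^\alpha Q\|_{L^2}^2+\|\partial^\alpha\nabla Q\|_{L^2}^2\Big),\qquad |\alpha|\geq k .
\end{equation*}
We control this by the $(k-1)$ level of $\mathcal{M}$ as follows.
\begin{itemize}
\item Since $|\alpha|\geq k\geq1$, $\partial^\alpha u$ is a derivative of order at least $k-1$ of $\nabla u$, so it is bounded by $(1+t)^{k-1}\|\partial^{k-1}\nabla u\|_{H^{s-k+1}}^2$.
\item $\partial^\alpha Q$ and $\partial^\alpha\nabla Q$ are bounded by $(1+t)^{k-1}\big(\|\partial^{k-1}\nabla Q\|_{H^{s-k+1}}^2+\|\partial^{k-1}\Delta Q\|_{H^{s-k+1}}^2\big)$.
\end{itemize}
These appear in $\mathcal{M}$ with weight $(1+t)^{k-1}$, but only with an $O(1)$ coefficient, not a small one. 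To absorb them we replace $\mathcal{N}$ by the equivalent functional $\sum_k \varepsilon^k(1+t)^k(\cdots)$ (equivalently, multiply level $k$ by a constant $\varepsilon^k$), with $\varepsilon$ chosen small depending on $s,\mu,\Gamma,a,M$. Then $k\varepsilon^k(1+t)^{k-1}(\cdots)\leq \tfrac12\varepsilon^{k-1}\cdot(\text{level }k-1\text{ dissipation})$, and the chain closes. This is the iteration argument; the constants are then renormalised into $C$.

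The nonlinear part needs the bound $(1+t)^k|\mathcal{R}_\alpha|\lesssim \sum_{m=1}^3\mathcal{N}^{m/2}\mathcal{M}$. Here lies the main obstacle: every term must carry exactly weight $(1+t)^k$ split among its factors. For a product such as $\partial^{\beta}f\,\partial^{\alpha-\beta}g\cdot\partial^\alpha h$, we assign weight $(1+t)^{k/2}$ to the factor $\partial^\alpha h$, which belongs to $\mathcal{M}^{1/2}$ at level $k$. The remaining $(1+t)^{k/2}$ goes to whichever of $\partial^\beta f$ or $\partial^{\alpha-\beta}g$ carries at least $\lceil k/2\rceil$ derivatives; in fact we can put $(1+t)^{|\beta|/2}$ and $(1+t)^{(|\alpha|-|\beta|)/2}$ on the two factors. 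The low factor is put in $L^\infty$ via $H^2$ and is measured in $\mathcal{N}^{1/2}$ or $\mathcal{M}^{1/2}$ at its own weight level. This is where $s\geq4$ is used: the $L^\infty$ factors like $\nabla u$ and $\Delta Q$, with up to two extra derivatives, must stay within the $H^{s-j}$ range at the corresponding weight. For the terms involving $|Q|$ we use Lemma~\ref{lem-A6}. The commutators are handled by Lemma~\ref{new-com} or the Moser-type inequality of Lemma~\ref{lem-a1} at the appropriate weight.

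Care is required with the undifferentiated dissipation. At level $k\geq1$, $\mathcal{M}$ contains $\|\partial^k u\|$ only through $\nabla u$, so any factor $u$ with no derivative must be placed in $L^\infty$ with a weight-free $\mathcal{N}^{1/2}$. The weight must then fall entirely on the other two factors, one from $\mathcal{N}$ or $\mathcal{M}$ and the top factor from $\mathcal{M}$. Checking this term by term for $L_1$–$L_{14}$ is routine but long, and it is the step where I expect the bookkeeping to be the real difficulty.
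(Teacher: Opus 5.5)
Your handling of the term produced by differentiating the weight is correct and matches the paper. The paper multiplies the level-$(k-1)$ inequality by a large constant and adds the level-$k$ one, which is your $\varepsilon^k$ rescaling. In both versions the inequality is really obtained for an equivalent functional with level-dependent coefficients, and that is all that is used afterwards.

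\textbf{The gap.} It lies in the nonlinear bookkeeping, exactly in the step you call routine. You assume the test factor $\partial^\alpha h$ always lies in $\mathcal{M}^{1/2}$ at level $k$. This holds for the $Q$-equation (test functions $\partial^\alpha Q$ and $\partial^\alpha\Delta Q$, using $a>0$). It also holds for every $u$-equation term tested against $\partial^\alpha\nabla u$.

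It fails for the one term tested against $\partial^\alpha u$ itself, namely the convection commutator $(1+t)^k\langle[\partial^\alpha,u\cdot\nabla]u,\partial^\alpha u\rangle$ with $|\alpha|=k$. The functional $\mathcal{M}$ contains $\partial^k u$ only at level $k-1$, that is, with weight $(1+t)^{k-1}$.

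The Moser bound gives $(1+t)^k\|\nabla u\|_{L^\infty}\|\partial^k u\|_{L^2}^2$, and both $L^2$ factors are of this deficient type. Your options for the $L^\infty$ factor do not close:
\begin{itemize}
\item Bounding $\nabla u$ in $L^\infty$ at its own level via $H^2$ leaves $(1+t)^{1/2}\mathcal{N}^{1/2}\mathcal{M}$.
\item Bounding it weight-free by $\mathcal{M}^{1/2}$ leaves $\mathcal{M}^{1/2}\mathcal{N}$.
\item Your fallback for undifferentiated $u$, applied to the integrated-by-parts form $\langle\partial^\alpha(u\otimes u),\partial^\alpha\nabla u\rangle$, does no better. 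With $\|u\|_{L^\infty}\le\mathcal{N}^{1/2}$ weight-free, the middle factor $\partial^\alpha u$ can carry the weight $(1+t)^{k/2}$ only inside $\mathcal{N}$, giving $\mathcal{N}\mathcal{M}^{1/2}$.
\end{itemize}
None of these is bounded by $\sum_m\mathcal{N}^{m/2}\mathcal{M}$, because $\mathcal{M}$ does not control $(1+t)^k\|\partial^k u\|_{L^2}^2$.

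\textbf{The missing ingredient.} The weighted structure of $\mathcal{N}$, combined with Gagliardo--Nirenberg, gives low-order $L^\infty$ factors more time decay than their own level provides:
\[
\|\nabla u\|_{L^\infty}\lesssim\|\partial^3u\|_{L^2}^{3/4}\|\nabla u\|_{L^2}^{1/4}\lesssim(1+t)^{-\frac54}\mathcal{N}^{\frac12}(t).
\]
This recovers the missing full power of $(1+t)$:
\[
(1+t)^k\|\nabla u\|_{L^\infty}\|\partial^k u\|_{L^2}^2\lesssim(1+t)\,(1+t)^{-\frac54}\mathcal{N}^{\frac12}\cdot(1+t)^{k-1}\|\partial^{k-1}\nabla u\|_{H^{s-k+1}}^2\lesssim\mathcal{N}^{\frac12}\mathcal{M}.
\]
If you integrate by parts first, use instead $\|u\|_{L^\infty}\lesssim\|\nabla u\|_{L^2}^{1/2}\|\nabla^2u\|_{L^2}^{1/2}\lesssim(1+t)^{-3/4}\mathcal{N}^{1/2}$.

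This is exactly how the paper estimates $\mathcal{Z}_3$, using the interpolation bounds listed at the start of its proof (ending with \eqref{eq5.8}). It requires $\partial^3 u$ to be controlled in $\mathcal{N}$ with weight $(1+t)^3$, i.e.\ $s\ge 3$. With this added, the remaining terms fit your scheme as in the paper. Without it, the claimed differential inequality does not close.
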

\begin{proof}
First of all, for any fixed $k\in [1,s]$, we aim to prove that
\begin{equation}\label{eq5.3}
	\begin{split}
		\dfrac{1}{2}\dfrac{d}{dt}\Big[ &(1+t)^k\Big( \|\partial^lu\|_{L^2}^2 +  M\|\partial^lQ\|_{L^2}^2 +  \|\partial^l\nabla Q\|_{L^2}^2 \Big)\Big]\\
		&+ (1+t)^k\left(\dfrac{\mu}{2}\|\partial^{l}\nabla u\|_{L^2}^2 + \dfrac{aM\Gamma}{2} \|\partial^{l} Q\|_{L^2}^2+ (a+M)\Gamma\|\partial^{l}\nabla Q\|_{L^2}^2 + \Gamma\|\partial^l\Delta
		Q\|_{L^2}^2 \right) \\
		\leq & \frac{k}{2}(1+t)^{k-1}\Big( \|\partial^lu\|_{L^2}^2 +  M\|\partial^lQ\|_{L^2}^2 +  \|\partial^l\nabla Q\|_{L^2}^2 \Big) + C\sum_{m=1}^3 \mathcal{N}^\frac{m}{2}(t)\mathcal{M}(t)
	\end{split}
\end{equation}
holds for all $k\leq |l|\leq s$.
Indeed, it follows from \eqref{linear-term}, \eqref{def-M}, \eqref{eq3.7} that
\begin{equation}\label{eq:5.4}
	\begin{split}
		\dfrac{1}{2}\dfrac{d}{dt}\Big[ &(1+t)^k\Big( \|\partial^lu\|_{L^2}^2 +  M\|\partial^lQ\|_{L^2}^2 +  \|\partial^l\nabla Q\|_{L^2}^2 \Big)\Big]\\
		&+ (1+t)^k\left(\dfrac{\mu}{2}\|\partial^{l}\nabla u\|_{L^2}^2 + \dfrac{aM\Gamma}{2} \|\partial^{l} Q\|_{L^2}^2+ (a+M)\Gamma\|\partial^{l}\nabla Q\|_{L^2}^2 + \Gamma\|\partial^l\Delta
		Q\|_{L^2}^2 \right) \\
		\leq & \frac{k}{2}(1+t)^{k-1}\Big( \|\partial^lu\|_{L^2}^2 +  M\|\partial^lQ\|_{L^2}^2 +  \|\partial^l\nabla Q\|_{L^2}^2 \Big)  + (1+t)^k \left\langle u\cdot\nabla \partial^lQ,\partial^l\Delta
		Q  \right\rangle \\
		&+ (1+t)^k\left\langle [\partial^l,u]_{\rm div} Q,\partial^l\Delta
		Q-M\partial^lQ  \right\rangle -(1+t)^k \left\langle [\partial^l,u]_{\rm div} u,\partial^l u \right\rangle\\
		& +M(1+t)^k\left\langle \partial^l\Omega Q -  Q\partial^l\Omega,\partial^l Q\right\rangle +(1+t)^k\left\langle[\partial^l,(\Omega,Q)]_-,M\partial^l Q-\partial^l\Delta
		Q \right\rangle \\
		& +  (1+t)^k\left\langle  \partial^l\left( \nabla Q \odot\nabla Q\right),\partial^{l}\nabla u  \right\rangle + (1+t)^k\left\langle[\partial^l,(\Delta Q,Q)]_-, \partial^l\nabla u \right\rangle \\
		& + \lambda M(1+t)^k\left\langle \partial^l\left(  |Q|D\right),\partial^l Q  \right\rangle -a\lambda(1+t)^k \left\langle \partial^l\left(  |Q|Q\right),\partial^l \nabla u  \right\rangle  \\
		& - \lambda (1+t)^k\left\langle [\partial^l,|Q|] D,\partial^l\Delta Q  \right\rangle +\lambda (1+t)^k\left\langle [\partial^l,|Q|]\Delta Q,\partial^l\nabla u  \right\rangle \\
		& +\Gamma (1+t)^k \left\langle  \partial^l\left\lbrace b\left[Q^2-\dfrac{{\rm Tr}(Q^2)}{3}\mathbb{I}_3\right]-cQ{\rm Tr}(Q^2) \right\rbrace,M\partial^lQ -\partial^l\Delta Q  \right\rangle \\
		&+ \lambda(1+t)^k\left\langle \partial^l\left(|Q| \left\lbrace b\left[Q^2-\dfrac{{\rm Tr}(Q^2)}{3}\mathbb{I}_3\right]-cQ {\rm Tr}(Q^2) \right\rbrace\right),\partial^l\nabla u \right\rangle \\
		\overset{\bigtriangleup}{=}& \frac{k}{2}(1+t)^{k-1}\Big( \|\partial^lu\|_{L^2}^2 +  M\|\partial^lQ\|_{L^2}^2 +  \|\partial^l\nabla Q\|_{L^2}^2 \Big) +\sum_{j=1}^{13} \mathcal{Z}_j.
	\end{split}
\end{equation}
According to the definition of $\mathcal{N}(t)$, $\mathcal{M}(t)$ with $s\geq 4$, it is easy to check that
\begin{align}
	\|(Q,u)\|_{L^\infty}\lesssim& \|(Q,u)\|_{H^s}\leq  \mathcal{N}^{\frac{1}{2}}(t),\label{eq5.5}\\
	\|\nabla(|Q|,Q,u)\|_{L^\infty}\lesssim& \|\nabla(Q,u)\|_{H^s}\leq  \mathcal{M}^{\frac{1}{2}}(t),\label{eq5.6}\\
	\|\nabla (Q,u)\|_{L^\infty}\lesssim &\|\partial^3 Q\|_{L^2}^{\frac{3}{4}}\| \nabla Q\|_{L^2}^{\frac{1}{4}} + \|\partial^3 u\|_{L^2}^{\frac{3}{4}}\| \nabla u\|_{L^2}^{\frac{1}{4}}\leq \mathcal{N}^{\frac{1}{2}}(t)(1+t)^{-\frac{5}{4}},\\
	\|\Delta Q\|_{L^\infty}\lesssim &\|\partial^4 Q\|_{L^2}^{\frac{3}{4}}\| \partial^2 Q\|_{L^2}^{\frac{1}{4}}\leq \mathcal{N}^{\frac{1}{2}}(t)(1+t)^{-\frac{11}{8}}.\label{eq5.8}
\end{align}
Similarly to the proof of \eqref{eq3.23}, the remainder terms in the right-hand side of \eqref{eq:5.4} are estimated as follows:
\begin{align*}
		|\mathcal{Z}_1|+|\mathcal{Z}_4|\leq &  C(1+t)^k\left( \| u\|_{L^\infty}\|\partial^l\nabla Q\|_{L^2}\|\partial^l\Delta
		Q\|_{L^2}+ \| Q\|_{L^\infty}\|\partial^{l}\nabla u\|_{L^2}\|\partial^l Q  \|_{L^2}\right)\\
		\leq &  C\mathcal{N}^{\frac{1}{2}}(t)(1+t)^k\left( \|\partial^k \nabla Q\|_{H^{s-k}}\|\partial^k \Delta Q\|_{H^{s-k}}+ \|\partial^{k}\nabla u\|_{H^{s-k}}\|\partial^k Q  \|_{H^{s-k}}\right)\\
		\leq & C\mathcal{N}^{\frac{1}{2}}(t)\mathcal{M}(t),\\
		|\mathcal{Z}_2|\leq & C(1+t)^k  \left( \|\nabla u\|_{L^\infty} \|\partial^{l-1} \nabla  Q\|_{L^2}+ \|\nabla  Q\|_{L^\infty}\|\partial^{l} u\|_{L^2}\right)\left( \|\partial^{l}  Q \|_{L^2} + \|\partial^{l} \Delta  Q \|_{L^2} \right)\\
		\leq & C\mathcal{M}^{\frac{1}{2}}(t) \left[(1+t)^k\left( \|\partial^{k} Q\|_{H^{s-k}}+  \|\partial^{k} u \|_{H^{s-k}}\right) \left( \|\partial^{k}  Q \|_{H^{s-k}} + \|\partial^{k} \Delta  Q \|_{H^{s-k}} \right)\right]\\
		\leq & C\mathcal{N}^{\frac{1}{2}}(t)\mathcal{M}(t),\\
		|\mathcal{Z}_3|\leq & C(1+t)^k\left( \|\nabla u \|_{L^\infty}\|\partial^{l-1}\nabla u \|_{L^2}+\|\nabla u \|_{L^\infty} \|\partial^{l} u \|_{L^2}\right)  \|\partial^{l}  u \|_{L^2}\\
		\leq &  C(1+t)\cdot \mathcal{N}^{\frac{1}{2}}(t)(1+t)^{-\frac{5}{4}} \cdot (1+t)^{k-1} \|\partial^{k-1}\nabla u \|_{H^{s-k+1}}^2\\
		\leq & C\mathcal{N}^{\frac{1}{2}}(t)\mathcal{M}(t),\\
		|\mathcal{Z}_5|\leq & C(1+t)^k\left( \|\nabla Q\|_{L^\infty}\|\partial^{l-1}\nabla u \|_{L^2}+ \|\nabla u \|_{L^\infty}\|\partial^{l}Q \|_{L^2} \right)\left( \|\partial^{l}  Q \|_{L^2} + \|\partial^{l} \Delta  Q \|_{L^2}\right)\\
		\leq & C\mathcal{M}^{\frac{1}{2}}(t)  \left[(1+t)^k\left(   \|\partial^{k} u \|_{H^{s-k}} + \|\partial^{k} Q\|_{H^{s-k}}\right)\left( \|\partial^{k}  Q \|_{H^{s-k}} + \|\partial^{k} \Delta  Q \|_{H^{s-k}} \right)\right]\\
		\leq & C\mathcal{N}^{\frac{1}{2}}(t)\mathcal{M}(t),\\
		|\mathcal{Z}_6| + |\mathcal{Z}_7| \leq &  C(1+t)^k \Big[\|\nabla Q  \|_{L^\infty} \left(\|\partial^{l}\nabla Q  \|_{L^2} +  \|\partial^{l-1}\Delta Q \|_{L^2}\right) +  \| \Delta Q \|_{L^\infty}  \|\partial^{l}  Q \|_{L^2}  \Big] \|\partial^{l}\nabla u  \|_{L^2}\\
		\leq &  C\left(\|\nabla Q  \|_{L^\infty}+ \|\Delta Q  \|_{L^\infty} \right)\cdot \Big[ (1+t)^k\left(\|\partial^{k}\nabla Q  \|_{H^{s-k}} + \|\partial^{k} Q  \|_{H^{s-k}}\right)\|\partial^{k}\nabla u  \|_{H^{s-k}} \Big] \\
		\leq & C\mathcal{N}^{\frac{1}{2}}(t)\mathcal{M}(t),\\
		|\mathcal{Z}_8| + |\mathcal{Z}_{10}|\leq &  C(1+t)^{\frac{k}{2}}\mathcal{M}^{\frac{1}{2}}(t)\left( \|\nabla  |Q| \|_{L^\infty}  \|\partial^{l-1}\nabla u \|_{L^2} +  \|Q \|_{L^\infty}  \|\partial^{l}\nabla u \|_{L^2} +  \| \nabla u \|_{L^\infty}  \|\partial^{l}  |Q| \|_{L^2}  \right)   \\
		\leq & C\left[  \mathcal{M}(t) (1+t)^{\frac{k}{2}}\left(\| \partial^k u \|_{H^{s-k}} + \| \partial^k Q \|_{H^{s-k}} \right)+ \mathcal{M}^{\frac{1}{2}}(t)\mathcal{N}^{\frac{1}{2}}(t)(1+t)^{\frac{k}{2}}\| \partial^k \nabla u \|_{H^{s-k}} \right]  \\
		\leq & C\mathcal{N}^{\frac{1}{2}}(t)\mathcal{M}(t),\\
		|\mathcal{Z}_{11}| \leq &  C(1+t)^k\left(  \|\nabla  |Q| \|_{L^\infty}  \|\partial^{l-1}\Delta Q \|_{L^2} +  \| \Delta Q \|_{L^\infty}  \|\partial^{l}  |Q| \|_{L^2}  \right) \|\partial^l\nabla u \|_{L^2}\\
		\leq &   C (1+t)^{\frac{k}{2}}\| \partial^k\nabla u \|_{H^{s-k}}\cdot (1+t)^{\frac{k}{2}}\left(\mathcal{M}^{\frac{1}{2}}(t)\| \partial^k\nabla Q \|_{H^{s-k}} + \mathcal{N}^{\frac{1}{2}}(t)\| \partial^k Q \|_{H^{s-k}}  \right)  \\
		\leq & C\mathcal{N}^{\frac{1}{2}}(t)\mathcal{M}(t),\\
		|\mathcal{Z}_{9}| + |\mathcal{Z}_{12}| + |\mathcal{Z}_{13}|\leq & C(1+t)^{\frac{k}{2}}\mathcal{M}^{\frac{1}{2}}(t)\Big( \|\partial^l Q\|_{L^2} +\|\partial^l |Q|\|_{L^2} \Big)\sum_{m=1}^3\|Q\|_{L^\infty}^m\\
		\leq & C\sum_{m=1}^{3}\mathcal{N}^{\frac{m}{2}}(t)\mathcal{M}^{\frac{1}{2}}(t)\cdot(1+t)^{\frac{k}{2}}\| \partial^k Q \|_{H^{s-k}}\\
		\leq & C\sum_{m=1}^{3}\mathcal{N}^{\frac{m}{2}}(t)\mathcal{M}(t),
\end{align*}
where we have used the Cauchy-Schwarz inequality, the Moser-type calculus inequalities and Lemma~\ref{lem-A6}.

Combining all the above estimates, we immediately get \eqref{eq5.3}.
By summing up \eqref{eq5.3} for all $k\leq |l|\leq s$ again, we infer that
\begin{equation}\label{eq5.11}
	\begin{split}
		\dfrac{1}{2}\dfrac{d}{dt}&\Big[ (1+t)^k\Big( \|\partial^ku\|_{H^{s-k}}^2 +  M\|\partial^kQ\|_{H^{s-k}}^2 +  \|\partial^k\nabla Q\|_{H^{s-k}}^2 \Big)\Big]\\
		&+ (1+t)^k\left(\dfrac{\mu}{2}\|\partial^{k}\nabla u\|_{H^{s-k}}^2 + \dfrac{aM\Gamma}{2} \|\partial^{k} Q\|_{H^{s-k}}^2+ (a+M)\Gamma\|\partial^{k}\nabla Q\|_{H^{s-k}}^2 + \Gamma\|\partial^k\Delta
		Q\|_{H^{s-k}}^2 \right) \\
		\leq & \frac{k}{2}\underbrace{(1+t)^{k-1}\Big( \|\partial^ku\|_{H^{s-k}}^2 +  M\|\partial^kQ\|_{H^{s-k}}^2 +  \|\partial^k\nabla Q\|_{H^{s-k}}^2 \Big)}_{:=\Xi_{k}} + C\sum_{m=1}^3 \mathcal{N}^\frac{m}{2}(t)\mathcal{M}(t),
	\end{split}
\end{equation}
holds for all $1\leq k\leq s$.
Moreover, let us recall that, it follows from Proposition~\ref{prop3.1} that
\begin{equation}\label{eq5.18}
	\begin{split}
		\dfrac{1}{2}\dfrac{d}{dt}\Big( \|u\|_{H^s}^2 &+  M\|Q\|_{L^2}^2 +  \|\nabla Q\|_{H^s}^2 \Big)\\
		&+ \left(\dfrac{\mu}{2}\|\nabla u\|_{H^s}^2 + \dfrac{aM\Gamma}{2} \| Q\|_{H^s}^2+ (a+M)\Gamma\|\nabla Q\|_{H^s}^2 + \Gamma\|\Delta
		Q\|_{H^s}^2 \right) \\
		\leq &  C\sum_{m=1}^3 \mathcal{N}^\frac{m}{2}(t)\mathcal{M}(t),
	\end{split}
\end{equation}
which yields a similar result as expected for the case $k=0$.

Now, we turn to proving \eqref{eq5.2}.
It is obvious that if we multiply \eqref{eq5.11}$_{s-1}$ by a sufficiently large positive constant $\widetilde{C}$ and then add \eqref{eq5.11}$_{s}$, one has
\begin{equation}\label{eq5.19}
	\begin{split}
		\dfrac{1}{2}\dfrac{d}{dt}\Big[ &\sum_{k=s-1}^s(1+t)^k\Big( \|\partial^k u\|_{H^{s-k}}^2 +  M\|\partial^k Q\|_{H^{s-k}}^2 +  \|\partial^k\nabla Q\|_{H^{s-k}}^2 \Big)\Big]\\
		&+\sum_{k=s-1}^s\left[(1+t)^k\left(\dfrac{\mu}{2}\|\partial^{k}\nabla u\|_{H^{s-k}}^2 + \dfrac{aM\Gamma}{2} \|\partial^{k} Q\|_{H^{s-k}}^2+ (a+M)\Gamma\|\partial^{k}\nabla Q\|_{H^{s-k}}^2 + \Gamma\|\partial^k\Delta
		Q\|_{H^{s-k}}^2 \right)\right]\\[2.5ex]
		\leq & \widetilde{C}\frac{s-1}{2}(1+t)^{s-2}\Big( \|\partial^{s-1}u\|_{H^1}^2 +  M\|\partial^{s-1}Q\|_{H^1}^2 +  \|\partial^{s-1}\nabla Q\|_{H^1}^2 \Big) + C\sum_{m=1}^3 \mathcal{N}^\frac{m}{2}(t)\mathcal{M}(t).
	\end{split}
\end{equation}
Here the remainder $\Xi_{s}$ on the right-hand side of \eqref{eq5.11}$_{s}$ has been absorbed by \eqref{eq5.11}$_{s-1}$.
Thus, by repeating this process for \eqref{eq5.11}$_k$ ($1\leq k\leq s-2$) and \eqref{eq5.18}, we eventually obtain that
\[
	\begin{split}
		\dfrac{1}{2}\dfrac{d}{dt}&\Big[ \sum_{k=0}^s(1+t)^k\Big( \|\partial^k u\|_{H^{s-k}}^2 +  M\|\partial^k Q\|_{H^{s-k}}^2 +  \|\partial^k\nabla Q\|_{H^{s-k}}^2 \Big)\Big]\\
		&+\sum_{k=0}^s\left[(1+t)^k\left(\dfrac{\mu}{2}\|\partial^{k}\nabla u\|_{H^{s-k}}^2 + \dfrac{aM\Gamma}{2} \|\partial^{k} Q\|_{H^{s-k}}^2+ (a+M)\Gamma\|\partial^{k}\nabla Q\|_{H^{s-k}}^2 + \Gamma\|\partial^k\Delta
		Q\|_{H^{s-k}}^2 \right)\right]\\
		\leq &  C\sum_{m=1}^3 \mathcal{N}^\frac{m}{2}(t)\mathcal{M}(t).
	\end{split}
\]
This completes the proof of Proposition~\ref{prop5.1}.
\end{proof}
\begin{remark}\label{rmk5.1}
	 Let us mention that
	 the optimal time-decay estimates established in Theorem~\ref{Thm1.3} require that $\|u\|_{L^\infty}$, $\|Q\|_{L^\infty}$, $\|\nabla Q\|_{L^\infty}$ also enjoy the optimal time-decay properties, and then one can close the global a priori estimates
	 (see Lemma~\ref{lem5.1} and Proposition~\ref{prop5.2}).
	 To this end, we need to take $s\geq 4$ in \eqref{eq511}. 
	 However, the aforementioned estimates can still be achieved for some slightly lower regularity, and we leave it to the interested readers.
\end{remark}

\subsection{$L^2$ decay estimates}
In this subsection, for any $s\geq 4$, we define
\begin{equation}\label{eq518}
	\mathcal{H}(t):=\sum_{k=0}^{s-2} (1+t)^{\frac{3}{4}+\frac{k}{2}}  \|\partial^k u(t)\|_{L^2}  + \sum_{k=0}^{s-1} (1+t)^{\frac{3}{4}+\frac{k}{2}} e^{\frac{a\Gamma t}{2}} \|\partial^k Q(t)\|_{L^2}.
\end{equation}
Obviously, the boundedness of $\mathcal{H}(t)$ yields the desired $L^2$ decay properties of the global solution stated in Theorem~\ref{Thm1.3}. Hence, the key ingredient in our analysis is to control the time-weighted quantity $\mathcal{H}(t)$. 
In addition,
it is obvious that one can combine the Gagliardo-Nirenberg inequality and \eqref{eq518}
to get the time-decay estimate of the global solution in the $L^\infty$ norm, i.e.,
\begin{equation}\label{eq511}
	\begin{split}
		\|\partial^k u(t)\|_{L^\infty}\leq& \|\partial^{k+2} u(t)\|_{L^2}^{\frac{3}{4}}\|\partial^k u(t)\|_{L^2}^{\frac{1}{4}}\leq \mathcal{H}(t)(1+t)^{-\frac{3}{2}-\frac{k}{2}},\hspace{1.53cm} \forall~0\leq k\leq s-4,\\
		\|\partial^k Q(t)\|_{L^\infty}\leq &\|\partial^{k+2} Q(t)\|_{L^2}^{\frac{3}{4}}\|\partial^k Q(t)\|_{L^2}^{\frac{1}{4}}\leq \mathcal{H}(t)(1+t)^{-\frac{3}{2}-\frac{k}{2}}e^{-\frac{a\Gamma t}{2}},\quad \forall~0\leq k\leq s-3.
	\end{split}
\end{equation}

Before we prove the main result of this subsection, we present a lemma which will be needed in the proof of Proposition~\ref{prop5.2}. In other words, we have the following time-decay estimates for the source terms defined in \eqref{eq4.2}.
\begin{lemma}\label{lem5.1}
Let $s\geq 4$ be an integer and $a>0$, $E_0<+\infty$. For all $0\leq k\leq s-1$, we have
\begin{align}\label{eq5.12}
	\|\partial^k f_1\|_{L^2\cap L^1}\leq& C  \mathcal{N}^{\frac{1}{2}}(t)\mathcal{H}(t) (1+t)^{-\frac{3}{4}-\frac{k}{2}}e^{-\frac{a\Gamma t}{2}},\\
	\|\partial^k f_2\|_{L^2\cap L^1}\leq& C  \sum_{m=1}^{2} \mathcal{N}^{\frac{m}{2}}(t)\mathcal{H}(t) (1+t)^{-\frac{3}{4}-\frac{k}{2}}e^{-\frac{a\Gamma t}{2}},\\
	\|\partial^k f_3\|_{L^2\cap L^1} \leq& C  \sum_{m=1}^{3} \mathcal{N}^{\frac{m}{2}}(t)\mathcal{H}(t) (1+t)^{-\frac{3}{4}-\frac{k}{2}},
\end{align}
where $C$ is a positive constant depending on $s$, $a$, $b$, $c$, $\mu$, $\kappa$, $\lambda$ and $\Gamma$. Moreover, it holds that
\begin{equation}\label{eq5.17}
	\|\partial^k f_1\|_{L^2\cap L^1}\lesssim \mathcal{N}^{\frac{1}{2}}(t)\mathcal{H}(t) (1+t)^{-\frac{3}{4}-\frac{k}{2}},\quad \forall~0\leq k\leq s.
\end{equation}
\end{lemma}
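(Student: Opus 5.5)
The plan is to expand each $\partial^k f_i$ by the Leibniz rule and to bound every resulting product. The $L^1$ part always uses Hölder ($\|gh\|_{L^1}\le\|g\|_{L^2}\|h\|_{L^2}$). The $L^2$ part uses $\|gh\|_{L^2}\le\|g\|_{L^\infty}\|h\|_{L^2}$, or $L^3\times L^6$ when neither factor is controlled in $L^\infty$. In each product we take one factor from $\mathcal{H}(t)$ (the decaying one) and the other factor(s) from $\mathcal{N}(t)$, which we use only as a bounded quantity: $\|(Q,u)\|_{H^s}+\|\nabla Q\|_{H^s}\lesssim\mathcal{N}^{1/2}$. For the $L^1$ norms the decay must come from $\mathcal{H}$ only. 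This works because each $f_i$ is at least quadratic and contains a $Q$ (or $u$) factor of low order, which $\mathcal{H}$ controls in $L^2$ with rate $(1+t)^{-3/4-j/2}$.

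For $f_1=u\otimes Q$ we write $\partial^k f_1=\sum_{j\le k}\binom{k}{j}\partial^{j}u\,\partial^{k-j}Q$. For the exponential factor we always place the $Q$-factor in the $\mathcal{H}$ slot. If $k-j\le s-1$, we bound $\|\partial^{k-j}Q\|_{L^2}\le\mathcal{H}(1+t)^{-3/4-(k-j)/2}e^{-a\Gamma t/2}$ and $\|\partial^j u\|_{L^\infty}$ or $\|\partial^ju\|_{L^2}$ by $\mathcal{N}^{1/2}$. This loses the factor $(1+t)^{-j/2}$.

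To recover that loss I will use the weighted part of $\mathcal{N}$: $(1+t)^{j/2}\|\partial^j u\|_{H^{s-j}}\le\mathcal{N}^{1/2}$, so $\|\partial^ju\|_{L^\infty}\lesssim\mathcal{N}^{1/2}(1+t)^{-j/2}$ whenever $j+2\le s$. In the remaining top cases ($j=k\ge s-2$) the $u$-factor does not fit this bound, and I swap roles. I take $\|\partial^k u\|_{L^2}\le\mathcal{N}^{1/2}(1+t)^{-k/2}$ and $\|Q\|_{L^\infty}\lesssim\mathcal{H}(1+t)^{-3/2}e^{-a\Gamma t/2}$ from \eqref{eq511}, or $\|Q\|_{L^2}$ for the $L^1$ norm. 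This is where the exponential factor survives.

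For \eqref{eq5.17} with $k=s$, the term $\partial^s Q$ is not in $\mathcal{H}$. I bound it by $(1+t)^{s/2}\|\partial^sQ\|_{L^2}\le\mathcal{N}^{1/2}$ and put $u$ in $\mathcal{H}$ via $\|u\|_{L^\infty}$ or $\|u\|_{L^2}$. The exponential factor is lost, which is consistent with the statement.

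For $f_2$ the terms are $\Omega Q$, $|Q|D$, $Q^2$ and $Q|Q|^2$. The quadratic ones are treated exactly as $f_1$, with $\nabla u$ in place of $u$: $(1+t)^{(j+1)/2}\|\partial^{j+1}u\|\le\mathcal{N}^{1/2}$ gives even more decay. The $|Q|$ derivatives are handled by Lemma~\ref{lem-A6}, i.e.\ $\|\partial^j|Q|\|_{L^2}\lesssim\|\partial^jQ\|_{L^2}$, and Moser-type product estimates. The cubic term gives the $\mathcal{N}^{2/2}$ contribution, since there two factors are bounded by $\mathcal{N}$.

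For $f_3$ the terms are $u\otimes u$, $\nabla Q\odot\nabla Q$, $Q\Delta Q-\Delta QQ$ and $\lambda|Q|H[Q]$. Here no exponential factor is required, so every $Q$-factor may be bounded by its weighted $\mathcal{N}$ norm, and the decaying factor can be taken either from $u$ or from $Q$ through $\mathcal{H}$.

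The main obstacle is derivative counting at top order, $k=s-1$, in terms such as $\partial^{s-1}(Q\Delta Q)$ and $\partial^{s-1}(|Q|\Delta Q)$. These contain $\partial^{s+1}Q$, which lies outside $\mathcal{H}$ and can be controlled only through $\mathcal{N}$, with decay $(1+t)^{-(s+1)/2}\cdot$ bounded, since $(1+t)^{s}\|\partial^s\nabla Q\|^2\le\mathcal{N}$. I will pair it with $\|Q\|_{L^\infty}\lesssim\mathcal{H}(1+t)^{-3/2}$ in $L^2$ and with $\|Q\|_{L^2}$ in $L^1$. This yields $(1+t)^{-3/4-(s+1)/2}$, better than needed.

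Intermediate terms $\partial^jQ\,\partial^{s+1-j}Q$ will be split so that the lower-order factor sits in $L^\infty$ (via \eqref{eq511}, valid up to order $s-3$) and the higher-order factor sits in $L^2$ with its $\mathcal{N}$ weight. When both orders exceed $s-3$ (only possible for small $s$), I use $L^3\times L^6$ and Gagliardo–Nirenberg, as in \eqref{eq5.8}, to interpolate the decay rates. I expect this bookkeeping, together with checking that $s\ge4$ suffices in each case, to be the delicate part.
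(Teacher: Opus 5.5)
Your argument is correct and uses the same placement principle as the paper's proof. In each product one factor is estimated through $\mathcal{H}(t)$; for $f_1,f_2$ this is always a $Q$-factor, so $e^{-a\Gamma t/2}$ survives. The other factors are estimated through the time-weighted parts of $\mathcal{N}(t)$, for instance $\|\partial^k u\|_{L^2}\le(1+t)^{-k/2}\mathcal{N}^{1/2}$, $\|\nabla u\|_{L^\infty}\lesssim(1+t)^{-1/2}\mathcal{N}^{1/2}$ and $\|\Delta Q\|_{L^\infty}\lesssim(1+t)^{-1}\mathcal{N}^{1/2}$.

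Where you differ is the product rule. The paper never expands by Leibniz. It applies the Moser-type inequalities of Lemma~\ref{lem-a1}, which bound $\|\partial^k(fg)\|_{L^2}$ and $\|\partial^k(fg)\|_{L^1}$ by the two endpoint products alone. It therefore needs only \eqref{eq5.13}, plus weighted-$\mathcal{N}$ bounds for $\nabla u$, $\Delta Q$ and for the factor carrying all $k$ derivatives. The intermediate terms, role swaps and small-$s$ cases you single out as delicate never arise. Your Leibniz--H\"older route is more elementary: the interpolation hidden inside Lemma~\ref{lem-a1} is replaced by the explicit weights in $\mathcal{N}$, and it shows where $s\ge 4$ is used. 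The cost is a case analysis that has to be complete.

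On that case analysis, a few points.

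\emph{Missing swaps.} The swapped terms are not only those with $j=k$. Products such as $\partial^{s-1}u\,\nabla Q$ also need the swap. They occur in $\partial^{s-1}(\Omega Q)$, in $\partial^{s-1}(|Q|D)$ (with $\nabla|Q|$, using $|\nabla|Q||\le|\nabla Q|$), and in $\partial^s f_1$ for \eqref{eq5.17}. There you need $\|\nabla Q\|_{L^\infty}\lesssim\mathcal{H}(1+t)^{-2}e^{-a\Gamma t/2}$ from \eqref{eq511}, and this is one of the places where $s\ge 4$ is required.

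\emph{Exponent slip.} The bound $(1+t)^{s}\|\partial^s\nabla Q\|_{L^2}^2\le\mathcal{N}$ gives $\|\partial^{s+1}Q\|_{L^2}\le(1+t)^{-s/2}\mathcal{N}^{1/2}$, not $(1+t)^{-(s+1)/2}$. This is harmless, since at $k=s-1$ only $(1+t)^{-1/4-s/2}$ is required.

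\emph{The $L^3\times L^6$ fallback is unnecessary.} In every product one of two options works. Either a factor of order at most $s-2$ goes in $L^\infty$ through the weighted $\mathcal{N}$ while the other is controlled in $L^2$ by $\mathcal{H}$. Or a factor of order at most $1$ goes in $L^\infty$ through \eqref{eq511} while the other is controlled by $\mathcal{N}$.

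\emph{Lemma~\ref{lem-A6}.} You use it in the homogeneous form $\|\partial^j|Q|\|_{L^2}\lesssim\|\partial^jQ\|_{L^2}$, exactly as the paper does in \eqref{eq5.20}. This dependence is inherited from the paper, not a defect of your route. Be aware, though, that for $j\ge 2$ no such bound holds for general $Q$. If $Q=fQ_\ast$ with $Q_\ast$ constant and $f$ changing sign across a surface, then $\nabla|Q|$ jumps there and $|Q|\notin H^2$. So that step is only as reliable as Lemma~\ref{lem-A6}.
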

\begin{proof}
From
\eqref{eq518} and \eqref{eq511}, it immediately follows that
\begin{equation}\label{eq5.13}
	\begin{split}
		\|u\|_{L^\infty\cap L^2}\lesssim& (1+t)^{-\frac{3}{4}} \mathcal{H}(t),\\
		\|Q\|_{L^\infty\cap L^2}\lesssim& (1+t)^{-\frac{3}{4}}e^{-\frac{a\Gamma t}{2}} \mathcal{H}(t),\\
		\|\nabla Q\|_{L^\infty\cap L^2}\lesssim& (1+t)^{-\frac{5}{4}}e^{-\frac{a\Gamma t}{2}} \mathcal{H}(t).
	\end{split}
\end{equation}
Then, together with the Moser-type calculus inequalities and \eqref{eq5.5}, we are able to obtain
\begin{align*}
	\|\partial^k f_1\|_{L^2\cap L^1}\leq&C\|u\|_{L^\infty\cap L^2}\|\partial^k Q\|_{L^2} + C\| Q\|_{L^\infty\cap L^2}\|\partial^ku\|_{L^2}\\
	\leq& C\mathcal{N}^{\frac{1}{2}}(t)\cdot (1+t)^{-\frac{3}{4}-\frac{k}{2}}e^{-\frac{a\Gamma t}{2}}  \mathcal{H}(t) +  C(1+t)^{-\frac{3}{4}}e^{-\frac{a\Gamma t}{2}}  \mathcal{H}(t) \cdot (1+t)^{-\frac{k}{2}}\mathcal{N}^{\frac{1}{2}}(t) \\
	\leq & C  \mathcal{N}^{\frac{1}{2}}(t)\mathcal{H}(t)(1+t)^{-\frac{3}{4}-\frac{k}{2}}e^{-\frac{a\Gamma t}{2}},
\end{align*}
for all $0\leq k\leq s-1$. Moreover, one can get \eqref{eq5.17} by making some modifications, i.e.,
\[
\|\partial^k f_1\|_{L^2\cap L^1}\lesssim \left[(1+t)^{-\frac{3}{4}} \mathcal{H}(t) +  (1+t)^{-\frac{3}{4}}e^{-\frac{a\Gamma t}{2}}\mathcal{H}(t) \right] (1+t)^{-\frac{k}{2}}\mathcal{N}^{\frac{1}{2}}(t) 
\lesssim  \mathcal{N}^{\frac{1}{2}}(t)\mathcal{H}(t)(1+t)^{-\frac{3}{4}-\frac{k}{2}}.
\]
Next, using \eqref{eq5.6}-\eqref{eq5.8} and a similar argument as above, we have
\begin{align}\label{eq5.20}
	\|\partial^k f_2\|_{L^2\cap L^1}\sim & \| Q\|_{L^\infty\cap L^2}\|\partial^k\nabla u\|_{L^2} + \|\nabla u\|_{L^\infty\cap L^2}\left(\|\partial^k Q\|_{L^2} + \|\partial^k |Q|\|_{L^2}\right) \nonumber\\
	& + \| Q\|_{L^\infty\cap L^2}\|\partial^kQ\|_{L^2} +  \| Q\|_{L^\infty}\| Q\|_{L^\infty\cap L^2}\|\partial^kQ\|_{L^2}\nonumber\\
	\lesssim&  (1+t)^{-\frac{3}{4}} e^{-\frac{a\Gamma t}{2}} \mathcal{H}(t) \cdot (1+t)^{-\frac{k+1}{2}}\mathcal{N}^{\frac{1}{2}}(t)  +  (1+t)^{-\frac{1}{2}}\mathcal{N}^{\frac{1}{2}}(t) \cdot  (1+t)^{-\frac{3}{4}-\frac{k}{2}}e^{-\frac{a\Gamma t}{2}}  \mathcal{H}(t) \nonumber\\
	& +  \mathcal{N}^{\frac{1}{2}}(t)\cdot (1+t)^{-\frac{3}{4}-\frac{k}{2}}e^{-\frac{a\Gamma t}{2}}  \mathcal{H}(t) + \mathcal{N}(t)\cdot (1+t)^{-\frac{3}{4}-\frac{k}{2}}e^{-\frac{a\Gamma t}{2}}  \mathcal{H}(t) \\
	\leq & C \sum_{m=1}^{2} \mathcal{N}^{\frac{m}{2}}(t)\mathcal{H}(t)(1+t)^{-\frac{3}{4}-\frac{k}{2}}e^{-\frac{a\Gamma t}{2}},\nonumber
\end{align}
and
\begin{align*}
		\|\partial^k f_3\|_{L^2\cap L^1}\sim &\|u\|_{L^\infty\cap L^2}\|\partial^k u\|_{L^2} + \|\nabla Q\|_{L^\infty\cap L^2}\|\partial^k\nabla Q\|_{L^2} + \|Q\|_{L^\infty\cap L^2}\|\partial^k \Delta Q\|_{L^2}\\
		&+ \Big(\| \Delta Q\|_{L^\infty\cap L^2} + \| Q\|_{L^\infty\cap L^2} \sum_{m=0}^2\| Q\|_{L^\infty}^m \Big)\left(\|\partial^kQ\|_{L^2}+ \|\partial^k|Q|\|_{L^2}\right)\\
		\lesssim&  (1+t)^{-\frac{3}{4}} \mathcal{H}(t) \cdot (1+t)^{-\frac{k}{2}}\mathcal{N}^{\frac{1}{2}}(t) +  (1+t)^{-\frac{5}{4}}e^{-\frac{a\Gamma t}{2}} \mathcal{H}(t) \cdot (1+t)^{-\frac{k+1}{2}}\mathcal{N}^{\frac{1}{2}}(t)\\
		& +  (1+t)^{-\frac{3}{4}}e^{-\frac{a\Gamma t}{2}}\mathcal{H}(t)\cdot (1+t)^{-\frac{k+1}{2}}\mathcal{N}^{\frac{1}{2}}(t) +  (1+t)^{-1} \mathcal{N}^{\frac{1}{2}}(t) \cdot (1+t)^{-\frac{3}{4}-\frac{k}{2}}e^{-\frac{a\Gamma t}{2}}  \mathcal{H}(t)  \\
		& +  (1+t)^{-\frac{3}{4}} e^{-\frac{a\Gamma t}{2}}\mathcal{H}(t) \cdot (1+t)^{ -\frac{k}{2}}\mathcal{N}^{\frac{1}{2}}(t)\sum_{m=0}^2 \mathcal{N}^{\frac{m}{2}}(t)\\
		\leq & C \sum_{m=1}^{3} \mathcal{N}^{\frac{m}{2}}(t)\mathcal{H}(t) (1+t)^{-\frac{3}{4}-\frac{k}{2}}.
\end{align*}
This completes the proof.
\end{proof}
Now, we are ready to establish the $L^2$ decay estimates of the global solution to \eqref{eq1.1} by using Duhamel formulation.
To begin with, we define the following new quantities:
\begin{equation}
	\mathcal{W}(t):=\sup_{0\leq \tau\leq t} \sqrt{\mathcal{N}(\tau)}\quad \text{and}\quad \mathcal{V}(t):=\sup_{0\leq \tau\leq t} \mathcal{H}(\tau).
\end{equation}
Then, we have the following result.
\begin{prop}\label{prop5.2}
Let $s\geq 4$ be an integer and $a>0$, $\widebar{E}_0<+\infty$. It then holds that
\begin{equation}\label{eq5.33}
	\mathcal{V}(t)\leq 	C  \left( \widebar{E}_0 + \sum_{m=1}^{3} \mathcal{W}^m(t)\mathcal{V}(t) \right),
\end{equation}
where $C$ is a positive constant depending on $s$, $a$, $b$, $c$, $\mu$, $\kappa$, $\lambda$ and $\Gamma$.
\end{prop}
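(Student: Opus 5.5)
We will estimate each summand of $\mathcal{H}(t)$ directly from the Duhamel formulas \eqref{eq5.1}--\eqref{eq-5.2}. The linear parts $Q_L,u_L$ are handled by Theorem~\ref{thm4.1}, which gives $(1+t)^{\frac34+\frac k2}e^{\frac{a\Gamma t}{2}}\|\partial^kQ_L\|_{L^2}+(1+t)^{\frac34+\frac k2}\|\partial^ku_L\|_{L^2}\lesssim \widebar{E}_0$. For the nonlinear integrals, we split each time integral into $\int_0^{t/2}$ and $\int_{t/2}^t$. On $[0,t/2]$ all derivatives fall on the kernel, and we use Lemma~\ref{lem4.2} with the $L^1\cap L^2$ norms of $f_i$ (or of $\partial f_i$). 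On $[t/2,t]$ the derivatives fall on the source, and we use Lemma~\ref{lem5.1}, with $\mathcal{N}\le\mathcal{W}^2$ and $\mathcal{H}\le\mathcal{V}$. In every step the aim is to produce a factor $\sum_m\mathcal{W}^m\mathcal{V}$ times the target rate.

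\emph{The $Q$-part.} For $f_2$ we bound, for $k\le s-1$,
\[
\int_0^t e^{-a\Gamma(t-\tau)}(1+t-\tau)^{-\frac34-\frac{k_1}{2}}\|\partial^{k-k_1}f_2\|_{L^1\cap L^2}\,d\tau,
\]
with $k_1=k$ on $[0,t/2]$ and $k_1=0$ on $[t/2,t]$. Lemma~\ref{lem5.1} supplies the factor $e^{-\frac{a\Gamma\tau}{2}}$. Combined with $e^{-a\Gamma(t-\tau)}$, this gives at most $e^{-\frac{a\Gamma t}{2}}$ times a convolution of algebraic rates. On $[0,t/2]$ we keep the unused factor $e^{-\frac{a\Gamma(t-\tau)}{2}}\le e^{-\frac{a\Gamma t}{4}}$, which absorbs any missing algebraic power. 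This is the ``exchange half of the exponential'' trick. To make the final constant clean, we may weaken the target weight to $e^{\frac{a\Gamma t}{4}}$, consistent with the rate $\frac{(c-c_\star)\Gamma}{4}=\frac{a\Gamma}{2}$ in Theorem~\ref{Thm1.3}. For ${\rm div}f_1$ we use the gradient kernel estimate \eqref{eq417}. On $[t/2,t]$ we take $\partial^k f_1$ with $k\le s-1$; the $t^{-1/2}$ singularity is integrable, and the $L^1$ part yields $(1+t-\tau)^{-5/4}$. This is exactly what avoids needing $\partial^s Q$ with its non-optimal rate. On $[0,t/2]$ we again put everything on the kernel.

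\emph{The $u$-part.} For $u$ with $k\le s-2$, the kernel $\check{\mathcal{B}}$ composed with $\mathbb{P}{\rm div}$ applied to $(f_2-{\rm div}f_1)$ costs up to two derivatives. On $[t/2,t]$ we place them on the sources, using $\partial^{k+1}f_2$ and $\partial^{k+2}f_1$ with $k+2\le s$. The exponentially decaying rates from Lemma~\ref{lem5.1} (and \eqref{eq5.17} for $f_1$ up to order $s$) are more than enough. On $[0,t/2]$ the derivatives go onto $\check{\mathcal{B}}$, and Lemma~\ref{lem4.2}(ii) gives $(1+t)^{-\frac34-\frac{k+2}{2}}$ times $\int_0^{t/2}(\cdots)e^{-\frac{a\Gamma\tau}{2}}d\tau$, which is bounded. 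For the $\check{\mathcal{C}}$ term with $\mathbb{P}{\rm div}f_3$, we use Lemma~\ref{lem4.2}(iii) and the $f_3$ bound from Lemma~\ref{lem5.1}. The integral $\int_0^t(1+t-\tau)^{-\frac34-\frac{k_1}{2}}(1+\tau)^{-\frac34-\frac{k-k_1}{2}}d\tau$ has no exponential help and must be done carefully.

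\emph{Main obstacle.} We expect the main obstacle to be the $f_3$ contribution to $u$. Since $f_3$ only decays like $(1+\tau)^{-\frac34-\frac{k}{2}}$, the convolution $\int_0^{t/2}(1+t-\tau)^{-\frac54-\frac k2}(1+\tau)^{-\frac34}d\tau$ yields only $(1+t)^{-1-\frac k2}$, which suffices. However, on $[t/2,t]$ with $k_1=0$ we need $\partial^{k+1}f_3$, i.e. $k+1\le s-1$, and the integral $\int_{t/2}^t(1+t-\tau)^{-\frac34}(1+\tau)^{-\frac34-\frac{k+1}{2}}d\tau\sim (1+t)^{-\frac14-\frac k2}$ is too weak. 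We would instead split one derivative as $\nabla\check{\mathcal{C}}$, analogously to \eqref{eq417}, giving a $(t-\tau)^{-1/2}$ factor with the $L^2$ norm of $\partial^k f_3$ plus $(1+t-\tau)^{-5/4}$ with the $L^1$ norm. This integrates to $(1+t)^{-\frac34-\frac k2}$. The quadratic term $u\otimes u$ in $f_3$ is the critical one, and we will check it carefully. Collecting all pieces and taking the supremum over $[0,t]$ gives \eqref{eq5.33}.
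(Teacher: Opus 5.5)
Your overall scheme is the paper's. You use the Duhamel formulas \eqref{eq5.1}--\eqref{eq-5.2}, split at $t/2$, apply Lemma~\ref{lem4.2} to the kernels and Lemma~\ref{lem5.1} to the sources, and trade half of $e^{-a\Gamma(t-\tau)}$ for algebraic decay. You also use \eqref{eq417} for ${\rm div}f_1$ on $[t/2,t]$, so only $\partial^k f_1$ with $k\le s-1$ enters. The trouble is the step you single out as the main obstacle, which rests on an arithmetic slip. Apply Lemma~\ref{lem4.2}(iii) with no derivative on $\check{\mathcal C}$, and Lemma~\ref{lem5.1} to $\partial^{k+1}f_3$ with $k+1\le s-1$. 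This gives
\[
\int_{t/2}^t (1+t-\tau)^{-\frac34}(1+\tau)^{-\frac54-\frac k2}\,d\tau\lesssim (1+t)^{-\frac54-\frac k2}\int_{t/2}^t(1+t-\tau)^{-\frac34}\,d\tau\lesssim (1+t)^{-1-\frac k2},
\]
or equivalently Lemma~\ref{lem-a2}(ii) with $\beta=\frac34$, $\gamma=\frac54+\frac k2$. This beats the target $(1+t)^{-\frac34-\frac k2}$; it is not $(1+t)^{-\frac14-\frac k2}$. This direct bound is exactly what the paper uses. The term $u\otimes u$ needs no separate check, since it is already covered by the $f_3$ estimate in Lemma~\ref{lem5.1}.

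The remedy you propose instead would not work as stated. \eqref{eq417} tolerates the $(t-\tau)^{-1/2}$ singularity only because it is multiplied by $e^{-a\Gamma(t-\tau)}$. If you mimic its derivation for $\check{\mathcal C}$, bounding $|\xi|^2|\mathcal C|^2\lesssim (t-\tau)^{-1}$ at high frequencies, there is no such factor. Then $\int_{t/2}^t(t-\tau)^{-1/2}(1+\tau)^{-\frac34-\frac k2}\,d\tau\sim t^{\frac12}(1+t)^{-\frac34-\frac k2}$, which is precisely the loss you were trying to avoid. To rescue it you would need the high-frequency factor $e^{-\mu R^2(t-\tau)}$ of the heat kernel, as in the proof of Lemma~\ref{lem4.2}(iii). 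Given the direct bound, that detour is pointless.

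Also drop the idea of weakening the weight to $e^{\frac{a\Gamma t}{4}}$. The statement concerns $\mathcal V$ built from \eqref{eq518} with $e^{\frac{a\Gamma t}{2}}$, which is also the weight Lemma~\ref{lem5.1} is expressed in. The weight $e^{\frac{a\Gamma t}{4}}$ would give the rate $\frac{(c-c_\star)\Gamma}{8}$, not that of Theorem~\ref{Thm1.3}. It is not needed anyway: $e^{-a\Gamma(t-\tau)}e^{-\frac{a\Gamma\tau}{2}}=e^{-\frac{a\Gamma t}{2}}e^{-\frac{a\Gamma(t-\tau)}{2}}$, and the second factor absorbs any algebraic deficit while $e^{-\frac{a\Gamma t}{2}}$ is kept. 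For the top-order ${\rm div}f_1$ term on $[0,t/2]$, \eqref{eq5.17} gives no exponential in $\tau$, but the bound $e^{-a\Gamma(t-\tau)}\le e^{-\frac{a\Gamma t}{2}}$ is exactly enough.
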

\begin{proof}
Let $0\leq |k| \leq s-1$.
Using the integral representation \eqref{eq5.1},
it follows from Lemma~\ref{lem4.2}, Lemma~\ref{lem5.1}, Lemma~\ref{lem-a2} and Theorem~\ref{thm4.1} that
\begin{align}\label{eq5.34}
		\|\partial^k Q(t)\|_{L^2}\lesssim & \|\partial^kQ_L(t)\|_{L^2} + \left\| \int_0^{\frac{t}{2}}\partial^k \check{\mathcal{A}}(t-s) \ast f_2(s) ds\right\|_{L^2} + \left\| \int_0^{\frac{t}{2}}\partial^{k+1} \check{\mathcal{A}}(t-s) \ast f_1(s) ds\right\|_{L^2} \nonumber\\
		& + \left\|\int_{\frac{t}{2}}^t \check{\mathcal{A}}(t-s) \ast \partial^kf_2(s) ds \right\|_{L^2} + \left\|\int_{\frac{t}{2}}^t \nabla\check{\mathcal{A}}(t-s) \ast \partial^kf_1(s) ds \right\|_{L^2}\nonumber\\
		\lesssim  & \widebar{E}_0 (1+t)^{-\frac{3}{4}-\frac{k}{2}}e^{-a\Gamma t} + \int_0^{\frac{t}{2}}  e^{-a\Gamma(t-s)} (1+t-s)^{-\frac{3}{4}-\frac{k}{2}} \| f_2(s) \|_{H^k \cap L^1} ds\nonumber\\
		& + \int_0^{\frac{t}{2}} e^{-a\Gamma(t-s)} (1+t-s)^{-\frac{5}{4}-\frac{k}{2}}  \| f_1(s) \|_{H^{k+1} \cap L^1}  ds\nonumber\\
		& +\int_{\frac{t}{2}}^t e^{-a\Gamma(t-s)}(1+t-s)^{-\frac{3}{4}} \| \partial^k f_2(s) \|_{L^2 \cap L^1}  ds\nonumber\\
		&+ \int_{\frac{t}{2}}^t e^{-a\Gamma(t-s)}\left( (1+t-s)^{-\frac{5}{4}}\| \partial^{k} f_1(s) \|_{L^1} + \frac{1}{\sqrt{t-s}}\| \partial^{k} f_1(s) \|_{L^2} \right) ds \\
		\lesssim  & \widebar{E}_0 (1+t)^{-\frac{3}{4}-\frac{k}{2}}e^{-\frac{a\Gamma t}{2} } + \sum_{m=1}^{2} \mathcal{W}^m(t)\mathcal{V}(t)\int_0^{\frac{t}{2}}  e^{-\frac{a\Gamma t}{2}} (1+t-s)^{-\frac{5}{4}-\frac{k}{2}} (1+s)^{-\frac{3}{4}}  ds\nonumber\\
		& + e^{-\frac{a\Gamma t}{2}} \mathcal{W}(t)\mathcal{V}(t)\int_0^{\frac{t}{2}}  (1+t-s)^{-\frac{5}{4}-\frac{k}{2}} (1+s)^{-\frac{3}{4}}  ds\nonumber\\
		& +\sum_{m=1}^{2} \mathcal{W}^m(t)\mathcal{V}(t)\int_{\frac{t}{2}}^t e^{-\frac{a\Gamma t}{2}} (1+t-s)^{-\frac{5}{4}} (1+s)^{-\frac{3}{4}-\frac{k}{2}}  ds\nonumber\\
		&+ e^{-\frac{a\Gamma t}{2}}\mathcal{W}(t)\mathcal{V}(t)\int_{\frac{t}{2}}^t e^{-\frac{a\Gamma(t-s)}{2}} \frac{1}{\sqrt{t-s}} (1+s)^{-\frac{3}{4}-\frac{k}{2}}   ds\nonumber\\
		\lesssim& \left(\widebar{E}_0 +\sum_{m=1}^{2} \mathcal{W}^m(t)\mathcal{V}(t) \right) (1+t)^{-\frac{3}{4}-\frac{k}{2}}  e^{-\frac{a\Gamma t}{2}}  + \mathcal{W}(t)\mathcal{V}(t)  (1+t)^{-\frac{3}{4}}e^{-\frac{a\Gamma t}{2}}\Gamma\left(\frac{1}{2}\right)\nonumber\\
		\leq &C (1+t)^{-\frac{3}{4}-\frac{k}{2}}e^{-\frac{a\Gamma t}{2}} \left( \widebar{E}_0+\sum_{m=1}^{2} \mathcal{W}^m(t)\mathcal{V}(t) \right).\nonumber
\end{align}
Again, via a similar argument, we apply $\partial^k$ $(0\leq |k|\leq s-2)$ to \eqref{eq-5.2} and get
\begin{align}
	\|\partial^k u(t)\|_{L^2}\lesssim & \|\partial^ku_L(t)\|_{L^2} + \left\| \int_0^{\frac{t}{2}}\partial^k \check{\mathcal{B}}(t-s)\ast \nabla^2 f_1(s)  ds\right\|_{L^2} + \left\| \int_0^{\frac{t}{2}}\partial^k \check{\mathcal{B}}(t-s)\ast \nabla f_2(s)  ds\right\|_{L^2} \nonumber\\
	&+ \left\| \int_0^{\frac{t}{2}}\partial^k   \check{\mathcal{C}}(t-s) \ast\nabla f_3(s) ds\right\|_{L^2} + \left\| \int_{\frac{t}{2}}^t \check{\mathcal{B}}(t-s)\ast \partial^k\nabla^2 f_1(s)  ds\right\|_{L^2} \nonumber\\
	& + \left\| \int_{\frac{t}{2}}^t \check{\mathcal{B}}(t-s)\ast \partial^k\nabla f_2(s)  ds\right\|_{L^2} + \left\| \int_{\frac{t}{2}}^t   \check{\mathcal{C}}(t-s) \ast \partial^k\nabla f_3(s) ds\right\|_{L^2}\nonumber\\
	\lesssim & \widebar{E}_0 (1+t)^{-\frac{3}{4}-\frac{k}{2}} + \int_0^{\frac{t}{2}} (1+t-s)^{-\frac{3}{4}-\frac{k}{2}} \| \nabla^2 f_1(s) \|_{H^{k} \cap L^1}  ds\nonumber\\
	&+   \int_0^{\frac{t}{2}} (1+t-s)^{-\frac{3}{4}-\frac{k}{2}} \|  \nabla f_2(s) \|_{H^{k} \cap L^1}  ds +   \int_0^{\frac{t}{2}} (1+t-s)^{-\frac{3}{4}-\frac{k}{2}} \|  \nabla f_3(s) \|_{H^{k} \cap L^1}  ds\nonumber\\
	& + \int_{\frac{t}{2}}^t (1+t-s)^{-\frac{3}{4}} \| \partial^k \nabla^2 f_1(s) \|_{L^2 \cap L^1}  ds + \int_{\frac{t}{2}}^t (1+t-s)^{-\frac{3}{4}} \| \partial^k \nabla f_2(s) \|_{L^2 \cap L^1}  ds \nonumber\\
	& + \int_{\frac{t}{2}}^t (1+t-s)^{-\frac{3}{4}} \| \partial^k \nabla f_3(s) \|_{L^2 \cap L^1}  ds \label{eq5.35}\\
	\lesssim & \widebar{E}_0 (1+t)^{-\frac{3}{4}-\frac{k}{2}} +  \mathcal{W}(t)\mathcal{V}(t) \int_0^{\frac{t}{2}} (1+t-s)^{-\frac{3}{4}-\frac{k}{2}} (1+s)^{-\frac{7}{4}} ds  \nonumber\\
	&+  \sum_{m=1}^{3} \mathcal{W}^m(t)\mathcal{V}(t)   \int_0^{\frac{t}{2}} (1+t-s)^{-\frac{3}{4}-\frac{k}{2}} (1+s)^{-\frac{5}{4}} ds\nonumber\\
	&+  \mathcal{W}(t)\mathcal{V}(t)   \int_{\frac{t}{2}}^t (1+t-s)^{-\frac{3}{4}} (1+s)^{-\frac{7}{4}-\frac{k}{2}} ds \nonumber\\
	&+ \sum_{m=1}^{3} \mathcal{W}^m(t)\mathcal{V}(t)  \int_{\frac{t}{2}}^t (1+t-s)^{-\frac{3}{4}} (1+s)^{-\frac{5}{4}-\frac{k}{2}} ds \nonumber\\
	\leq& C (1+t)^{-\frac{3}{4}-\frac{k}{2}} \left( \widebar{E}_0 + \sum_{m=1}^{3} \mathcal{W}^m(t)\mathcal{V}(t) \right).\nonumber
\end{align}
Note that we excluded the Leray projector $\mathbb{P}$ for simplicity, as it is
bounded in $L^2$.

Finally, by summing up \eqref{eq5.34} and \eqref{eq5.35} for all $k$, we get \eqref{eq5.33} and this completes the proof.
\end{proof}

\subsection{Proof of Theorem~\ref{Thm1.3}}
Now, we are in a position to present the proof of Theorem~\ref{Thm1.3}.
To achieve this, we make the a priori assumption that
\[
\mathcal{N}(t)\leq \delta^2 \ll 1,\quad \forall \,t\in [0,T],
\]
where time $T>0$ is fixed.
It then follows from \eqref{eq5.2} that
\[
	\mathcal{N}(t)\leq E_0,\quad \forall \,t\in [0,T].
\]
Thus, we can conclude that $\sup_{0\leq t\leq T}\mathcal{N}(t)\leq \delta^2$ for some sufficiently small $E_0$, and hence
\[
\mathcal{W}(T)\leq \delta.
\]
Together with \eqref{eq5.33}, we see that
\[
\mathcal{V}(T)\leq 	C \widebar{E}_0.
\]
Since $T$ is ‌arbitrary, the above results yield \eqref{eq:1.11}-\eqref{eq:1.12} hold for all $t>0$. By \eqref{eq511}, we immediately get \eqref{eq:1.13}.
Moreover, the $L^p$ decay rates \eqref{eq:1.14} follows from the following Gagliardo-Nirenberg inequality
\begin{equation}
	\|\partial_x^kf\|_{L^p} \leq C\|\partial_x^mf\|_{L^2}^{\theta
	}\|f\|_{L^2}^{1-\theta},\quad 0\leq k\leq m,\, 2\leq p\leq \infty,\, \theta=\frac{1}{m}\left(k+\dfrac{3}{2}- \dfrac{3}{p}\right)\leq 1.
\end{equation}
This completes the proof of Theorem~\ref{Thm1.3}.

\begin{remark}\label{rk5.2}
In this remark, we indicate how to modify the argument to achieve the optimal stable mixing estimate of the $Q$-tensor (in the sense of comparison with the associated linearized system) by assuming that $\|(Q,u)\|_{L^1(\R^3)}$ is sufficiently small.
	
	First, in order to characterize the desired decay properties of $Q$-tensor, we need to replace \eqref{eq518} by
	\begin{equation}
		\widetilde{\mathcal{H}}(t):=\sum_{k=0}^{s-2} (1+t)^{\frac{3}{4}+\frac{k}{2}}  \|\partial^k u(t)\|_{L^2}  + \sum_{k=0}^{s-1} (1+t)^{\frac{3}{4}+\frac{k}{2}} e^{a\Gamma t} \|\partial^k Q(t)\|_{L^2}.
	\end{equation}
	Note that it is easy to see that the following two terms in \eqref{eq5.20} can also be estimated by
	\begin{equation}
		\| Q\|_{L^\infty\cap L^2}\|\partial^kQ\|_{L^2} +  \| Q\|_{L^\infty}\| Q\|_{L^\infty\cap L^2}\|\partial^kQ\|_{L^2} \lesssim (1+t)^{-\frac{3}{4}-\frac{k}{2}}e^{-a\Gamma t}\left(\widetilde{\mathcal{H}}^2(t)+ \mathcal{N}^{\frac{1}{2}}(t)\widetilde{\mathcal{H}}^2(t)\right).
	\end{equation}
	Then, arguing as in the proof of Lemma~\ref{lem5.1}, the following improved energy estimates hold for all $k\leq s-1$:
	\begin{align}
		\|\partial^k f_1\|_{L^2\cap L^1}\leq& C  \mathcal{N}^{\frac{1}{2}}(t)\widetilde{\mathcal{H}}(t) (1+t)^{-\frac{3}{4}-\frac{k}{2}}e^{-a\Gamma t},\\
		\|\partial^k f_2\|_{L^2\cap L^1}\leq& C \left( \widetilde{\mathcal{H}}^2(t) + \sum_{m=1}^{2} \mathcal{N}^{\frac{1}{2}}(t)\widetilde{\mathcal{H}}^m(t)\right) (1+t)^{-\frac{5}{4}-\frac{k}{2}}e^{-a\Gamma t}.
	\end{align}
	Combining these results, one can simply repeat the argument presented above to conclude that
	\begin{equation}\label{eq-5.30}
		\widetilde{\mathcal{V}}(t)\lesssim   \widebar{E}_0 + \sum_{m=1}^{3} \mathcal{W}^m(t)\widetilde{\mathcal{V}}(t) +  \left(1+ \mathcal{W}(t)\right)\widetilde{\mathcal{V}}^2(t),\quad \text{\rm with}\,\, \widetilde{\mathcal{V}}(t):=\sup_{0\leq \tau\leq t} \widetilde{\mathcal{H}}(\tau).
	\end{equation}
	Now, with the smallness of $\widebar{E}_0$, we claim that
	\begin{equation}
		\|\partial^kQ(t) \|_{L^2}  \lesssim (1+t)^{-\frac{3}{4}-\frac{k}{2}}e^{-a\Gamma t}, \quad \text{\rm for all $k\leq s-1$}.
	\end{equation}
	We point out that the structure of estimate \eqref{eq-5.30} is similar to what we proved for the viscid 1D models of blood flow \cite{Yang-blood}, where the smallness of initial data in the $L^1$ norm is also required in the $H^s$-framework.
\end{remark}

\subsection{Sharp decay estimate for velocity}\label{sec5.4}
At the end, we give the proof of
Theorem~\ref{Thm1.4}.
%
As shown by Theorem~\ref{Thm1.3}, the upper bound for the decay rate of $\partial^k u(t)$ is
\[
\|\partial^ku(t) \|_{L^2(\R^3)}  \lesssim (1+t)^{-\frac{3}{4}-\frac{k}{2}},
\]
for any $t>0$ and $k\leq s-2$ with $s\geq 4$.
So we only need to show the lower bound for $\partial^k u(t)$ under the additional assumption \eqref{eq1.20}.
More precisely, we have the following result.
\begin{prop}\label{prop5.3}
	Let the assumptions of Theorem~\ref{Thm1.3} and \eqref{eq1.20} be satisfied.
	Then, the following inequality holds for all $0\leq k\leq s-2:$
	\begin{equation}\label{eq5.26}
		\left\|\partial^k u(t) \right\|_{L^2(\R^3)}
		\geq  C (1+t)^{-\frac{3}{4}-\frac{k}{2}},
	\end{equation}
	where $C$ is a positive constant depending on $\widebar{E}_0$, $\|u_0\|_{L^1}$, $\mu$, $a$, $\Gamma$, $\kappa$.
\end{prop}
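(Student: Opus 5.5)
The plan is to use the Duhamel representation \eqref{eq-5.2}, writing $u(t)=u_L(t)+u_N(t)$, with $u_L$ the linear part and $u_N$ the two time integrals. We will bound $\|\partial^k u_L(t)\|_{L^2}$ from below by $c_1(1+t)^{-\frac34-\frac k2}$ and $\|\partial^k u_N(t)\|_{L^2}$ from above by $C\widebar{E}_0\,\mathcal{W}(t)(1+t)^{-\frac34-\frac k2}$, or by something of faster decay. Since $\mathcal{W}\le\delta$ is small, the nonlinear part cannot cancel the linear lower bound. By the triangle inequality,
\[
\|\partial^k u\|_{L^2}\ge \|\partial^k u_L\|_{L^2}-\|\partial^k u_N\|_{L^2},
\]
and \eqref{eq5.26} follows.

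\textbf{Step 1: linear lower bound.} By Corollary~\ref{cor4.1}, $\hat u_L=\mathcal{B}\,\widehat{\sigma_0}+\mathcal{C}\,\widehat{u_0}$. By \eqref{eq1.20} and the continuity of $\widehat{u_0}$, there is $r>0$ such that $|\widehat{u_0}(\xi)|\ge \frac12|\int u_0|=:m_0>0$ for $|\xi|\le r$. Restricting Plancherel to $|\xi|\le r$ gives
\[
\|\partial^k(\check{\mathcal{C}}*u_0)\|_{L^2}^2\ge m_0^2\int_{|\xi|\le r}|\xi|^{2k}e^{-2\mu|\xi|^2t}\,d\xi\ge c(1+t)^{-\frac32-k},
\]
using the scaling $\xi\mapsto \xi/\sqrt t$ for $t\ge1$ and a trivial bound for $t\le1$. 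The cross term from $\mathcal{B}\widehat{\sigma_0}$ is of lower order. Near $\xi=0$, the formula \eqref{eqB-1}/\eqref{eqB-2} gives $|\mathcal{B}|\lesssim e^{-\mu|\xi|^2t}+e^{-a\Gamma t}$, and $|\widehat{\sigma_0}|\lesssim |\xi|\,\|Q_0\|_{L^1}$. Hence $\|\partial^k(\check{\mathcal{B}}*\sigma_0)\|_{L^2}\lesssim \widebar{E}_0(1+t)^{-\frac54-\frac k2}$ at low frequencies, and the high frequencies decay exponentially. For $t\ge T_0$ large this term is absorbed. For bounded $t$, the lower bound holds because $\|\partial^k u(t)\|_{L^2}>0$ by continuity and the nondegeneracy of $u_0$; concretely, we can handle $t\le T_0$ by the same low-frequency Fourier argument applied to the full solution.

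\textbf{Step 2: nonlinear upper bound with extra decay.} The source terms in \eqref{eq-5.2} are in divergence form, $\mathbb{P}\,{\rm div}f_3$ and $\mathbb{P}\,{\rm div}(f_2-{\rm div} f_1)$. Therefore each low-frequency kernel gains a factor $|\xi|$. Redoing the computation in \eqref{eq5.35}, but putting one derivative on the kernel (as in \eqref{eq417}, and analogously for $\mathcal{B}$, $\mathcal{C}$), gives $(1+t-s)^{-\frac54-\frac k2}$ on $[0,t/2]$. Lemma~\ref{lem5.1} gives $\|f_3\|_{L^1\cap H^{k}}\lesssim \mathcal{W}\mathcal{V}(1+s)^{-\frac34}$, and $f_1,f_2$ carry the extra exponential factor. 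Integrating yields $\lesssim \mathcal{W}\mathcal{V}(1+t)^{-\frac34-\frac k2}$. On $[t/2,t]$ we use the upper bounds from Theorem~\ref{Thm1.3} as in \eqref{eq5.35}. Since $\mathcal{V}\lesssim\widebar{E}_0$ and $\mathcal{W}\le\delta$, we get
\[
\|\partial^k u_N(t)\|_{L^2}\le C\delta\widebar{E}_0(1+t)^{-\frac34-\frac k2}.
\]
This is small relative to $c\,m_0$ provided $\delta\widebar{E}_0\ll m_0\le\|u_0\|_{L^1}$. This matches the stated dependence of the constant on $\|u_0\|_{L^1}$ and $\widebar{E}_0$.

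\textbf{Main obstacle.} The main obstacle is this competition between the linear and nonlinear parts. The nonlinear bound has the same algebraic rate as the linear one, so the argument relies on the smallness $\delta$ relative to $|\int u_0|$, which must be quantified carefully. The difficult term is $u\otimes u$ in $f_3$, which has no exponential gain. If the comparison fails, my fallback is to work only at low frequencies. I would show that $\widehat{u}(t,\xi)=e^{-\mu|\xi|^2t}\big(\widehat{u_0}(\xi)+O(|\xi|\int_0^t\|f_3\|_{L^1}ds)\big)+\dots$. Since $\int_0^\infty\|f_3\|_{L^1}ds$ may be only logarithmically bounded, I would restrict to $|\xi|\le \epsilon(1+t)^{-1/2}$-type regions with $\epsilon$ small. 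This keeps $|\hat u|\ge m_0/2$ on a ball of radius $\sim(1+t)^{-1/2}$, which alone gives the rate $(1+t)^{-\frac34-\frac k2}$ from below.
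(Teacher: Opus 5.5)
Your main argument (Steps~1--2) has a genuine gap. In Step~2 you estimate the whole Duhamel part $\partial^k u_N$ in $L^2(\R^3)$ rather than only its low frequencies, and on $[t/2,t]$ you reuse \eqref{eq5.35}. That yields only the same rate $(1+t)^{-\frac34-\frac k2}$ as the heat part. The comparison therefore needs $C\delta\widebar{E}_0<c\,m_0$, where $c$ degenerates like $r^{\frac32+k}$ with the radius $r$ of the ball on which $|\widehat{u_0}|\ge m_0$. Nothing in the hypotheses gives this. The smallness behind Theorems~\ref{thm1.2}--\ref{Thm1.3} is a fixed threshold $E_0\le\delta_0$ depending only on the coefficients, whereas $|\int u_0|$ and $r$ may be arbitrarily small relative to $\widebar{E}_0$. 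So this is an additional assumption on the data, not a dependence of the constant in \eqref{eq5.26}. Your treatment of $t\le T_0$ in Step~1 is likewise only asserted.

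Your fallback is the right idea. Carried out, it is a complete proof with no extra smallness, so it should be the argument rather than a contingency. Take $\rho(t)=\epsilon(1+t)^{-1/2}$ with $\epsilon\le r$ small. On $|\xi|\le\rho(t)$ every factor $\mathcal{C}(t-s,\xi)$ lies in $[e^{-\mu\epsilon^2},1]$. So instead of factoring out $e^{-\mu|\xi|^2t}$ (not literally possible), bound each correction in absolute value:
\begin{itemize}
\item $|\mathcal{C}(t,\xi)\widehat{u_0}|\ge e^{-\mu\epsilon^2}m_0$;
\item $|\mathcal{B}(t,\xi)\widehat{\sigma_0}|\lesssim|\xi|\,\|Q_0\|_{L^1}$, since $|\mathcal{B}|\lesssim1$ near $\xi=0$;
\item the $f_1,f_2$ terms are $\lesssim|\xi|\int_0^\infty\|(f_1,f_2)(s)\|_{L^1}ds$, which is finite by the factor $e^{-\frac{a\Gamma s}{2}}$ in Lemma~\ref{lem5.1};
\item the $f_3$ term is at most $|\xi|\int_0^t\|f_3(s)\|_{L^1}ds\lesssim\epsilon(1+t)^{-\frac14}\sum_m\mathcal{W}^m\mathcal{V}$. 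Lemma~\ref{lem5.1} gives growth $(1+t)^{\frac14}$, not logarithmic, but the shrinking radius absorbs it.
\end{itemize}
Choosing $\epsilon$ in terms of $m_0$, $r$, $\widebar{E}_0$ gives $|\hat u(t,\xi)|\ge m_0/2$ on that ball. Hence $\|\partial^ku(t)\|_{L^2}^2\gtrsim m_0^2\epsilon^{3+2k}(1+t)^{-\frac32-k}$ for every $t\ge0$.

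This route differs from the paper's. The paper fixes $\mathcal{U}=B_r(0)$ and uses $(a+b)^2\ge\frac12a^2-b^2$. Via the divergence structure, it shows that the $\mathcal{B}\widehat{\sigma_0}$ and Duhamel contributions on $\mathcal{U}$ are $O((1+t)^{-\frac52-k})$ in squared norm, against $(1+t)^{-\frac32-k}$ from $\mathcal{C}\widehat{u_0}$. That settles large $t$ and leaves bounded times to the smallness of the data. Your pointwise shrinking-ball version is uniform in $t$ from the start and needs only $L^1$ bounds on the sources, not faster $L^2(\mathcal{U})$ decay of the remainder.

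One caveat applies equally to both arguments. If ${\rm div}\,u_0=0$ and $u_0\in L^1(\R^3)$, then $\xi\cdot\widehat{u_0}(\xi)\equiv0$, and continuity forces $\widehat{u_0}(0)=0$. So \eqref{eq1.20} is incompatible with a solenoidal $u_0$.
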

\begin{proof}
Recall that $u_0(x)\in L^1$, it follows from the Riemann-Lebesgue lemma that $\widehat{u_0}(\xi)\in C_c(\R^3)$. Together with the assumption \eqref{eq1.20} (which implies that $\widehat{u_0}(0)\ne 0$), we have
\begin{equation}
	\widehat{u_0}(\xi)\ne 0 \quad \text{for} \quad |\xi|\leq \epsilon,
\end{equation}
where $\epsilon$ is a fixed small positive constant.
Then, there exists a constant $r\in (0,\epsilon)$ such that we conclude that
\begin{equation}
	\inf_{\xi\in\, \mathcal{U}} |\widehat{u_0}(\xi)| \geq C_0>0,
\end{equation}
where $\mathcal{U}:=B_{r}(0) \subseteq \{ \xi\in \R^3\mid |\xi|\leq  \frac{R}{2}\}$
and $R$ is chosen to be the same as in Lemma~\ref{lem4.2}. Note that the constant $C_0$ depends only on $\|u_0\|_{L^1}$.
To derive the decay of $\partial^ku(t)$, we rewrite \eqref{eq-5.2} as
\[
u(t) =\check{\mathcal{C}}(t)\ast u_0 + \check{\mathcal{B}}(t)\ast \sigma_0  + \int_0^t  \check{\mathcal{B}}(t-s)\ast \mathbb{P}{\rm  div}\left( f_2(s)-{\rm div}f_1(s) \right) ds -\int_0^t  \check{\mathcal{C}}(t-s) \ast \mathbb{P}{\rm  div}f_3(s)  ds.
\]
Applying $\partial^k$ $(0\leq |k|\leq s-2)$ to the above equation, it follows that‌
	\begin{align}\label{eq5.29}
		\left\|\partial^k u(t)\right\|_{L^2(\R^3)}^2
		\geq& \frac{1}{2}\| |\xi|^k \mathcal{C}(t)\widehat{u_0} \|_{L^2(\mathcal{U})}^2 \nonumber\\
		&-\left\| |\xi|^k \mathcal{B}(t)\widehat{\sigma_0} + \int_0^t |\xi|^k \mathcal{B}(t-s)\left(|\xi| \widehat{f_2}(s)- |\xi|^2 \widehat{f_1}(s)  \right) - |\xi|^k\mathcal{C}(t-s)|\xi| \widehat{f_3}(s) ds\right\|_{L^2(\mathcal{U})}^2 \nonumber\\
		\geq& \frac{1}{2}\| |\xi|^k \mathcal{C}(t)\widehat{u_0} \|_{L^2(\mathcal{U})}^2- \| |\xi|^k \mathcal{B}(t)\widehat{\sigma_0} \|_{L^2(\mathcal{U})}^2\\
		&-\underbrace{\int_0^t\left\||\xi|^k \mathcal{B}(t-s)\left(|\xi| \widehat{f_2}(s)- |\xi|^2 \widehat{f_1}(s)  \right) + |\xi|^k\mathcal{C}(t-s)|\xi| \widehat{f_3}(s) \right\|_{L^2(\mathcal{U})}^2 ds}_{\Upsilon}.\nonumber
	\end{align}
where we have used the Plancherel theorem and the basic inequality $(a+b)^2\geq \frac{1}{2}a^2-b^2$.

	For the first term on the right-hand side of \eqref{eq5.29}, one can see that
    \begin{equation}\label{eq5.30}
    	\begin{split}
    		\| |\xi|^k \mathcal{C}(t,\xi)\widehat{u_0} \|_{L^2(\mathcal{U})}^2
    		= &  \int_{\mathcal{U}} |\xi|^{2k} |\mathcal{C}(t,\xi)|^2|\widehat{u_0}|^2 d\xi \\
    		\geq & \inf_{\xi\in\, \mathcal{U}} |\widehat{u_0}(\xi)|^2 \int_{|\xi|\leq r} |\xi|^{2k} e^{-2\mu|\xi|^2t} d\xi \\
    		\geq & C_0^2 \int_{|\varsigma|< r\sqrt{t}} t^{-\frac{3}{2}-k} |\varsigma|^{2k} e^{-2\mu|\varsigma|^2 } d\varsigma\\
    		\geq & C_1 (1+t)^{-\frac{3}{2}-k},
    	\end{split}
    \end{equation}
    for $t>0$. On the other hand,
    we only estimate the remainder terms for the case of $\mu>\Gamma$ and $a> 0$, as the proofs are analogous. Indeed, by a similar argument as in Lemma~\ref{lem4.2}, we have
\begin{equation}\label{eq5.31}
	\begin{split}
		\| |\xi|^k \mathcal{B}(t,\xi)\widehat{\sigma_0} \|_{L^2(\mathcal{U})}^2
		\lesssim &  \int_{|\xi|\leq r} |\xi|^{2k+2} |\mathcal{B}(t,\xi)|^2|\widehat{Q_0}|^2 d\xi \\
		\lesssim & \|\widehat{Q_0}\|_{L^\infty}^2\int_{|\xi|\leq r} |\xi|^{2(k+1)}e^{-2\min\{\mu,\Gamma\}|\xi|^2t} d\xi \\
		\leq & C (1+t)^{-\frac{5}{2}-k} \|Q_0\|_{L^1}^2,
	\end{split}
\end{equation}
and
    \begin{align}\label{eq5.32}
    	\Upsilon\lesssim&\int_0^t\left\||\xi|^k \mathcal{B}(t-s)\left(|\xi| \widehat{f_2}(s)- |\xi|^2 \widehat{f_1}(s)  \right) + |\xi|^k\mathcal{C}(t-s)|\xi| \widehat{f_3}(s) \right\|_{L^2(\mathcal{U})}^2 ds\nonumber\\
    	\lesssim &\int_0^{\frac{t}{2}} \left(\left\|\hat{f_2}(s,\xi)\right\|_{L^\infty}^2 + \left\|\hat{f_3}(s,\xi)\right\|_{L^\infty}^2\right) \int_{|\xi|\leq r} |\xi|^{2(k+1)} e^{-2c_{\mu,\Gamma} |\xi|^2 (t-s)} d\xi  ds\nonumber\\
    	& + \int_{\frac{t}{2}}^t \left(\left\||\xi|^{k+1}\hat{f_2}(s,\xi)\right\|_{L^\infty}^2 +\left\||\xi|^{k+1}\hat{f_3}(s,\xi)\right\|_{L^\infty}^2 \right) \int_{ |\xi|\leq r} e^{-2c_{\mu,\Gamma} |\xi|^2 (t-s)} d\xi  ds\nonumber\\
    	&+\int_0^{\frac{t}{2}} \left( \left\||\xi|\hat{f_1}(s,\xi)\right\|_{L^\infty}^2\right) \int_{|\xi|\leq r} |\xi|^{2(k+1)} e^{-2c_{\mu,\Gamma} |\xi|^2 (t-s)} d\xi  ds\nonumber\\
    	& + \int_{\frac{t}{2}}^t  \left\||\xi|^{k+1}\hat{f_1}(s,\xi)\right\|_{L^\infty}^2 \int_{|\xi|\leq r} |\xi|^2 e^{-2c_{\mu,\Gamma} |\xi|^2 (t-s)} d\xi  ds \\
    	\lesssim &\int_0^{\frac{t}{2}} (1+t-s)^{-\frac{5}{2}-k}  \left(\|\nabla f_1(s)\|_{L^1}^2 +\|f_2(s)\|_{L^1}^2 + \|f_3(s)\|_{L^1}^2\right) ds \nonumber\\
    	& + \int_{\frac{t}{2}}^t (1+t-s)^{-\frac{3}{2}}  \left(\|\partial^{k+1} f_1(s)\|_{L^1}^2 +\|\partial^{k+1}f_2(s)\|_{L^1}^2 + \|\partial^{k+1}f_3(s)\|_{L^1}^2\right) ds\nonumber \\
    	\lesssim& \sum_{m=1}^3 E_0^{m}\widebar{E}_0^2 \left(\int_0^{\frac{t}{2}} (1+t-s)^{-\frac{5}{2}-k} (1+s)^{-\frac{3}{2}}ds + \int_{\frac{t}{2}}^t (1+t-s)^{-\frac{3}{2}} (1+s)^{-\frac{5}{2}-k}ds \right)\nonumber\\
    	\leq& C\sum_{m=1}^3 E_0^{m}\widebar{E}_0^2 (1+t)^{-\frac{5}{2}-k},\nonumber
    \end{align}
    where we have also used Lemma~\ref{lem5.1}. Then, by substituting \eqref{eq5.30}-\eqref{eq5.32} into \eqref{eq5.29}, we arrive at
	\begin{equation}
		\left\|\partial^k u(t)\right\|_{L^2(\R^3)}^2 \geq \frac{C_1}{2} (1+t)^{-\frac{3}{2}-k} - C\sum_{m=0}^3 E_0^{m}\widebar{E}_0^2 (1+t)^{-\frac{5}{2}-k},
	\end{equation}
	for $t>0$ and $k\leq s-2$.
	This, together with the smallness assumption on initial data, then completes the proof of \eqref{eq5.26} regarding lower bounds for decay rates of the velocity field.
\end{proof}

\noindent\textbf{The proof of Theorem~\ref{Thm1.4}.}
It follows directly from Proposition~\ref{prop5.3} and Theorem~\ref{Thm1.3}.$\hfill\qedsymbol$



%

\section*{Acknowledgments}

The work of F. Yang was supported in part by NSFC-12031012, NSFC-12250710674, NSFC-W2531006 and the Institute of Modern Analysis-A Frontier Research Center of Shanghai and the Scientific Research Foundation for Advanced Talent of Nantong University under Grant 135424639079 and the Jiangsu Innovation and Entrepreneurship Talent Program under Grant JSSCBS20250352. The work of X. F. Yang was supported in part by National key research and development program of China (~2024YFA1013303~), National Natural Science Foundation of China (~No. 12271356~) and the Fundamental Research Funds for the Central Universities.


\begin{appendices}
	
	\section{Some basic theories and lemmas}\label{AP}
	In this section, we begin with some key inequalities that are used in this paper.
	First, the well-known Moser-type calculus inequalities are stated as follows (see \cite{ms-eq,majda_local} for details):
	\begin{lemma}\label{lem-a1}
		Let $f$, $g\in H^s$, we then have
		\begin{align}
			\|\partial^k\left(fg\right)\|_{L^2}\lesssim&  \|f\|_{L^\infty}\|\partial^k g\|_{L^2}  +  \|g\|_{L^\infty}\|\partial^k f\|_{L^2}, \\
			\|\partial^k\left(f g\right)\|_{L^1}\lesssim&  \|f\|_{L^2}\|\partial^k g\|_{L^2}  +  \|g\|_{L^2}\|\partial^k f\|_{L^2},\\
			\|[\partial^k,f] g\|_{L^2}\lesssim& \|\nabla f\|_{L^\infty}\|\partial^{k-1}g\|_{L^2}  +  \|g\|_{L^\infty}\|\partial^k f\|_{L^2},
		\end{align}
		for all $|k|\leq s$.
	\end{lemma}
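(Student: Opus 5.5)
The three inequalities in Lemma~\ref{lem-a1} are the classical Moser-type calculus estimates, and I would prove them in order by the Leibniz rule combined with the Gagliardo--Nirenberg interpolation inequality on $\R^3$. The standard tool is the interpolation estimate
\[
\|\partial^{j}f\|_{L^{2k/j}}\lesssim \|f\|_{L^\infty}^{1-\frac{j}{k}}\|\partial^k f\|_{L^2}^{\frac{j}{k}},\qquad 0\leq j\leq k,
\]
which holds for $f\in L^\infty\cap\dot H^k$.

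\emph{First inequality.} Expand $\partial^k(fg)=\sum_{\alpha\leq k}\binom{k}{\alpha}\partial^\alpha f\,\partial^{k-\alpha}g$.
\begin{itemize}
\item The endpoint terms $\alpha=0$ and $\alpha=k$ are bounded directly by $\|f\|_{L^\infty}\|\partial^k g\|_{L^2}$ and $\|g\|_{L^\infty}\|\partial^k f\|_{L^2}$.
\item For $0<|\alpha|=j<|k|$, apply H\"older with exponents $2k/j$ and $2k/(k-j)$, then the interpolation estimate to each factor. This gives the bound $\left(\|f\|_{L^\infty}\|\partial^kg\|_{L^2}\right)^{1-\theta}\left(\|g\|_{L^\infty}\|\partial^kf\|_{L^2}\right)^{\theta}$ with $\theta=j/k$.
\item Young's inequality then bounds this product by the sum of the two endpoint quantities.
\end{itemize}

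\emph{Commutator inequality.} Write $[\partial^k,f]g=\sum_{0<\alpha\leq k}\binom{k}{\alpha}\partial^\alpha f\,\partial^{k-\alpha}g$ and set $F=\nabla f$, so each term is $\partial^{\alpha-1}F\,\partial^{k-\alpha}g$, which has total order $k-1$.
\begin{itemize}
\item Interpolate $F$ between $L^\infty$ and $\dot H^{k-1}$, and $g$ between $L^\infty$ and $\dot H^{k-1}$, with the same H\"older and Young argument at level $k-1$.
\item This yields $\|\nabla f\|_{L^\infty}\|\partial^{k-1}g\|_{L^2}+\|g\|_{L^\infty}\|\partial^{k}f\|_{L^2}$, using $\|\partial^{k-1}F\|_{L^2}\leq\|\partial^kf\|_{L^2}$.
\end{itemize}

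\emph{$L^1$ inequality.} Take the Leibniz expansion again and apply Cauchy--Schwarz: $\|\partial^\alpha f\,\partial^{k-\alpha}g\|_{L^1}\leq\|\partial^\alpha f\|_{L^2}\|\partial^{k-\alpha}g\|_{L^2}$.
\begin{itemize}
\item Use the $L^2$ interpolation $\|\partial^jf\|_{L^2}\leq\|f\|_{L^2}^{1-j/k}\|\partial^kf\|_{L^2}^{j/k}$ on both factors.
\item The product becomes $\left(\|f\|_{L^2}\|\partial^kg\|_{L^2}\right)^{1-j/k}\left(\|g\|_{L^2}\|\partial^kf\|_{L^2}\right)^{j/k}$, and Young's inequality finishes it.
\end{itemize}

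All three steps are routine and standard (see \cite{ms-eq,majda_local}). The only delicate point I expect is making sure the Gagliardo--Nirenberg exponents are admissible in the intermediate cases. This concerns the $L^\infty$ endpoint in the first and third inequalities, and in particular the commutator's use of $\nabla f$ rather than $f$. It is handled by the cited references, so I would simply quote them there.
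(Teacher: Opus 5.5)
Your proof is correct and is the standard Leibniz--H\"older--Gagliardo--Nirenberg--Young argument from the classical references the paper cites for this lemma (the paper gives no proof of its own), including interpolating $\nabla f$ at level $|k|-1$ for the commutator. One point you should state explicitly, since your interpolation step already relies on it: $\|\partial^k f\|_{L^2}$ on the right-hand sides must mean the norm of all derivatives of order $|k|$, because for a single mixed multi-index such as $\partial_1\partial_2$ the first inequality fails (take $f=\phi(Nx_1)\psi(x_2)\eta(x_3)$ and $g=\psi(x_1)\phi(Nx_2)\eta(x_3)$; the left side grows like $N$ while the right side grows only like $N^{1/2}$).
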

	In addition, we include the following lemma for the convenience of the readers; see \cite{Liu1987}.
	\begin{lemma}\label{lem-a2}
		Let $\alpha$, $\beta$, $\gamma$ be positive constants. Then
		\begin{enumerate}[(i).]
			\item If $\alpha\leq \min\{\beta, \beta+\gamma-1\}$, $\gamma\ne 1$ or if $\alpha< \beta$, $\alpha \leq \beta+\gamma-1$, $\gamma= 1$, then
			\begin{equation}
				\int_0^{\frac{t}{2}} (1+t-s)^{-\beta}(1+s)^{-\gamma}ds \leq C(1+t)^{-\alpha},
			\end{equation}
			\item If $\alpha\leq \min\{\gamma, \beta+\gamma-1\}$, $\beta\ne 1$ or if $\alpha< \gamma$, $\alpha \leq \beta+\gamma-1$, $\beta= 1$, then
			\begin{equation}
				\int_{\frac{t}{2}}^t (1+t-s)^{-\beta}(1+s)^{-\gamma}ds \leq C(1+t)^{-\alpha}.
			\end{equation}
		\end{enumerate}
	\end{lemma}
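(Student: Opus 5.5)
This is the standard Liu--Kawashima time-convolution estimate, and I would prove it by splitting into the two ranges of $s$ and bounding each factor separately.

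\emph{Part (i).} For $s\in[0,t/2]$ we have $1+t-s\ge \tfrac12(1+t)$, so
\[
\int_0^{t/2}(1+t-s)^{-\beta}(1+s)^{-\gamma}ds\le 2^{\beta}(1+t)^{-\beta}\int_0^{t/2}(1+s)^{-\gamma}ds .
\]
The remaining integral is evaluated directly in three cases.
\begin{itemize}
\item If $\gamma>1$, it is bounded by a constant, so the whole expression is $\lesssim (1+t)^{-\beta}$.
\item If $\gamma<1$, it is $\lesssim (1+t)^{1-\gamma}$, so the whole expression is $\lesssim (1+t)^{-(\beta+\gamma-1)}$.
\item If $\gamma=1$, it equals $\ln(1+t/2)$, so the whole expression is $\lesssim (1+t)^{-\beta}\ln(2+t)$.
\end{itemize}
In the first two cases the exponent obtained is $\min\{\beta,\beta+\gamma-1\}$. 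Since $(1+t)^{-\theta}$ is decreasing in $\theta$, any $\alpha\le\min\{\beta,\beta+\gamma-1\}$ works. In the case $\gamma=1$ we have $\beta+\gamma-1=\beta$, so the hypothesis is $\alpha<\beta$. The logarithm is then absorbed into $(1+t)^{\beta-\alpha}$ using $\ln(2+t)\le C_\epsilon(1+t)^{\epsilon}$ with $\epsilon=\beta-\alpha$.

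\emph{Part (ii).} This is symmetric. For $s\in[t/2,t]$ we have $1+s\ge\tfrac12(1+t)$, so the integral is at most
\[
2^{\gamma}(1+t)^{-\gamma}\int_{t/2}^t(1+t-s)^{-\beta}ds=2^{\gamma}(1+t)^{-\gamma}\int_0^{t/2}(1+\tau)^{-\beta}d\tau ,
\]
after the substitution $\tau=t-s$. The same three-case computation, with the roles of $\beta$ and $\gamma$ exchanged, gives the bound $(1+t)^{-\alpha}$ under the stated conditions.

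\emph{Main obstacle.} No step is genuinely hard. The only care needed is the borderline exponent $1$, where the logarithmic factor forces the strict inequality $\alpha<\beta$ in (i) and $\alpha<\gamma$ in (ii). Constants depend only on $\alpha$, $\beta$ and $\gamma$, and small $t$ is harmless because everything is bounded for $t\le 1$.
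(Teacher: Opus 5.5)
Your proof is correct and complete. On each half-interval you bound the non-singular factor by its value at $\tfrac12(1+t)$ and integrate the other factor explicitly, absorbing the logarithm at exponent $1$ through the strict inequality; this is the standard argument for this estimate. The paper gives no proof of its own and only cites the lemma from the literature.
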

	Next, let us recall the well-known Gagliardo-Nirenberg inequality.
	\begin{lemma}\label{lem.GN}
		Let $1 \leq q, r \leq  \infty$ and $m$ is a natural number. Suppose that there is a real number $\alpha$ and a natural number $j<m$ are such that
		\[
		\frac{1}{p} = \frac{j}{d} + \left( \frac{1}{r} - \frac{m}{d} \right) \alpha + \frac{1 - \alpha}{q}
		\]
		and
		\[
		\frac{j}{m} \leq \alpha \leq 1.
		\]
		In particular, under the folowing two exceptional cases:
		\begin{enumerate}[(i).]
			\item  If $j = 0, mr < d$ and $q = \infty$, then it is necessary to make the additional assumption that either $u$ tends to zero at infinity or that $u$ lies in $L^s$ for some finite $s > 0$.
			\item If $1 < r <\infty$ and $m-j- \frac{d}{r}$ is a non-negative integer, then it is necessary to assume also that $\alpha\ne 1$.
		\end{enumerate}
		Then it holds that
		\begin{equation}
			\| \mathrm{D}^{j} u \|_{L^{p}} \leq C \| \mathrm{D}^{m} u \|_{L^{r}}^{\alpha} \| u \|_{L^{q}}^{1 - \alpha},
		\end{equation}
		where $C$ is a constant depending only on $d$, $m$, $j$, $q$, $r$ and $\alpha$.
	\end{lemma}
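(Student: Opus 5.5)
Since Lemma~\ref{lem.GN} is the classical Gagliardo--Nirenberg inequality, I will follow Nirenberg's original strategy and build the general case from two endpoint estimates joined by Hölder interpolation.

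\textbf{Scaling and the endpoint $\alpha=1$.} First, I check that the relation
\[
\frac1p=\frac jd+\Big(\frac1r-\frac md\Big)\alpha+\frac{1-\alpha}q
\]
is exactly the one forced by replacing $u(x)$ with $u(\lambda x)$. This means it suffices to prove the inequality for $u\in C_c^\infty(\R^d)$ and then pass to the closure; the decay hypothesis in exceptional case (i) is what justifies this density step. The endpoint $\alpha=1$ is the Sobolev-type estimate $\|\mathrm D^j u\|_{L^p}\lesssim\|\mathrm D^m u\|_{L^r}$ with $\frac1p=\frac jd+\frac1r-\frac md$. I obtain it by iterating the first-order Gagliardo--Nirenberg--Sobolev inequality
\[
\|v\|_{L^{d/(d-1)}}\le\prod_{i}\|\partial_i v\|_{L^1}^{1/d},
\]
which is proved by the one-dimensional fundamental theorem of calculus and the generalized Hölder inequality. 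Applying it to $|v|^\gamma$ yields the $L^r$ version for $r<d$. When $r>d$, or when $m-j-\frac dr$ is a nonnegative integer, the $L^\infty$/BMO endpoint fails. This is exactly where exceptional case (ii) excludes $\alpha=1$, so I only need the pure Sobolev endpoint in the non-exceptional range.

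\textbf{The interpolation inequality for $j=1$, $m=2$.} This is the heart of the proof. In one dimension, I integrate by parts:
\[
\int|u'|^p=-\int u\,\big(|u'|^{p-2}u'\big)'=-(p-1)\int u\,|u'|^{p-2}u'',
\]
and then apply Hölder's inequality to obtain
\[
\|u'\|_{L^p}^2\lesssim\|u\|_{L^q}\|u''\|_{L^r}\qquad\text{with }\frac2p=\frac1q+\frac1r.
\]
For $p<2$, where the power $|u'|^{p-2}$ is singular, I plan to use a regularization such as $(\epsilon+|u'|^2)^{(p-2)/2}$ instead. The $d$-dimensional version follows by applying the one-dimensional estimate in each coordinate direction and integrating over the remaining variables with Hölder's inequality. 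Iterating in $j$, i.e.\ applying the inequality to $\mathrm D^{j-1}u$ and using the log-convexity of $j\mapsto\|\mathrm D^j u\|$, gives the case $\alpha=j/m$ for general $j<m$, where
\[
\|\mathrm D^j u\|_{L^p}\lesssim\|\mathrm D^m u\|_{L^r}^{j/m}\|u\|_{L^q}^{1-j/m}.
\]

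\textbf{General $\alpha\in[j/m,1]$.} I combine the endpoint $\alpha=j/m$ with the Sobolev endpoint $\alpha=1$. Concretely, I interpolate by Hölder's inequality between $\|\mathrm D^j u\|_{L^{p_0}}$ and $\|\mathrm D^j u\|_{L^{p_1}}$, with $p_0,p_1$ chosen so that the scaling relation is reproduced. When the Sobolev endpoint is unavailable (exceptional case (ii)), I instead interpolate with an intermediate Sobolev estimate at a slightly smaller $\alpha'<1$, obtained by applying the $\alpha=1$ case at a lower regularity level.

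I expect the main obstacle to be this combination step: tracking exponents so that every intermediate Lebesgue index stays in $[1,\infty]$, and handling the exceptional endpoints, especially $p=\infty$ and $r=1$.
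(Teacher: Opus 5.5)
The paper does not prove this lemma; it quotes it as the classical Gagliardo--Nirenberg inequality. Your outline therefore has to stand on its own against Nirenberg's argument, and it has two genuine gaps.

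\textbf{First gap: the case $j=1$, $m=2$ with $p<2$.} Your integration by parts proves $\|u'\|_{L^p}^2\lesssim\|u\|_{L^q}\|u''\|_{L^r}$ only when $p\ge 2$, i.e.\ $\frac1q+\frac1r\le 1$.

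For $p<2$ the obstruction is not the singularity of $|u'|^{p-2}$ but the H\"older step. You would need $|u'|^{p-2}\in L^{p/(p-2)}$, an exponent that is negative, so $\int|u|\,|u'|^{p-2}|u''|$ cannot be controlled by $\|u'\|_{L^p}^{p-2}$. Replacing the weight by $(\epsilon+|u'|^2)^{(p-2)/2}$ only gives a bound of size $\epsilon^{(p-2)/2}\int|u|\,|u''|$, which blows up as $\epsilon\to 0$.

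Consequences:
\begin{itemize}
\item Already $q=r=1$ (so $p=1$) is not covered.
\item Your iteration in $j$ uses the chain $\frac{2}{p_k}=\frac1{p_{k-1}}+\frac1{p_{k+1}}$. It passes through exponents $p_k<2$ whenever $\frac1q+\frac1r>1$, so the whole case $\alpha=j/m$ inherits the gap.
\item The case $q=r=\infty$ is also open, since your constant $p-1$ degenerates there.
\end{itemize}

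The missing idea is Nirenberg's local estimate. On an interval $I$ of length $\lambda$,
\[
\|u'\|_{L^p(I)}\lesssim\lambda^{\frac1p-1-\frac1q}\|u\|_{L^q(I)}+\lambda^{\frac1p+1-\frac1r}\|u''\|_{L^r(I)}.
\]
Partition the support into intervals on which the two terms balance. This gives $\|u'\|_{L^p(I)}^2\lesssim\|u\|_{L^q(I)}\|u''\|_{L^r(I)}$ on each piece. Summing with H\"older's inequality (exponents $\frac{2q}{p}$ and $\frac{2r}{p}$) then gives the estimate for all $1\le q,r\le\infty$.

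\textbf{Second gap: the combination step for general $\alpha$.} Your $\alpha=1$ endpoint is an $L^{p_1}$ estimate only if $\frac1{p_1}=\frac1r-\frac{m-j}{d}\ge 0$, i.e.\ $(m-j)r\le d$. At $(m-j)r=d$ it holds only for $r=1$, and there $\|v\|_{L^\infty}\le\|\partial_1\cdots\partial_d v\|_{L^1}$ must be proved directly, since iterating the first-order inequality stops at $L^d$.

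So, contrary to what you claim, the endpoint is missing not only in exceptional case (ii). Whenever $(m-j)r>d$, the exponent relation at $\alpha=1$ gives $\frac1{p_1}<0$; in Nirenberg's formulation this is a H\"older seminorm, i.e.\ Morrey's inequality. There is then no second Lebesgue estimate to interpolate with. For $q=r=2$, everything your scheme produces (the $\alpha=j/m$ chain and iterated first-order Sobolev) lands in $L^p$ with $p<\infty$, so H\"older interpolation among these estimates never reaches $p=\infty$.

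This is exactly the regime the paper uses, e.g.\ $\|f\|_{L^\infty(\R^3)}\lesssim\|\partial^2 f\|_{L^2}^{3/4}\|f\|_{L^2}^{1/4}$ in \eqref{eq511}, with $d=3$, $j=0$, $m=2$, $q=r=2$, $\alpha=\frac34$.

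Your fallback of first applying Sobolev at a lower regularity level cannot help. Passing from $(m,r)$ to $(m',r')$ with $\frac1{r'}=\frac1r-\frac{m-m'}{d}$ leaves $\frac1r-\frac{m-j}{d}$ unchanged. The reduced problem therefore has the same degenerate endpoint, and this is true in case (ii) as well.

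A genuinely non-Lebesgue ingredient is needed: a Morrey-type H\"older bound together with a local argument interpolating $L^\infty$ between $L^q$ and $C^{0,\gamma}$. For the $L^2$-based instances the paper actually uses, the shortest route is to bound $\|f\|_{L^\infty}\lesssim\|\hat f\|_{L^1}$, split the integral at $|\xi|=R$, and optimize in $R$.
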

	Note that we also need the following Lemma regarding the cancellations rules of system \eqref{eq1.1}; see \cite{weak-ALC,Qtensor12,weak-strong-3d} and references therein.
	\begin{lemma}\label{lem.cancel}
		Let $u\in L_\sigma^2(\R^3)$ and $Q, G\in H^2(\R^3)\cap S_0^3$, and let
		\[
		D=\frac{1}{2}\left( \nabla u + \nabla u^{T} \right)\quad{\rm and}\quad\Omega=\frac{1}{2}\left( \nabla u - \nabla u^{T} \right).
		\]
		Then we have
		\begin{enumerate}[(1).]
			\item $(u\cdot\nabla u,u)= (u \cdot\nabla Q,Q)=0;$
			\item $(Q \Omega-\Omega Q,Q)=0;$
			\item $(u\cdot\nabla Q,\Delta Q)-\left( \nabla\cdot (\nabla Q \odot \nabla Q), u \right)=0 ;$
			\item $(G \Omega - \Omega G,\Delta Q) -\left( G \Delta Q - \Delta Q G, \nabla u \right) =0;$
			\item $(| G| \Delta Q, \nabla u)-(|G|D, \Delta Q)=0.$
		\end{enumerate}
	\end{lemma}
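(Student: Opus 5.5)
The plan is to verify each identity by a pointwise algebraic computation on the integrands, plus one integration by parts where needed. The only analytic inputs are $\nabla\cdot u=0$, the symmetry and tracelessness of $Q,G\in S_0^3$, and the cyclicity of the trace. All pairings are understood as $\langle A:B\rangle=\int{\rm Tr}(AB)\,dx$ for matrices and the usual $L^2$ product for vectors. To justify integrations by parts, I will first prove the identities for smooth, compactly supported $u$ (divergence free), $Q$ and $G$, and then pass to the stated classes by density. Each integrand is a product that is continuous in the relevant Sobolev norms, since $H^2\subset L^\infty$ in three dimensions. For (1) and (3), where $u\otimes u$ or $u\,\nabla Q\,\Delta Q$ must be integrable, I will assume in addition $\nabla u\in L^2$ (equivalently, interpret the statement for the regularity class actually used in Sections~\ref{Sec.2}--\ref{sec-3}).

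For (1), write $(u\cdot\nabla f,f)=\tfrac12\int u\cdot\nabla|f|^2\,dx=-\tfrac12\int(\nabla\cdot u)|f|^2\,dx=0$, applied with $f=u$ and with $f=Q$ componentwise. For (2), cyclicity gives ${\rm Tr}(Q\Omega Q)-{\rm Tr}(\Omega QQ)={\rm Tr}(\Omega Q^2)-{\rm Tr}(\Omega Q^2)=0$ pointwise. For (5), $|G|\Delta Q$ is symmetric, and the trace pairing of a symmetric matrix with $\nabla u$ only sees the symmetric part $D$. Hence ${\rm Tr}(|G|\Delta Q\,\nabla u)={\rm Tr}(|G|\Delta Q\,D)={\rm Tr}(|G|D\,\Delta Q)$ pointwise, and the identity holds without any integration.

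For (3), I will compute the divergence in the $\beta$-index:
\[
\partial_\beta(\partial_\alpha Q_{\gamma\delta}\partial_\beta Q_{\gamma\delta})=\partial_\alpha Q_{\gamma\delta}\,\Delta Q_{\gamma\delta}+\partial_\alpha\bigl(\tfrac12|\nabla Q|^2\bigr).
\]
Pairing with $u_\alpha$, the gradient term drops out by $\nabla\cdot u=0$ after integrating by parts. What remains is $\int u_\alpha\partial_\alpha Q_{\gamma\delta}\Delta Q_{\gamma\delta}=(u\cdot\nabla Q,\Delta Q)$. Because $\nabla Q\odot\nabla Q$ is symmetric, the choice of divergence index is immaterial.

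For (4), cyclicity gives ${\rm Tr}((G\Omega-\Omega G)\Delta Q)={\rm Tr}(\Omega(\Delta Q\,G-G\Delta Q))$. On the other side, $A:=G\Delta Q-\Delta Q\,G$ is antisymmetric, since $G$ and $\Delta Q$ are symmetric. So ${\rm Tr}(D A)=0$ and only the antisymmetric part of $\nabla u$ contributes. This leaves ${\rm Tr}(A\,\Omega^{T})=-{\rm Tr}(A\Omega)=-{\rm Tr}(\Omega A)={\rm Tr}(\Omega(\Delta Q\,G-G\Delta Q))$, which is the same expression as above, so the difference vanishes pointwise.

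The main obstacle is the sign and transpose bookkeeping in (3) and (4). The outcome depends on whether $(\nabla u)_{ij}=\partial_j u_i$ or $\partial_i u_j$, and on whether the pairing is ${\rm Tr}(AB)$ or ${\rm Tr}(AB^T)$. I will fix the convention used in \eqref{eq1.1} and in the energy computations of Proposition~\ref{prop2.1}, namely that multiplying $\nabla\cdot(\,\cdot\,)$ by $u$ and integrating by parts produces $-\langle\,\cdot:\nabla u\rangle$. I will then check that, under this convention, (4) and (5) are exactly the combinations that cancel in the sums $I_2+I_4$ and $I_3$ of that proof. If the opposite convention were used, the signs in (4) would flip consistently on both sides, so the cancellation itself is convention-independent.
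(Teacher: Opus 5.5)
\textbf{Assessment.} Your verification of (1), (2), (3) and (5) is correct. The paper gives no proof of this lemma and only refers to the literature, so a direct pointwise computation plus one integration by parts is the natural route. The problem is (4), exactly at the step you identified as the main obstacle.

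\textbf{The gap in (4).} You declare the pairing $\int{\rm Tr}(AB)\,dx$. In (4), however, you evaluate $(A,\nabla u)$, with $A=G\Delta Q-\Delta Q\,G$, as ${\rm Tr}(A\,\Omega^{T})$. That is ${\rm Tr}(A\,(\nabla u)^{T})=\sum_{ij}A_{ij}(\nabla u)_{ij}$, i.e.\ the Frobenius pairing.

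With the pairing you announced, ${\rm Tr}(A\nabla u)={\rm Tr}(A\Omega)={\rm Tr}(\Omega(G\Delta Q-\Delta Q\,G))$. The integrand of the left side of (4) then becomes $2\,{\rm Tr}(\Omega(\Delta Q\,G-G\Delta Q))$, which is not identically zero. Take a point where $G={\rm diag}(1,-1,0)$, $\Delta Q=e_1\otimes e_2+e_2\otimes e_1$ and $\nabla u=\Omega=e_1\otimes e_2-e_2\otimes e_1$:
\begin{itemize}
\item the first term equals $4$;
\item under ${\rm Tr}(AB)$ the second term equals $-4$, so the difference is $8$;
\item under the Frobenius pairing the second term is $+4$, so the difference is $0$.
\end{itemize}

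\textbf{Why the pairing matters.} Your closing claim that the cancellation is convention-independent is false, because two different convention changes behave differently:
\begin{itemize}
\item Replacing $\nabla u$ by $(\nabla u)^T$ sends $\Omega\mapsto-\Omega$ and fixes $D$. This does flip both terms of (4).
\item Switching the pairing between ${\rm Tr}(AB^T)$ and ${\rm Tr}(AB)$ leaves the first term unchanged, since $\Delta Q$ is symmetric. It flips only the second term, since $A$ is antisymmetric.
\end{itemize}
Hence (4), written with a minus sign, holds for exactly one of the two pairings.

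\textbf{The fix.} Fix from the outset the Frobenius pairing $(A,B)=\int A_{ij}B_{ij}\,dx=\int{\rm Tr}(AB^{T})\,dx$. It coincides with $\int{\rm Tr}(AB)\,dx$ whenever one factor is symmetric. So nothing changes in (1)--(3), (5), or the first term of (4), and your computation of (4) then goes through verbatim.

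This is also the pairing produced by the integration by parts in Proposition~\ref{prop2.1}. With $(\nabla u)_{ij}=\partial_j u_i$ and $(\nabla\cdot M)_i=\partial_jM_{ij}$, one has $\int u\cdot\nabla\cdot M\,dx=-\int M_{ij}\partial_ju_i\,dx=-(M,\nabla u)$. Taking $\Omega$ as the antisymmetric part of this same matrix, (4) is precisely the identity giving $I_2+I_4=0$. The index convention matters for that matching, not for the truth of the algebraic identity itself.

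\textbf{Minor point.} The pairings with $\nabla u$ in (4) and (5) also need $\nabla u\in L^2$ to be defined, not only those in (1) and (3).
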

	Finally, we close this section with a lemma for the estimate of $|Q|$ in the $H^s$ norm. In other words, we aim to obtain a result similar to that in \cite{active-limit}. To do this, we begin with a short review of the Littlewood-Paley theory; see \cite[Chapter~2]{Danchin-book} for details.
	\begin{prop}[Dyadic partition of unity]
		Let $\mathcal{C}$ be annulus $\{ \xi\in\R^d:\frac{3}{4}\leq |\xi|\leq \frac{8}{3} \}$. There exist radial functions $\chi$ and $\psi$, valued in interval $[0,1]$, belonging to $\mathcal{D}(B(0,\frac{4}{3}))$ and $\mathcal{D}(\mathcal{C})$, respectively, such that
		\[
		\begin{split}
			&\chi(\xi) +\sum_{j\geq 1}\psi(2^{-j}\xi)=1, \quad \forall \xi\in\R^d,\\
			&\sum_{j\in \mathbb{Z}}\psi(2^{-j}\xi)=1, \quad \forall \xi\in\R^d~\backslash\{0\},\\
			&|j-j'|\geq 2 \Longrightarrow \supp(\psi(2^{-j}\cdot))\cap \supp(\psi(2^{-j'}\cdot))=\emptyset,\\
			&j\geq 1 \Longrightarrow \supp(\chi)\cap\supp(\psi(2^{-j}\cdot))= \emptyset.
		\end{split}
		\]
	\end{prop}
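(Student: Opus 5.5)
The plan is the classical telescoping construction, as in \cite[Chapter~2]{Danchin-book}. First I fix a radial function $\chi\in\mathcal{D}(B(0,\frac{4}{3}))$ with values in $[0,1]$, equal to $1$ on $B(0,\frac{3}{4})$, and radially nonincreasing. For instance, take $\chi(\xi)=\theta(|\xi|)$ with $\theta$ a smooth nonincreasing profile that equals $1$ on $[0,\frac34]$ and vanishes on $[\frac43,\infty)$. Then I set
\[
\psi(\xi):=\chi\left(\tfrac{\xi}{2}\right)-\chi(\xi).
\]
The function $\psi$ is smooth and radial. It takes values in $[0,1]$: monotonicity of $\theta$ gives $\chi(\xi/2)\geq\chi(\xi)$, and both functions lie in $[0,1]$.

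\textbf{Support of $\psi$.} If $|\xi|\leq \frac34$, then both terms equal $1$, so $\psi(\xi)=0$. If $|\xi|\geq\frac83$, then both terms vanish. Hence $\supp\psi\subset\mathcal{C}=\{\frac34\leq|\xi|\leq\frac83\}$, and consequently $\supp\psi(2^{-j}\cdot)\subset 2^j\mathcal{C}$.

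\textbf{Disjointness of supports.} Suppose $j'\geq j+2$. Then the inner radius of $2^{j'}\mathcal{C}$ satisfies $2^{j'}\cdot\frac34\geq 2^j\cdot 3>2^j\cdot\frac83$, which exceeds the outer radius of $2^j\mathcal{C}$, so the two annuli are disjoint. Similarly, for $j\geq1$ the annulus $2^j\mathcal{C}$ lies in $\{|\xi|\geq\frac32\}$, which is disjoint from $\supp\chi\subset \bar B(0,\frac43)$. This gives the last two properties. In addition, every point of $\R^d$ (for the second identity, of $\R^d\backslash\{0\}$) meets at most finitely many of the supports, so all the sums below are locally finite.

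\textbf{The two identities.} Both follow by telescoping. For finite $J$,
\[
\chi(\xi)+\sum_{j=0}^{J}\psi(2^{-j}\xi)=\chi(2^{-J-1}\xi)\longrightarrow 1 \quad (J\to\infty),
\]
because $\chi=1$ near the origin. Likewise,
\[
\sum_{j=-J}^{J}\psi(2^{-j}\xi)=\chi(2^{-J-1}\xi)-\chi(2^{J}\xi)\longrightarrow 1-0 \quad\text{for }\xi\neq0,
\]
because $\chi$ has compact support.

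\textbf{Main obstacle: indexing of the first identity.} The only genuine difficulty is bookkeeping. The first identity must be read with the sum starting at the $\psi(\xi)$ term, i.e.\ over $j\geq0$, as in \cite{Danchin-book}. One cannot simply drop that term and still keep the disjointness property for $j\geq1$. Indeed, $\chi$ vanishes for $|\xi|\geq\frac43$, while for $j\geq1$ the functions $\psi(2^{-j}\cdot)$ vanish for $|\xi|<\frac32$. So if the sum started at $j=1$, the left-hand side would be $0$ on $\frac43<|\xi|<\frac32$. I would therefore state and prove the first identity in the $j\geq0$ form, and keep the disjointness statement in its $j\geq1$ form, which is exactly what the construction above yields.
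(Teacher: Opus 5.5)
Your proof is correct. The paper gives no proof of this proposition; it is quoted from \cite[Chapter~2]{Danchin-book}. Your telescoping construction $\psi=\chi(\cdot/2)-\chi$, with $\chi$ radially nonincreasing, therefore stands on its own.

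It differs from the normalization argument often given in textbooks. There one takes a bump $\theta$ supported in $\mathcal{C}$ and equal to $1$ near a slightly smaller annulus, sets $\psi=\theta/\sum_{j\in\mathbb{Z}}\theta(2^{-j}\cdot)$, and then defines $\chi=1-\sum_{j\geq0}\psi(2^{-j}\cdot)$. In your route both identities are exact finite telescoping sums, and $\psi\in[0,1]$ follows from monotonicity alone. The normalization route gets the homogeneous identity directly, but has to recover the support and range of $\chi$ afterwards.

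Your treatment of the indexing is also right. The printed $\sum_{j\geq1}$ is a misprint for $\sum_{j\geq0}$. As you observe, the printed identity fails for any $\chi,\psi$ with the stated supports, since every term vanishes on $\frac43\leq|\xi|<\frac32$. This does not affect the paper, which only uses the homogeneous decomposition in Lemma~\ref{lem-A6}.

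One cosmetic point concerns the case where $B(0,\frac43)$ denotes the open ball. Then let $\theta$ vanish on $[\frac43-\delta,\infty)$ for some small $\delta>0$ (keeping $\frac43-\delta>\frac34$), so that $\supp\chi$ is a compact subset of that ball. None of your support or disjointness arguments change, since they only use $\frac32>\frac43$ and $2^{j}\cdot 3>2^{j}\cdot\frac83$.
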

	Then for all $j\in \mathbb{Z}$, the homogeneous dyadic blocks $\Delta_j$ and the homogeneous low-frequency cut-off operators $S_j$ are defined by
	\[
	\begin{split}
		\dot{\Delta}_ju=&\mathcal{F}^{-1}( \psi(2^{-j}\xi)\mathcal{F}u ) = 2^{jd}\int_{\R^d}h(2^jy)u(x-y)dy,\\
		\dot{S}_ju=&\mathcal{F}^{-1}( \chi(2^{-j}\xi)\mathcal{F}u ) = 2^{jd}\int_{\R^d}\tilde{h}(2^jy)u(x-y)dy,
	\end{split}
	\]
	where
	\[
	h:=\mathcal{F}^{-1}\psi \quad{\rm and}\quad \tilde{h}:=\mathcal{F}^{-1}\chi.
	\]
	For the homogeneous Besov space $\dot{B}_{p,q}^s$, the associated norm is defined by
	\[
	\|u\|_{\dot{B}_{p,q}^s}:= \left( \sum_{j\in \mathbb{Z}} 2^{qjs}\| \dot{\Delta}_j u \|_{L^p}^q   \right)^{\frac{1}{q}}
	\]
	Moreover, we need to introduce the following Bernstein-type lemma that we use extensively.
	\begin{lemma}
		Let $0<r<R$, $1\leq p\leq q\leq \infty$, and $k\in \mathbb{N}$. For any function $u\in L^p$ and $\lambda>0$, it holds that
		\[
		\begin{split}
			& \supp(\mathcal{F}u)\subset\{ \xi\in\R^d\mid |\xi|\leq\lambda R \} \Longrightarrow \|\partial^k u\|_{L^q} \lesssim \lambda^{k+d\left( \frac{1}{p}- \frac{1}{q} \right)}\|u\|_{L^p},\\
			& \supp(\mathcal{F}u)\subset\{ \xi\in\R^d\mid \lambda r\leq |\xi|\leq\lambda R \} \Longrightarrow \|\partial^k u\|_{L^p} \sim \lambda^k\|u\|_{L^p}.
		\end{split}
		\]
	\end{lemma}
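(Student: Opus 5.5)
The plan is the standard Littlewood--Paley scaling argument. First reduce to $\lambda=1$ by dilation: if $u_\lambda(x):=u(x/\lambda)$, then $\supp\mathcal{F}u_\lambda$ is the support of $\mathcal{F}u$ rescaled by $1/\lambda$, and
\[
\|\partial^k u\|_{L^q}=\lambda^{k}\,\lambda^{d/q}\,\|\partial^k u_\lambda\|_{L^q},\qquad \|u\|_{L^p}=\lambda^{d/p}\|u_\lambda\|_{L^p}.
\]
Hence the powers $\lambda^{k+d(\frac1p-\frac1q)}$ and $\lambda^k$ come out automatically. Equivalently, I will keep $\lambda$ and track the scaling of the convolution kernels directly; this is what I do below.

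\textbf{First implication.} Fix $\phi\in\mathcal{D}(\R^d)$ with $\phi\equiv1$ on $B(0,R)$, and set $h:=\mathcal{F}^{-1}\phi$, which is a Schwartz function. Since $\supp\mathcal{F}u\subset B(0,\lambda R)$, we have $\mathcal{F}u=\phi(\lambda^{-1}\xi)\mathcal{F}u$. Therefore
\[
\partial^k u = \big(\partial^k h_\lambda\big)\ast u,\qquad h_\lambda(x):=\lambda^d h(\lambda x),
\]
so that $\partial^k h_\lambda(x)=\lambda^{k+d}(\partial^k h)(\lambda x)$. Choose $r\in[1,\infty]$ with $1+\frac1q=\frac1r+\frac1p$; this is possible because $p\le q$. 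Young's inequality then gives
\[
\|\partial^k u\|_{L^q}\le \|\partial^k h_\lambda\|_{L^r}\|u\|_{L^p}=\lambda^{k+d(1-\frac1r)}\|\partial^k h\|_{L^r}\|u\|_{L^p},
\]
and $1-\frac1r=\frac1p-\frac1q$. The constant $\|\partial^k h\|_{L^r}$ is finite and depends only on $R$, $k$, $d$, $r$.

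\textbf{Second implication.} The upper bound $\|\partial^k u\|_{L^p}\lesssim\lambda^k\|u\|_{L^p}$ is the case $q=p$ of the first implication, since the annulus lies in the ball of radius $\lambda R$. The lower bound is the step I expect to need the most care: I have to recover $u$ from its $k$-th derivatives by a bounded multiplier. Take $\theta\in\mathcal{D}(\R^d\setminus\{0\})$ with $\theta\equiv1$ on $\{r\le|\xi|\le R\}$. Using the multinomial identity $|\xi|^{2k}=\sum_{|\alpha|=k}\frac{k!}{\alpha!}\xi^{2\alpha}$, on the support of $\mathcal{F}u$ one has
\[
\mathcal{F}u=\sum_{|\alpha|=k}\frac{k!}{\alpha!}\,\frac{(-i\xi)^\alpha\,\theta(\lambda^{-1}\xi)}{|\xi|^{2k}}\,(i\xi)^\alpha\mathcal{F}u .
\]
Put $g_\alpha(\xi):=\frac{(-i\xi)^\alpha\theta(\xi)}{|\xi|^{2k}}$. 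Because $\theta$ vanishes near the origin, each $g_\alpha$ belongs to $\mathcal{D}(\R^d)$, so $\mathcal{F}^{-1}g_\alpha\in L^1$. The multiplier appearing above is $g_\alpha(\lambda^{-1}\xi)\,\lambda^{-k}$, whose kernel is $\lambda^{-k}(\mathcal{F}^{-1}g_\alpha)_\lambda$ and has $L^1$ norm $\lambda^{-k}\|\mathcal{F}^{-1}g_\alpha\|_{L^1}$. Writing $u=\sum_\alpha \frac{k!}{\alpha!}\,\lambda^{-k}(\mathcal{F}^{-1}g_\alpha)_\lambda\ast\partial^\alpha u$ and applying Young's inequality with $L^1$ kernels gives
\[
\|u\|_{L^p}\lesssim\lambda^{-k}\sup_{|\alpha|=k}\|\partial^\alpha u\|_{L^p}.
\]
Combining this with the upper bound yields $\|\partial^k u\|_{L^p}\sim\lambda^k\|u\|_{L^p}$, with constants depending only on $r$, $R$, $k$, $d$.
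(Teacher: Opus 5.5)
Your proof is correct and is the standard argument from the book of Bahouri, Chemin and Danchin; the paper does not prove this lemma itself but quotes it from that reference, citing it just before the Littlewood--Paley review. Like that argument, you use a smooth cutoff equal to $1$ on the rescaled ball together with Young's inequality for the upper bound, and the identity $|\xi|^{2k}=\sum_{|\alpha|=k}\frac{k!}{\alpha!}\xi^{2\alpha}$ with the compactly supported multipliers $g_\alpha$ (which have $L^1$ kernels) for the reverse inequality on the annulus.

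There are two harmless slips, neither of which affects the argument you actually run. First, in your unused dilation remark the $d$-exponents have the wrong sign: with $u_\lambda(x)=u(x/\lambda)$ one has $\|\partial^k u\|_{L^q}=\lambda^{k-d/q}\|\partial^k u_\lambda\|_{L^q}$ and $\|u\|_{L^p}=\lambda^{-d/p}\|u_\lambda\|_{L^p}$. Second, you reuse the letter $r$ for the Young exponent, although it already denotes the inner radius of the annulus.
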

	Now, we are ready to state our key Lemma as follows:
	\begin{lemma}\label{lem-A6}
		Let $s\geq 1$ be an integer. If $Q\in H^s(\R^d)$, then $|Q|:=\sqrt{\sum_{\alpha,\beta=1}^d Q_{\alpha\beta}^2}\in H^s(\R^d)$ and
		\begin{equation}
			\| |Q| \|_{H^s} \leq C \| Q \|_{H^s},
		\end{equation}
		where $C$ is a positive constant depending only on $s$ and $d$.
	\end{lemma}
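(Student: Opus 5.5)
We prove Lemma~\ref{lem-A6} by splitting $\||Q|\|_{H^s}$ into its $L^2$ part and its top homogeneous part, and using the Besov characterization of $\dot{H}^s$ only where it helps. The $L^2$ part is immediate: $\||Q|\|_{L^2}=\|Q\|_{L^2}$ pointwise. The map $F:\R^{d\times d}\to\R$, $F(M)=|M|$, is Lipschitz with constant $1$ but is not smooth at $M=0$, so the usual Moser composition estimate $\|F(Q)\|_{H^s}\lesssim C(\|Q\|_{L^\infty})\|Q\|_{H^s}$ does not apply directly. We therefore need an argument that uses only Lipschitz-type information at order one and handles higher orders separately.

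For $s=1$ we work pointwise. Since $F$ is $1$-Lipschitz, $|Q|\in W^{1,1}_{loc}$ and $|\nabla|Q||\le|\nabla Q|$ almost everywhere. This follows from the chain rule for Lipschitz functions, or equivalently by approximating with $F_\varepsilon(M)=\sqrt{|M|^2+\varepsilon^2}-\varepsilon$, whose gradient $\nabla F_\varepsilon(Q)=Q\cdot\nabla Q/\sqrt{|Q|^2+\varepsilon^2}$ is bounded by $|\nabla Q|$, and letting $\varepsilon\to0$. This gives $\||Q|\|_{H^1}\le\|Q\|_{H^1}$.

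A difference-quotient characterization of Besov norms extends this a little further. For $0<\theta<1$,
\[
\|f\|_{\dot B^{1+\theta}_{2,2}}\sim\Big(\int_{\R^d}|h|^{-d-2\theta}\|\tau_h\nabla f-\nabla f\|_{L^2}^2\,dh\Big)^{1/2}.
\]
This alone cannot pass beyond first derivatives, because $\nabla|Q|=\frac{Q}{|Q|}\nabla Q$ contains the factor $Q/|Q|$, which is discontinuous. The plan for $s\ge2$ is to differentiate away from the zero set and bound the resulting terms directly. On $\{Q\neq0\}$, $\partial^k|Q|$ is a sum of terms of the form
\[
|Q|^{1-m}\,\prod_{i=1}^m\partial^{\beta_i}Q\,\cdot(\text{bounded homogeneous factors in }Q/|Q|),
\]
with $\sum|\beta_i|=k$. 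In the terms that actually appear, at least $m-1$ factors pair up as $Q\cdot\partial^\beta Q$, and each such pair combined with $|Q|^{-1}$ yields a bounded coefficient. The real singular contributions are terms like $|\nabla Q|^2/|Q|$ (appearing at $k=2$), and these are not controlled pointwise.

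This is the main obstacle, and it may in fact be fatal. In one dimension, $|x|$ near a simple zero of $Q$ has a second derivative that is a delta mass, so $|Q|\notin H^2$ in general. An honest proof therefore cannot work for generic zero sets. The fallback I would try first is the matrix structure. When $Q\in S_0^3$ vanishes transversally, its zero set has codimension $5$ (the dimension of $S_0^3$), while we work in $\R^3$. So generic $Q$ has no zeros, and near a degenerate zero the singularity $|\nabla Q|^2/|Q|$ behaves like $|x|^{-1}\cdot|x|^{2(\cdot)}$, which is locally $L^2$ in $\R^3$. I would split $\partial^k|Q|$ by the dyadic decomposition $\dot\Delta_j$. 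Low frequencies are bounded trivially through the $L^2$ norm. For high frequencies I would use paradifferential linearization $|Q|=T_{Q/|Q|}Q+R$. The obstruction is again that $Q/|Q|\notin L^\infty\cap\dot H^{s}$. Since the term to be controlled is $\|\dot\Delta_j|Q|\|_{L^2}$, I expect this step to require an additional hypothesis, such as $|Q|\ge c>0$ or working with $|Q|^2$, in order to close. If it does not close, I would restrict the lemma to $s=1$ or to fractional $s<3/2$, via the Besov difference characterization at order $\theta<1/2$ applied to $\nabla|Q|$, and flag this as a gap in the statement as given.
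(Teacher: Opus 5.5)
Your conclusion is the right one, and you should state it as a fact, not a risk. For every integer $s\ge2$ the lemma is false, even for $S_0^3$-valued $Q$ on $\R^3$, so no proof of it can exist.

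Let $\chi\in C_c^\infty(\R^3)$ with $\chi\ge0$ and $\chi\equiv1$ near the origin, and set $Q(x)=x_1\chi(x)\,\mathrm{diag}(1,-1,0)$. Then $Q\in H^s$ for all $s$. However, $|Q|=\sqrt2\,|x_1|\chi(x)$, and $\partial_1^2|Q|$ is an $L^2$ function plus the nonzero measure $2\sqrt2\,\chi(0,x_2,x_3)\,\delta(x_1)$ carried by the plane $\{x_1=0\}$. Hence $|Q|\notin H^2$.

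Your fallback paragraph should be discarded, because genericity of the zero set cannot rescue an inequality claimed for all $Q$ with a uniform constant. Take $E=\tfrac{1}{\sqrt6}\mathrm{diag}(1,1,-2)$, which is orthogonal to $\mathrm{diag}(1,-1,0)$, and $Q_\eta=\chi(x)\big(x_1\mathrm{diag}(1,-1,0)+\eta E\big)$. These tensors have smooth modulus $|Q_\eta|=\chi\sqrt{2x_1^2+\eta^2}$ and uniformly bounded $H^s$ norms. Yet near the origin $\partial_1^2|Q_\eta|=2\eta^2(2x_1^2+\eta^2)^{-3/2}$, whose $L^2$ norm is $\gtrsim\eta^{-1/2}$. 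Also, a hypothesis such as $|Q|\ge c>0$ is vacuous for $Q\in L^2(\R^d)$.

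What survives is exactly your $s=1$ argument: $|\nabla F_\varepsilon(Q)|\le|\nabla Q|$, $F_\varepsilon(Q)\to|Q|$ in $L^2$, and weak compactness. It is complete and gives $\||Q|\|_{H^1}\le\|Q\|_{H^1}$. Your remark about fractional $1<s<3/2$ is consistent with the example, which only excludes $s\ge3/2$. The difference-quotient route you sketch is not yet an argument, though, for the reason you give yourself: $\nabla|Q|$ carries the discontinuous factor $Q/|Q|$.

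The paper's proof starts from the same regularization as your $F_\varepsilon$ (its $\mathcal{Q}$, with $|\partial\mathcal{Q}/\partial Q_{\alpha\beta}|<1$) and then works block by block in Littlewood--Paley frequencies. It breaks at the step
\[
\Big\|h_j\ast\Big(\frac{\partial\mathcal{Q}}{\partial Q_{\alpha\beta}}\,\nabla Q_{\alpha\beta}\Big)\Big\|_{L^2}\le C\,\big\|h_j\ast\nabla Q_{\alpha\beta}\big\|_{L^2},
\]
which is deduced from the pointwise bound on the coefficient alone. A pointwise bound on a multiplier does not pass through the sign-changing, frequency-localizing kernel $h_j$. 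If $\nabla Q_{\alpha\beta}$ has Fourier support in a single annulus, the right-hand side vanishes for every other $j$, while the product generally has all frequencies. Summed against $2^{2j(s-1)}$, the same inference would make multiplication by any function of modulus at most $1$ bounded on $\dot H^{s-1}$. That is false for $s\ge2$: take $\mathrm{sgn}(x_1)$, which up to a constant factor is the limiting coefficient in the example.

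Consistently, in that example $\partial_1^2\mathcal{Q}$ contains $\varepsilon^2|\partial_1Q|^2(|Q|^2+\varepsilon^2)^{-3/2}$, so $\|\mathcal{Q}\|_{\dot H^2}\gtrsim\varepsilon^{-1/2}$. The uniform-in-$\varepsilon$ bound needed before the Fatou step therefore fails. The paper uses this lemma to control $\|\partial^k|Q|\|_{L^2}$ for $k$ up to $s\ge2$ (e.g.\ in the estimates of $K_9$ and $L_9$--$L_{14}$), so those bounds are not supported by it.
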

	\begin{proof}
		Similarly to the proof of \cite[Lemma~A.1]{active-limit},
		we denote
		\[
		\mathcal{Q} := \sqrt{\sum_{\alpha=1}^d\sum_{\beta=1}^d Q_{\alpha\beta}^2 +\epsilon^2} -\epsilon.
		\]
		Obviously, for any $\epsilon>0$, we have $\mathcal{Q}\geq 0$ and
		\begin{equation}\label{eq-a8}
		\left|	\dfrac{\partial \mathcal{Q}}{\partial Q_{\alpha\beta}}\right|=\left| \dfrac{Q_{\alpha\beta}}{\sqrt{\sum_{\alpha=1}^d\sum_{\beta=1}^d Q_{\alpha\beta}^2 +\epsilon^2}}\right|<1,\quad \alpha,\beta\in \{ 1,2,\cdots,d \}.
		\end{equation}
		Since the norms $\|\cdot\|_{\dot{H}^s}$ and $\|\cdot\|_{\dot{B}_{2,2}^s}$ are equivalent.
		Together with the Bernstein-type inequalities, one has
		\begin{align*}
			\|\mathcal{Q}\|_{\dot{H}^s}^2\leq C \|\mathcal{Q}\|_{\dot{B}_{2,2}^s}^2=& C\sum_{j\in \mathbb{Z}} 2^{2js} \|\dot{\Delta}_j \mathcal{Q}\|_{L^2}^2
			\leq   C\sum_{j\in \mathbb{Z}} 2^{2j(s-1)} \|\nabla \dot{\Delta}_j \mathcal{Q}\|_{L^2}^2\\
			\leq & C\sum_{j\in \mathbb{Z}}\sum_{\alpha=1}^d\sum_{\beta=1}^d 2^{2j(s-1)} \left\|  h_j \ast \left(\dfrac{\partial \mathcal{Q}}{\partial Q_{\alpha\beta}} \nabla Q_{\alpha,\beta} \right)\right\|_{L^2}^2\\
			= & C\sum_{j\in \mathbb{Z}} \sum_{\alpha=1}^d\sum_{\beta=1}^d 2^{2j(s-1)} \int_{\R^d} \left|\int_{\R^d} h_j (x-y) \dfrac{\partial \mathcal{Q}}{\partial Q_{\alpha\beta}}(y)\nabla Q_{\alpha,\beta}(y)  dy\right|^2 dx\\
			\leq  & C\sum_{j\in \mathbb{Z}} \sum_{\alpha=1}^d\sum_{\beta=1}^d 2^{2j(s-1)} \int_{\R^d} \left|\int_{\R^d} h_j (x-y) \nabla Q_{\alpha,\beta}(y)  dy\right|^2 dx\\
			\leq & C \sum_{j\in \mathbb{Z}} \sum_{\alpha=1}^d\sum_{\beta=1}^d 2^{2js} \left\| h_j \ast  Q_{\alpha,\beta}\right\|_{L^2}^2 \\
			\leq & C \sum_{\alpha=1}^d\sum_{\beta=1}^d \|Q_{\alpha\beta}\|_{\dot{H}^s}^2.
		\end{align*}
		On the other hand, one can see that
		\begin{equation}
			\mathcal{Q} = \sqrt{\sum_{\alpha=1}^d\sum_{\beta=1}^d Q_{\alpha\beta}^2 +\epsilon^2} -\epsilon = \dfrac{\sum_{\alpha=1}^d\sum_{\beta=1}^d Q_{\alpha\beta}^2}{\sqrt{\sum_{\alpha=1}^d\sum_{\beta=1}^d Q_{\alpha\beta}^2 +\epsilon^2} + \epsilon} < \sum_{\alpha=1}^d\sum_{\beta=1}^d |Q_{\alpha\beta}|.
		\end{equation}
		Combining these results, we obtain that
		\begin{equation}
			\|\mathcal{Q}\|_{H^s}\leq C \|Q\|_{H^s}, \quad \forall \epsilon >0.
		\end{equation}
		Moreover, by the Fatou's Lemma and $\mathcal{Q}\rightarrow |Q|$ as $\epsilon\rightarrow 0$, it holds that
		\begin{equation}
			\||Q|\|_{H^s}\leq C \|Q\|_{H^s}.
		\end{equation}
		This completes the proof of Lemma~\ref{lem-A6}.	
	\end{proof}
	
\end{appendices}

%

\end{document}